\documentclass[11pt]{article}
\usepackage[english]{babel}
\usepackage{amsmath,amssymb,amsfonts,amsthm,mathrsfs,bm}
\usepackage{mathtools}
\usepackage{graphicx,xcolor}
\usepackage{enumerate,geometry,marginnote,afterpage}
\usepackage{fancyhdr,courier,type1cm,dsfont,float,tikz}
\usepackage{hyperref}
\usepackage[nameinlink,noabbrev,capitalise]{cleveref}
 \crefname{theorem}{Theorem}{Theorems}
 \crefname{prop}{Proposition}{Propositions}
 \crefname{lemma}{Lemma}{Lemmas}
 \crefname{remark}{Remark}{Remarks}
 \crefname{example}{Example}{Examples}
 \crefname{corollary}{Corollary}{Corollaries}

\AddToHook{env/corollary/begin}{\crefalias{theorem}{corollary}}
\AddToHook{env/lemma/begin}{\crefalias{theorem}{lemma}}
\AddToHook{env/prop/begin}{\crefalias{theorem}{prop}}
\AddToHook{env/definition/begin}{\crefalias{theorem}{definition}}
\AddToHook{env/assumption/begin}{\crefalias{theorem}{assumption}}
\AddToHook{env/remark/begin}{\crefalias{theorem}{remark}}
\AddToHook{env/example/begin}{\crefalias{theorem}{example}}
\AddToHook{env/conjecture/begin}{\crefalias{theorem}{conjecture}}
\AddToHook{cmd/appendix/before}{\crefalias{section}{appendix}}

\crefname{definition}{Definition}{Definitions}
\Crefname{definition}{Definition}{Definitions}
\crefname{assumption}{Assumption}{Assumptions} 
\Crefname{assumption}{Assumption}{Assumptions}
\crefname{conjecture}{Conjecture}{Conjectures}
\Crefname{conjecture}{Conjecture}{Conjectures}
\crefname{appendix}{Appendix}{Appendices}

\usetikzlibrary{arrows, positioning}

\usepackage{enumitem}
\usepackage{graphicx} 
\usepackage{tikz}
\usepackage{longtable}
\usepackage{microtype}
\usepackage{ulem}
\usepackage[comma,sort, numbers]{natbib}

\definecolor{vry}{RGB}{253, 231, 37}

\definecolor{vrg}{RGB}{94,201,98}

\definecolor{vrdg}{RGB}{33, 145, 140}

\definecolor{vrb}{RGB}{59,82,139}

\definecolor{vrp}{RGB}{68,1,84}
 
\definecolor{vro}{RGB}{249,142,9}

\definecolor{vrr}{RGB}{188,55,84}

\definecolor{vrnb}{RGB}{13,8,135}

\definecolor{vrw}{RGB}{240,240,240}

\definecolor{vrbg}{RGB}{252, 244, 182}

\definecolor{vrdr}{RGB}{159, 42, 99}

\hypersetup{colorlinks=true, citecolor=vrr, linkcolor=vrb, urlcolor=vrb}

\renewcommand{\le}{\leqslant}
\renewcommand{\ge}{\geqslant}
\renewcommand{\leq}{\leqslant}
\renewcommand{\geq}{\geqslant}

\newcommand{\E}[1]{\mathbb{E}\left[#1\right]}
\newcommand{\Var}[1]{\mathrm{Var}\left[#1\right]}
\newcommand{\Cov}[2]{\mathrm{Cov}\left[#1,#2\right]}
\newcommand{\R}{\mathbb{R}}
\newcommand{\N}{\mathbb{N}}
\newcommand{\Gn}{G_n}

\newcommand{\sign}{\sigma_n}
\newcommand{\Wass}{\mathrm{Wass}}
\newcommand{\Ber}{\mathrm{Ber}}

\newcommand{\NH}[1]{N({#1},\Gn)}
\newcommand{\NHK}[1]{N({#1},\mathcal G_K(\mathcal Z_n))}

\usepackage{textcase} 
\usepackage{filemod} 
\usepackage{titlesec} 
\usepackage{etoolbox} 
\usepackage{booktabs} 
\usepackage[linewidth=1pt]{mdframed} 
\usepackage{comment} 

\newtheorem{theorem}{Theorem}[section]
\newtheorem{lemma}[theorem]{Lemma}
\newtheorem{corollary}[theorem]{Corollary}
\newtheorem{prop}[theorem]{Proposition}
\newtheorem{remark}[theorem]{Remark}

\newtheorem{definition}[theorem]{Definition}
\newtheorem{example}[theorem]{Example}
\numberwithin{equation}{section}
\theoremstyle{definition}

\tikzset{
    motif/.style={baseline=-0.5ex, scale=0.8},
    node/.style={draw=black!55, circle, fill=black!55, inner sep=0pt, minimum size=2.2pt},
    edge/.style={line width=0.55pt, draw=black!55}
}

\DeclareRobustCommand{\Ktwo}{%
\tikz[motif]{
    \node[node] (1) at (0,0) {};
    \node[node] (2) at (0.4,0) {};
    \draw[edge] (1) -- (2);
}}

\DeclareRobustCommand{\Pthree}{%
\tikz[motif]{
    \node[node] (1) at (0,0.2) {};
    \node[node] (2) at (0.2,-0.1) {};
    \node[node] (3) at (0.4,0.2) {};
    \draw[edge] (1) -- (2) -- (3);
}}

\DeclareRobustCommand{\Konethree}{%
\tikz[motif]{
    \node[node] (center) at (0,0) {};
    \node[node] (top) at (0,0.25) {};
    \node[node] (left) at (-0.2,-0.15) {};
    \node[node] (right) at (0.2,-0.15) {};
    \draw[edge] (center) -- (top);
    \draw[edge] (center) -- (left);
    \draw[edge] (center) -- (right);
}}

\DeclareRobustCommand{\Kthree}{%
\tikz[motif]{
    \node[node] (1) at (-0.2,-0.15) {};
    \node[node] (2) at (0.2,-0.15) {};
    \node[node] (3) at (0,0.2) {};
    \draw[edge] (1) -- (2) -- (3) -- (1);
}}

\DeclareRobustCommand{\Pfour}{%
\tikz[motif]{
    \node[node] (1) at (-0.3,0.15) {};
    \node[node] (2) at (-0.1,-0.15) {};
    \node[node] (3) at (0.1,0.15) {};
    \node[node] (4) at (0.3,-0.15) {};
    \draw[edge] (1) -- (2) -- (3) -- (4);
}}

\DeclareRobustCommand{\Pfive}{%
\tikz[motif]{
    \node[node] (1) at (-0.4,0.15) {};
    \node[node] (2) at (-0.2,-0.15) {};
    \node[node] (3) at (0,0.15) {};
    \node[node] (4) at (0.2,-0.15) {};
    \node[node] (5) at (0.4,0.15) {};
    \draw[edge] (1) -- (2) -- (3) -- (4) -- (5);
}}

\DeclareRobustCommand{\Konefour}{%
\tikz[motif]{
    \node[node] (center) at (0,0) {};
    \node[node] (tl) at (-0.2,0.2) {};
    \node[node] (tr) at (0.2,0.2) {};
    \node[node] (bl) at (-0.2,-0.2) {};
    \node[node] (br) at (0.2,-0.2) {};
    \draw[edge] (center) -- (tl);
    \draw[edge] (center) -- (tr);
    \draw[edge] (center) -- (bl);
    \draw[edge] (center) -- (br);
}}

\DeclareRobustCommand{\Cfour}{%
\tikz[motif]{
    \node[node] (1) at (-0.15,-0.15) {};
    \node[node] (2) at (0.15,-0.15) {};
    \node[node] (3) at (0.15,0.15) {};
    \node[node] (4) at (-0.15,0.15) {};
    \draw[edge] (1) -- (2) -- (3) -- (4) -- (1);
}}

\DeclareRobustCommand{\Paw}{%
\tikz[motif]{
    \node[node] (1) at (-0.2,0.15) {};
    \node[node] (2) at (-0.2,-0.15) {};
    \node[node] (3) at (0,0) {};
    \node[node] (4) at (0.25,0) {};
    \draw[edge] (1) -- (2) -- (3) -- (1);
    \draw[edge] (3) -- (4);
}}

\DeclareRobustCommand{\Tthreetwo}{%
\tikz[motif]{
    \node[node] (tl) at (-0.2,0.2) {};
    \node[node] (bl) at (-0.2,-0.2) {};
    \node[node] (center) at (0,0) {};
    \node[node] (mid) at (0.25,0) {};
    \node[node] (right) at (0.5,0) {};
    \draw[edge] (tl) -- (center) -- (bl);
    \draw[edge] (center) -- (mid) -- (right);
}}

\title{  Third and Fourth Moment Phenomena for Monochromatic Edges and Applications to Graph-Based Two-Sample Tests }

\author{Siva Athreya \and Bhaswar B.~Bhattacharya \and Yogesh Dahiya \and Karambir Das \and
Spandan Ghoshal \and Sourish Maniyar \and Elizabeth Sara Roy}

\date{}

\begin{document} 
\maketitle

\begin{abstract}  
Given a graph sequence $\{\Gn\}_{n\in\N}$, denote by $T_n$ the number of monochromatic edges when the vertices of $G_n$ are independently colored red or blue with probabilities $p$ and $1-p$, respectively. We characterize the asymptotic normality of $T_n$, after centering and scaling to unit variance, in terms of subgraph (motif) counts and standardized moments. One of the highlights of our results is a {\it third moment phenomenon}: for every fixed $p\in(0,1)\backslash\{\frac12\}$, asymptotic normality holds if and only if the standardized third moment converges to zero. The more familiar {\it fourth moment condition}, which requires the standardized fourth moment to converge to $3$, gives an equivalent characterization except at the two exceptional probabilities $p=\frac{1\pm1/\sqrt3}{2}$. At these values, the standardized fourth moment converges to $3$ for every graph sequence with a diverging number of edges, even when Gaussian convergence fails. In contrast, in the balanced case $p=\frac12$, the fourth moment condition characterizes asymptotic normality, while the third moment condition is insufficient. These results identify three distinct regimes for moment characterizations of Gaussian convergence, determined by the coloring probability. We further show that the third moment condition remains sufficient in the sparse regime, where  $p=p_n \to 0$. As an application, we derive the limiting null distribution of two-sample statistics based on undirected $K$-nearest-neighbor graphs in the Poissonized setting, identifying sharp growth conditions on $K$ for asymptotic normality.  
\end{abstract}

\small  

\noindent \textbf{Keywords:} Random graph colourings; Stein's method; moment phenomenon; nonparametric two-sample tests; nearest-neighbour graphs.  
  
\medskip
\noindent \textbf{MSC 2020:} 60F05; 62G10; 62G20; 05C80; 05C15.

\section{Introduction}

Let $\{\Gn\}_{n\in\N}$ be a sequence of simple, undirected graphs with
vertex set $[n] := \{1,\ldots,n\}$ and edge set $E(\Gn)$. Suppose the
vertices of $\Gn$ are independently colored red with probability $p
\in (0,1)$ and blue with probability $1-p$. Let $T_n$ be the number of
monochromatic edges in $E(\Gn)$ and $U_n$ be its standardized version.
Despite being elementary the statistic $U_n$ exhibits a rich range of limiting
behaviors and arises in a wide range of problems across combinatorial probability and statistical inference.  In the balanced (uniform) case $p=\frac12$, it can be expressed as a second-order Rademacher chaos, that is, a quadratic form in independent Rademacher variables (see \eqref{eq:Tnquadratic}).  This is a prototypical example of a degenerate $U$-statistic of order two
\cite{dependenceindependence,jansongaussian,ustatistics}, which arises in many contexts, such as the Hamiltonian of Ising models \cite{bmpotts,chatterjee}, tests of independence in autoregressive models \cite{beran_dependence}, and Ramsey theory \cite{anitconcentrationramsey,localclt}. The unbalanced (biased) case $p\ne\frac12$ also arises naturally in nonparametric two-sample
testing based on geometric graphs (see \cref{sec:knn} for details)
and in motif estimation problems \cite{kw2018,subgraph}.

Since the statistic $U_n$ is a sum of locally dependent indicators
(see \eqref{eq:Un}), sufficient conditions for its asymptotic
normality can be obtained using standard versions of Stein's method or
other normal approximation techniques. Identifying necessary and
sufficient conditions, however, is considerably more delicate. For
graph sequences with $|E(G_n)|\to\infty$, asymptotic normality in the
balanced case $p=\frac12$ is characterized by a {\it fourth moment
  phenomenon} \cite{BDM2017,rademacher,quadratic}:
$$U_n \stackrel{D}{\rightarrow} N(0,1) \quad\text{if and only if}\quad
\E{U_n^4} \rightarrow 3.$$
The unbalanced case, despite its practical relevance, has received considerably less attention.

In this paper, we develop a general framework for characterizing the
asymptotic normality of $U_n$ for every fixed $p\in(0,1)$ and
providing sufficient conditions in the regime when $p=p_n \rightarrow
0$. Our results reveal a new {\it third moment phenomenon}: for every
fixed $p\ne\frac12$, asymptotic normality is equivalent to
$\E{U_n^3}\to0$. A particularly striking feature is the emergence of
the two exceptional probabilities $p=\frac{1\pm1/\sqrt3}{2}$. At these
values, the fourth moment converges to its Gaussian value even when
asymptotic normality fails (see \cref{example:fourth}), whereas the
third moment continues to characterize Gaussian convergence. Outside
these exceptional values, the fourth moment condition
also characterizes asymptotic normality (see \cref{thm:Un}). At $p=\frac{1}{2}$, (see \cref{example:third}), 
we provide an example to show that the third moment condition is
insufficient. In the sparse regime, (see
\cref{ppn:thirdfourthsparse}, \cref{ppn:clthalf_sparse},
\cref{example:thirdsparse} and \cref{example:sparsenormal}), where $p=p_n\to0$, as $n\to\infty$ we
show that the third moment condition is sufficient for asymptotic
normality of $U_n$. Interestingly, the third moment condition is
strictly weaker than the fourth moment condition in this regime,
although neither is necessary for asymptotic normality. In addition for the fixed $p$ regime we provide
necessary and sufficient conditions in terms of motif counts for the
third and fourth moment phenomena to hold (see \cref{thm:Un}). We
establish a similar result for the sparse regime, prescribing a sufficient
decay rate of $p_n$ in terms of motif counts
(see \cref{lem:fourthmomentsparse}) .

Next, using the above results we derive the asymptotic null
distribution of two-sample tests based on undirected
$K$-nearest-neighbor ($K$-NN) graphs in the Poissonized
setting. Allowing $K$ to grow with the sample size has been often
recommended to improve power for graph-based nonparametric tests
\cite{ChenFriedman2017,generalgraph}. Our results provide sharp
general growth conditions that justify normal calibration in this
regime. Specifically, we show that under the null hypothesis, the test
statistic is asymptotically normal for arbitrary $K$ when the limiting
sample proportions are unequal, and for $K=o(n)$ when the sample sizes
are equal (see \cref{thm:cltH0} for precise statement). The same framework also gives conditions for asymptotic
normality of two-sample edge-count statistics based on general
similarity graphs. 

The rest of the introduction is organized as follows. In \cref{sec:Un}
we define the model and statistics precisely and state our main
results on the third and fourth moment phenomena for monochromatic
edges. We then show asymptotic properties of graph based two-sample
tests in \cref{sec:knn}. We conclude in \cref{sec:proofoutline} by
discussing the novelty and significance of our main results in
relation to the existing literature and outlining the proofs of these
results.

\subsection{Third and Fourth Moment Phenomena for Monochromatic Edges}

\label{sec:Un}   

Let $\{\Gn\}_{n\in\N}$ be a sequence of simple, undirected graphs with vertex set $[n] := \{1,\ldots,n\}$ and edge set $E(\Gn)$. Let $ d_{G_n}(i)$ will be the degree of the $i$-th vertex of $G_n$, for $1 \leq i \leq n$. Suppose the vertices of $\Gn$ are independently colored red with probability $p \in (0,1)$ and blue with probability $1-p$. For $1\le i\le n$, denote by $X_i$ the color of the vertex $i$, with $X_i =1$ for red and $X_i=0$ for blue. Our object of interest in this paper is the number of monochromatic edges in $\Gn$, that is, 
\begin{equation}\label{eq:Tn}
    T_n := \sum_{\{i,j\}\in E(\Gn)} \bm{1} \{X_i = X_j \}. 
\end{equation}
Denote the centered and rescaled version of $T_n$ by: 
\begin{equation} \label{eq:Un}
    U_n := \frac{T_n - \E{T_n}}{\sqrt{\Var{T_n}}}  .  
\end{equation}

Throughout, for a finite graph $H$ (a {\it motif}), we will denote by $\NH{H}$ the number of copies of $H$ in $\Gn$. We will use pictograms for the motifs, so for instance $\NH{\Ktwo}$ will denote the number of edges in $G_n$ and $\NH{\Cfour}$ will denote the number of $4$-cycles on $G_n$. Recall that $X_i$ is the color of the vertex $i$, for $1\le i\le n$. Hence, 
\begin{equation*}
   \E{\bm{1}\{X_i=X_j\}} = p^2+(1-p)^2 = \tfrac{1}{2}(1+u)  ,  
\end{equation*}
where $u := (2p-1)^2 \in [0,1)$. Then, from \eqref{eq:Tn}, 
$$\E{T_n} = \tfrac{1}{2}(1+u) \NH{\Ktwo} .$$
Further, since the indicators of a pair of edges being monochromatic are independent when the edges are disjoint, a direct computation gives,  
\begin{equation}\label{eq:var_term}
    \Var{T_n} = \frac{1-u^2}{4} \NH{\Ktwo} + \frac{u-u^2}{2} \NH{\Pthree}  . 
\end{equation}
Note that the second term in \eqref{eq:var_term} vanishes when $u=0$, that is, $p = \frac{1}{2}$. Consequently, the leading order of the variance changes depending on whether or not $p = \frac{1}{2} $.  This dichotomy is reflected throughout our results, with the balanced case often exhibiting different asymptotic behavior.

For two positive sequences $\{a_n\}_{n\in\N}$ and $\{b_n\}_{n\in\N}$ we write $a_n=o(b_n)$ if $a_n/b_n\to0$, $a_n=O(b_n)$ if $a_n/b_n$ is bounded, and $a_n\asymp b_n$ if both $a_n=O(b_n)$ and $b_n=O(a_n)$. All asymptotics are taken as $n\to\infty$.  We separate our results on the asymptotic normality of $U_n$ in two regimes: the fixed $p$ regime, in which $p\in(0,1)$ is a fixed constant, and the sparse regime, in which $p=p_n\to0$, as $n\to\infty$.
 
\paragraph{The Fixed $p$ Regime:} The following theorem characterizes the asymptotic normality of $U_n$ in terms of its third and fourth moments and the growth rates of certain motif counts, when the coloring probability $p\in(0,1)$ is fixed.. 

\begin{theorem}\label{thm:Un}
Let $\{\Gn\}_{n\in\N}$ be a sequence of graphs such that $\NH{\Ktwo} \to \infty$ and $U_n$ be as defined in \eqref{eq:Un}. Suppose $ p \in (0, 1)$ is fixed. Then the following hold:   
\begin{enumerate}
    
\item[$(i)$] If $p \in (0,1) \backslash \{\frac{1}{2}, \frac{1\pm 1/\sqrt{3}}{2} \}$, then the following are equivalent:
\begin{itemize}
\item[$(a)$] $U_n \stackrel{D} \rightarrow N(0, 1)$. 
\item[$(b)$] $\NH{\Konefour} = o((\NH{\Ktwo}+\NH{\Pthree})^2)$.
\item[$(c)$] $\E{U_n^4} \to 3$.
\item[$(d)$] $\E{U_n^3} \to 0$.  
\end{itemize}

\item[$(ii)$] If $p = \frac{1\pm 1/\sqrt{3}}{2}$, then the following are equivalent:
\begin{itemize}
\item[$(a)$] $U_n \stackrel{D} \rightarrow N(0, 1)$. 
\item[$(b)$] $\NH{\Konefour} = o((\NH{\Ktwo}+\NH{\Pthree})^2)$.  
\item[$(c)$] $\E{U_n^3} \to 0$. 
\end{itemize}

\item[$(iii)$] If $p = \frac{1}{2}$, then the following are equivalent:
\begin{itemize}
\item[$(a)$] $U_n \stackrel{D} \rightarrow N(0, 1)$. 
\item[$(b)$] $\NH{\Cfour} = o(\NH{\Ktwo}^2)$.
\item[$(c)$] $\E{U_n^4} \to 3$. 
\end{itemize}

\end{enumerate}

\end{theorem}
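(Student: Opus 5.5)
Write $Y_i := X_i - p$ and $c := 1-2p$. Then $\bm 1\{X_i=X_j\} - \tfrac12(1+u) = c(Y_i+Y_j) + 2Y_iY_j$, so
\begin{equation*}
T_n - \E{T_n} = c\sum_{i} d_{G_n}(i)\, Y_i + 2\sum_{\{i,j\}\in E(\Gn)} Y_iY_j =: L_n + Q_n .
\end{equation*}
The plan is to view $U_n$ as a sum of a first chaos $L_n$ and a second chaos $Q_n$ in the independent centered variables $Y_i$. These two pieces are orthogonal. We have $\Var{L_n} = c^2 p(1-p)\sum_i d_{G_n}(i)^2 \asymp u(\NH{\Ktwo} + \NH{\Pthree})$, and $\Var{Q_n} = 4p^2(1-p)^2\NH{\Ktwo}$, which together recover \eqref{eq:var_term}.

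\textbf{Case $p\ne\frac12$.} First we prove (b)$\Rightarrow$(a). Note $\sum_i d_{G_n}(i)^4 \asymp \NH{\Konefour} + \NH{\Pthree} + \NH{\Ktwo} + \dots$. So (b) gives $\max_i d_{G_n}(i)^2 = o(\sum_i d_{G_n}(i)^2)$, which is exactly the Lindeberg condition for $L_n/\sqrt{\Var{L_n}}$. For $Q_n$ I would use a fourth-moment or contraction bound for degenerate quadratic forms (de Jong). The required contraction quantities are $\NH{\Cfour}$ and $\sum_i d_{G_n}(i)^2$, and these must be $o(\NH{\Ktwo}^2)$ only when $\Var{Q_n}$ is non-negligible relative to $\Var{T_n}$. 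Since $\Var{L_n}$ dominates unless $\NH{\Pthree} = O(\NH{\Ktwo})$, I would simply apply a Stein's method bound for the locally dependent sum, or equivalently a Wasserstein bound for $L_n+Q_n$ jointly. This bound is controlled by $\sum_i d_{G_n}(i)^4$ and $\NH{\Cfour}$ divided by $\Var{T_n}^2$. We have $\NH{\Cfour} \le \sum_{i,j}\mathrm{codeg}(i,j)^2 \lesssim \sum_i d_{G_n}(i)^4$, so (b) suffices.

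Next, (a)$\Rightarrow$(c),(d) and the converses. I would compute $\E{U_n^3}$ and $\E{U_n^4}$ explicitly via motif expansions. The dominant terms should be
\begin{equation*}
\E{U_n^3} \approx \kappa_3(p)\,\frac{\sum_i d_{G_n}(i)^3}{\Var{T_n}^{3/2}} + (\text{cross terms}), \qquad
\E{U_n^4} - 3 \approx \kappa_4(p)\,\frac{\sum_i d_{G_n}(i)^4}{\Var{T_n}^2} + (\text{nonnegative terms}).
\end{equation*}
Here $\kappa_3(p) \propto c^3\,\mathbb E[Y^3] \propto (1-2p)^4 p(1-p) \ne 0$, and $\kappa_4(p) \propto c^4 \big(\mathbb E[Y^4] - 3(\mathbb E[Y^2])^2\big) \propto 1 - 6p(1-p)$. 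The latter vanishes exactly at $p = \frac{1\pm1/\sqrt3}{2}$, which explains part (ii).

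With these expansions, (c)$\Rightarrow$(b) and (d)$\Rightarrow$(b) follow once the remaining cross terms are shown to have a definite sign or to be lower order. For the third moment I would use $\sum_i d_{G_n}(i)^3 \ge (\sum_i d_{G_n}(i)^4)^{1/2}\cdot(\cdot)$-type Cauchy–Schwarz comparisons to show $\E{U_n^3}\to0$ forces $\max_i d_{G_n}(i) = o(\sqrt{\Var{T_n}})$, and hence (b). For (a)$\Rightarrow$(c),(d) I would use uniform integrability: hypercontractivity of chaos of order $\le 2$ in bounded variables bounds $\E{U_n^6}$, so moments converge.

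\textbf{Case $p=\frac12$.} Here $L_n=0$ and $U_n$ is a Rademacher quadratic form, so (a)$\Leftrightarrow$(c)$\Leftrightarrow$(b) is the known fourth moment theorem \cite{BDM2017,rademacher}, with $\E{U_n^4}-3 \asymp \NH{\Cfour}/\NH{\Ktwo}^2$.

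The main obstacle is the exact motif expansion of $\E{U_n^3}$ and $\E{U_n^4}$ for $p\ne\frac12$. The difficulty is controlling mixed terms (triangles, paws, $\Cfour$, $\Tthreetwo$) and showing they are either dominated by $\NH{\Konefour}$ or carry the correct sign. This is what yields the equivalences (c)$\Leftrightarrow$(b) and (d)$\Leftrightarrow$(b). I would first attempt a cumulant-based approach, showing the third and fourth cumulants of $L_n$ dominate, with the $Q_n$ contributions bounded by $\NH{\Cfour}$ and codegree sums.
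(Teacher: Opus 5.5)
Your architecture matches the paper's. You expand $\E{S_n^3}$ and $\E{S_n^4}-3\sigma_n^4$ and isolate the star contributions. You use Stein's method for (b)$\Rightarrow$(a), uniform integrability for (a)$\Rightarrow$(c),(d), and the literature at $p=\frac12$.

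Several of your ingredients are correct and worth keeping:
\begin{itemize}
\item Reading the leading terms as $\E{L_n^3}$ and the fourth cumulant $\kappa_4(L_n)$ of the linear chaos is correct and illuminating. Expanding $\sum_i d_{G_n}(i)^3$ and $\sum_i d_{G_n}(i)^4$ in star counts reproduces exactly the paper's coefficients: $-\frac{3u^2(1-u)}{2}$ for $\NH{\Konethree}$ and $3u^2(1-u)(3u-1)$ for $\NH{\Konefour}$.
\item Hypercontractivity for degree-two polynomials in $p$-biased bits is a valid substitute for \cref{lem:momentUn}.
\item Write $\delta_2:=\sum_i d_{G_n}(i)^2$. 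Since $\binom{\max_i d_{G_n}(i)}{4}\le \NH{\Konefour}\le \frac1{24}\max_i d_{G_n}(i)^2\,\delta_2$, condition (b) is equivalent to $\max_i d_{G_n}(i)^2=o(\delta_2)$.
\end{itemize}
Consequently the implications between (b), (c) and (d) are immediate once you know one thing: for every graph sequence, everything except $\E{L_n^3}$ (resp.\ $\kappa_4(L_n)$) is $o(\sigma_n^3)$ (resp.\ $o(\sigma_n^4)$). That is the substance of the proof, and you leave it as ``the main obstacle''. The ``definite sign'' option you offer for closing it does not work.

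\textbf{The third moment.} Your linear coefficient has the wrong sign. Since $\bm 1\{X_i=X_j\}=1-X_i-X_j+2X_iX_j$, the coefficient is $2p-1$, not $1-2p$. With the correct sign, $\E{L_n^3}=-(1-2p)^4p(1-p)\sum_i d_{G_n}(i)^3<0$. The cross term, however, is $3\E{L_n^2Q_n}=12(2p-1)^2p^2(1-p)^2\sum_{\{k,l\}\in E(\Gn)}d_{G_n}(k)d_{G_n}(l)\ge0$; its $\Pfour$ part is the entry $\frac{3u(1-u)^2}{4}$ of \cref{tab:third}. The signs are opposite, so cancellation must be ruled out by size. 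The naive bound $\max_k d_{G_n}(k)\,\delta_2\le\delta_2^{3/2}$ is of the same order as $\sigma_n^3$ and does not suffice. The missing idea is the truncation
$\sum_{\{k,l\}\in E(\Gn)}d_{G_n}(k)d_{G_n}(l)\le t\delta_2+\frac12\big(\sum_{i:d_{G_n}(i)>t}d_{G_n}(i)\big)^2\le\frac32\delta_2^{4/3}$ with $t=\delta_2^{1/3}$, which is \eqref{eq:NHGnpath}.

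\textbf{The fourth moment.} Here $\kappa_4(L_n)\propto 3u-1$, which is negative when $|2p-1|<1/\sqrt3$. A nonnegative but non-negligible remainder would then allow $\E{U_n^4}\to3$ without (b). Moreover, the remainder is not termwise nonnegative: the $\Kthree$, $\Tthreetwo$ and $\Paw$ coefficients in \cref{tab:fourth} are negative. What you must show is that $\kappa_4(Q_n)$ and the mixed cumulants $\kappa(L_n,L_n,L_n,Q_n)$, $\kappa(L_n,L_n,Q_n,Q_n)$, $\kappa(L_n,Q_n,Q_n,Q_n)$ are $o(\delta_2^2)$ unconditionally. They reduce to sums such as $\sum_k d_{G_n}(k)^2\sum_{l\sim k}d_{G_n}(l)$, $\sum_l\big(\sum_{k\sim l}d_{G_n}(k)\big)^2$ and $\NH{\Cfour}\lesssim\sum_i d_{G_n}(i)^3$. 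Each is $O(\delta_2^{7/4})$ by Cauchy--Schwarz and $\max_i d_{G_n}(i)\le\delta_2^{1/2}$. In the paper this is done by \eqref{eq:smallmotifs}, the bound $\NH{\Pfive}+\NH{\Tthreetwo}=O(\NH{\Pthree}^{7/4})$, and the exact cancellation of the disconnected motifs against $3\sigma_n^4$.

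\textbf{Sufficiency, (b)$\Rightarrow$(a).} Separate CLTs for $L_n$ and $Q_n$ do not combine, because they are only uncorrelated, not independent. A Stein bound stated in terms of the maximal dependency-neighbourhood size also fails under (b), for example for $K_{1,m}$ plus $M$ disjoint edges with $m=M^{2/5}$. You need the refined split into $A_1$ and $A_2$ of \cref{sec:prepw}. For that split, a crude count of connected edge quadruples does give $\Var{V_n}\lesssim\sum_i d_{G_n}(i)^4$, which confirms your claim.
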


 The theorem identifies three distinct regimes for the asymptotic
 normality of $U_n$, distinguished by the exceptional probabilities
 $p=\frac{1\pm1/\sqrt{3}}{2}$ and the balanced case
 $p=\frac{1}{2}$. Detailed remarks on the three
 regimes, on the novelty of  the third and fourth moment phenomena and an
 outline of the proofs are given in \cref{sec:proofoutline}. As remarked
 earlier for the two exceptional probabilities
 $p=\frac{1\pm1/\sqrt3}{2}$ in \cref{example:fourth}, we construct a
 graph sequence, ($n$-star $K_{1,n}$), for which the fourth moment
 converges to its Gaussian value even when asymptotic normality fails
 and for $p=\frac{1}{2}$, in \cref{example:third} we consider a graph
 sequence, (the complete bipartite $K_{2,n}$), for which the third moment
 condition is insufficient. The proof of \cref{thm:Un} is given in
 \cref{sec:Unpf}.

\paragraph{The Sparse $p$ Regime:} Next, we consider the sparse regime $p_n\to0$. In this regime the third moment condition turns out to be sufficient for the asymptotic normality of $U_n$.

\begin{theorem}\label{ppn:clthalf_sparse}
  Let $\{\Gn\}_{n\in\N}$ be a sequence of graphs such that $\NH{\Ktwo} \to \infty$ and $U_n$ be as defined in \eqref{eq:Un}. Suppose $ p=p_n \to 0$, as $ n \rightarrow \infty$. Then 
$$\E{U_n^3} \to 0  \implies U_n \stackrel{D} \rightarrow N(0, 1).$$
\end{theorem}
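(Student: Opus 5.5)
The plan is to reduce to the centered coordinates $Y_i := X_i - p$ and split $T_n-\E{T_n}$ into a linear part and a degenerate quadratic part. Since $\bm 1\{X_i=X_j\} = 1 - X_i - X_j + 2X_iX_j$, we get
\begin{equation*}
T_n - \E{T_n} = L_n + Q_n, \qquad L_n := (2p-1)\sum_{i=1}^n d_{G_n}(i)\, Y_i, \qquad Q_n := 2\sum_{\{i,j\}\in E(\Gn)} Y_iY_j ,
\end{equation*}
with $L_n$ and $Q_n$ uncorrelated. Moreover $\Var{L_n}=(1-2p)^2p(1-p)\sum_i d_{G_n}(i)^2$ and $\Var{Q_n}=4p^2(1-p)^2\NH{\Ktwo}$. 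This agrees with \eqref{eq:var_term}, since $\sum_i d_{G_n}(i)^2 = 2\NH{\Pthree}+\NH{\Ktwo}$. Write $\sigma_n^2:=\Var{T_n}\asymp p_n\sum_i d_{G_n}(i)^2 + p_n^2\NH{\Ktwo}$.

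\textbf{Step 1: expand the third moment.} I will compute $\E{(L_n+Q_n)^3}$ exactly as a combination of motif-type sums, using the centered moments $\E{Y_i^2}=p(1-p)$, $\E{Y_i^3}=p(1-p)(1-2p)$ and $\E{Y_i}=0$. The contributions are:
\begin{itemize}
\item $\E{L_n^3}=(2p-1)^3\,p(1-p)(1-2p)\sum_i d_{G_n}(i)^3 \approx -p_n\sum_i d_{G_n}(i)^3$, which is negative;
\item $\E{Q_n^3}$, consisting of single edges counted three times ($\asymp p_n^2\NH{\Ktwo}$) and triangles ($\asymp p_n^3\NH{\Kthree}$);
\item $3\E{L_n^2Q_n}\asymp p_n^2\sum_{\{i,j\}\in E}d_{G_n}(i)d_{G_n}(j)$;
\item $3\E{L_nQ_n^2}$, of order $p_n^2\sum_i d_{G_n}(i)^2$ plus path terms of similar order.
\end{itemize}

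\textbf{Step 2: show $\E{U_n^3}\to0$ forces the Stein-error quantities to vanish.} The comparison $d_id_j\le\frac12(d_i^2+d_j^2)$, summed over edges, shows the cross terms are $O(p_n^2\sum_i d_{G_n}(i)^3)$. Similarly $p_n^3\NH{\Kthree}\le p_n^3\sum_i d_{G_n}(i)^2$ is $o(\sigma_n^2)$. Consequently
\begin{equation*}
\E{U_n^3} = \frac{-(1+o(1))\,p_n\sum_i d_{G_n}(i)^3 + c_n\,p_n^2\NH{\Ktwo}}{\sigma_n^3} + o\Big(\frac{p_n\sum_i d_{G_n}(i)^3 + p_n^2\NH{\Ktwo}}{\sigma_n^3}\Big)
\end{equation*}
for a bounded positive $c_n$. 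This step is the main obstacle. The two leading terms have opposite signs, so a priori they could cancel. To rule this out I would compare orders. If $p_n\sum_i d_i^3$ and $p_n^2\NH{\Ktwo}$ are comparable, then degrees are typically of order $p_n$ times something bounded. By Hölder, $(\sum_i d_i^2)^2\le\sum_i d_i\sum_i d_i^3$, and this would force the quadratic-part variance to be dominated in a way inconsistent with exact cancellation. If that fails, I would fall back on the exact lower-order coefficients (the $E[L_nQ_n^2]$ path terms) to break the tie.

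The target conclusion of this step is that both $p_n\sum_i d_{G_n}(i)^3=o(\sigma_n^3)$ and $p_n^2\NH{\Ktwo}=o(\sigma_n^3)$. Since $\sigma_n^2\gtrsim p_n^2\NH{\Ktwo}$, the second also yields $\sigma_n\to\infty$, which excludes the Poisson-type limits typical of sparse colorings.

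\textbf{Step 3: conclude by Stein's method.} With these bounds in hand, I will apply a Stein's-method bound for sums of locally dependent variables to $L_n+Q_n=\sum_{\{i,j\}\in E}\xi_{ij}$. Here $\xi_{ij}=2Y_iY_j+(2p-1)(Y_i+Y_j)$, and the dependency neighbourhood of $\xi_{ij}$ is the set of edges sharing a vertex with $\{i,j\}$. The resulting Wasserstein bound is of order
\begin{equation*}
\sigma_n^{-3}\sum_{e}\sum_{f\sim e}\sum_{g\sim e\cup f}\E{|\xi_e\xi_f\xi_g|}.
\end{equation*}
Counting by motifs (stars at a vertex, paths, triangles) together with the moment bounds $\E{|Y_i|^k}\asymp p_n$ for $k\ge 1$, this sum is $O\big(p_n\sum_i d_{G_n}(i)^3 + p_n^2\NH{\Ktwo} + p_n^2\sum_i d_{G_n}(i)^2\big)$. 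By Step 2 this is $o(\sigma_n^3)$, so $U_n\stackrel{D}{\rightarrow}N(0,1)$.
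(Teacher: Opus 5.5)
The step you flag as the main obstacle in Step~2 is argued incorrectly, but the obstacle itself is illusory. Every non-isolated vertex has $d_i\ge1$, so $2\NH{\Ktwo}=\sum_i d_i\le\sum_i d_i^3$. Hence $p_n^2\NH{\Ktwo}\le\tfrac{p_n}{2}\cdot p_n\sum_i d_i^3=o\big(p_n\sum_i d_i^3\big)$, and the two ``leading terms'' can never be comparable. Your sketch does not establish non-cancellation: degrees are positive integers, so they cannot be ``of order $p_n$''; the H\"older inequality does not address it; and $\E{L_nQ_n^2}$ has no path terms to fall back on, since only $e=f$ contributes.

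What you actually need is that every term other than $\E{L_n^3}$ is $O(p_n^2\sum_i d_i^3)$. This follows from $\sum_{\{i,j\}\in E(\Gn)}d_id_j\le\frac12\sum_i d_i^3$, $\sum_i d_i^2\le\sum_i d_i^3$ and $6\NH{\Kthree}\le\sum_i d_i^2$. Hence
\[
\E{S_n^3}=-p_n\Big(\sum_{i=1}^n d_i^3\Big)(1+O(p_n)),\qquad\text{so}\qquad \E{U_n^3}\to0\iff p_n\sum_{i=1}^n d_i^3=o(\sigma_n^3).
\]
This is exactly \cref{lem:thirdmomentsparse}, since $\NH{\Ktwo}+3\NH{\Pthree}+3\NH{\Konethree}=\frac12\sum_i d_i^3$: all leading contributions carry the same sign.

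Two smaller points in the same step. First, $\sigma_n\to\infty$ does not follow from $\sigma_n^2\gtrsim p_n^2\NH{\Ktwo}$ together with $p_n^2\NH{\Ktwo}=o(\sigma_n^3)$. It follows instead from $\sigma_n^2\lesssim p_n\sum_i d_i^2\le p_n\sum_i d_i^3=o(\sigma_n^3)$, although your Step~3 does not need it. Second, $\sum_i d_i^2=2\NH{\Ktwo}+2\NH{\Pthree}$, not $\NH{\Ktwo}+2\NH{\Pthree}$; with the correct identity your variance formula agrees with \eqref{eq:var_term}.

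With this repair the argument is complete, and Step~3 is a genuinely different route from the paper's. The paper bounds the cubic remainder $A_1$ by essentially the same triple count. It treats the fluctuation term $A_2$ in \eqref{eq:A1_A2} globally through $\E{|V_n-\E{V_n}|}$: it approximates $V_n$ in $L^1$ by $\sum_i d_i^2X_i$ and truncates degrees at $(p_n\sum_i d_i^3)^{1/3}$. This gives $|A_2|=O\big((p_n\sum_i d_i^3/\sigma_n^3)^{2/3}+p_n\big)$.

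Your Barbour--Karo\'nski--Ruci\'nski-type bound instead decouples each product $\xi_e\xi_f$ from all edges not touching $e\cup f$. Make sure you use the version with these pair neighbourhoods, since that is what keeps the count small. That bound also contains terms $\E{|\xi_e\xi_f|}\,\E{|\xi_g|}\le16p_n^2$, which are harmless here.

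Since $|\xi_e|\le1$, you have $\E{|\xi_e\xi_f\xi_g|}\le\E{|\xi_e|}\le4p_n$. The number of admissible triples is at most $\sum_{\{i,j\}\in E(\Gn)}\big((d_i+d_j)^2+\sum_{k:\{i,k\}\in E(\Gn)}d_k+\sum_{k:\{j,k\}\in E(\Gn)}d_k\big)\le3\sum_i d_i^3$. Together these give $\Wass(U_n,N(0,1))\le C\,p_n\sum_i d_i^3/\sigma_n^3\le C'|\E{U_n^3}|$ for large $n$. That is a rate linear in the third moment, which is shorter and sharper than the paper's. In exchange, the paper's route uses only the one-level Stein identity and keeps the same $A_1/A_2$ skeleton as the fixed-$p$ case, where $\Var{V_n}$ and its $4$-star coefficient are what tie the CLT to the fourth moment.
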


The theorem shows that in the sparse regime, the third moment condition is sufficient for asymptotic normality of $U_n$.  Our next result states that  the fourth moment condition always implies the third moment condition in this regime.
\begin{prop}\label{ppn:thirdfourthsparse}
Let $\{\Gn\}_{n\in\N}$ be a sequence of graphs such that $\NH{\Ktwo}\to\infty$, and let $U_n$ be as defined in \eqref{eq:Un}. Suppose $p=p_n\to0$ as $n\to\infty$.
Then
$$
\E{U_n^4}\to3 \implies \E{U_n^3}\to0.
$$
\end{prop}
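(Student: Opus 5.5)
Throughout, write $\mu_n:=\E{U_n^3}$ and $\kappa_n:=\E{U_n^4}-3$. The plan is to express both quantities as explicit combinations of motif counts, normalized by powers of $\Var{T_n}$, and then show that every motif term appearing in $\mu_n$ is dominated by terms in $\kappa_n$ that cannot cancel when $p_n\to0$.

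The natural coordinates are the centered colors $Y_i:=X_i-p$. Then $\bm 1\{X_i=X_j\}-\tfrac12(1+u)=(1-2p)(Y_i+Y_j)+2Y_iY_j$. Hence
\[
T_n-\E{T_n}=(1-2p)\sum_i d_{G_n}(i)Y_i+2\sum_{\{i,j\}\in E(G_n)}Y_iY_j=:L_n+Q_n ,
\]
which splits the statistic into a linear part $L_n$ (first chaos) and a quadratic part $Q_n$ (second chaos). Write $\sigma^2:=\Var{T_n}$. By \eqref{eq:var_term}, $\sigma^2\asymp p\,(\NH{\Ktwo}+\NH{\Pthree})$ when $p\to0$.

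\emph{Third moment.} I expand $\E{(L_n+Q_n)^3}$ using the cumulants of the centered Bernoulli variables: $\E{Y^2}=p(1-p)$ and $\E{Y^3}=p(1-p)(1-2p)$. The leading contributions as $p\to0$ are:
\begin{itemize}
\item $\E{L_n^3}\approx p\sum_i d_{G_n}(i)^3$, which involves $\NH{\Ktwo}$, $\NH{\Pthree}$ and $\NH{\Konethree}$;
\item $3\E{L_n^2Q_n}\approx p^2\sum_{ij\in E}d_{G_n}(i)d_{G_n}(j)$ plus lower-order terms;
\item terms of order $p^3$ coming from triangles $\NH{\Kthree}$;
\item assorted $p^2$ terms on edges and paths.
\end{itemize}
Since all coefficients are nonnegative at leading order, I expect
\[
\mu_n\asymp \frac{p\sum_i d_{G_n}(i)^3+p^2\sum_{ij\in E}d_{G_n}(i)d_{G_n}(j)+p^3\NH{\Kthree}}{\sigma^3}.
\]
I would verify this positivity carefully.

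\emph{Fourth cumulant.} Similarly, $\kappa_n=\mathrm{cum}_4(T_n)/\sigma^4$. The dominant terms are of the form
\[
p\sum_i d_{G_n}(i)^4,\qquad p^2\sum_{ij\in E}\bigl(d_{G_n}(i)^2+d_{G_n}(j)^2\bigr)d_{G_n}(i)d_{G_n}(j),\qquad p^2\,\#\{\text{paths and } C_4\}.
\]
The key point is that when $p\to0$ every leading coefficient in the cumulant expansion is positive. This is precisely what fails at the exceptional values of $p$ in \cref{thm:Un}, where sign changes produce cancellation.

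\emph{Comparing the two.} Given the two expansions, the hypothesis $\kappa_n\to0$ controls each nonnegative piece separately. It then remains to bound each piece of $\mu_n$ by Cauchy–Schwarz against a piece of $\kappa_n$ times a power of $\sigma^2$. For example,
\[
p\sum_i d_{G_n}(i)^3\le \Bigl(p\sum_i d_{G_n}(i)^4\Bigr)^{1/2}\Bigl(p\sum_i d_{G_n}(i)^2\Bigr)^{1/2},
\]
and $p\sum_i d_{G_n}(i)^2\lesssim\sigma^2$ because $\sum_i d_{G_n}(i)^2=2\NH{\Ktwo}+2\NH{\Pthree}$. Dividing by $\sigma^3$ bounds this piece by $\sqrt{\kappa_n\text{-piece}}\to0$. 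The edge-product term is handled analogously using the second piece of $\kappa_n$.

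For the triangle term, I use $\NH{\Kthree}\lesssim\sum_{ij\in E}\min(d_{G_n}(i),d_{G_n}(j))$. This gives
\[
\frac{p^3\NH{\Kthree}}{\sigma^3}\lesssim\frac{p^3\sqrt{\NH{\Ktwo}\sum_i d_{G_n}(i)^3}}{\sigma^3},
\]
which again reduces to the previous cases. An alternative route uses the Bernoulli factor $p^{3/2}$ and the observation that $\sigma^2\to\infty$ is forced once $\kappa_n\to0$: if $\sigma^2$ stayed bounded, the sum would essentially be a Poisson-type variable with nonvanishing fourth cumulant.

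\emph{Main obstacle.} The hard step is the cumulant bookkeeping in the second step, namely confirming that no leading-order negative coefficients appear when $p_n\to0$. A subsidiary difficulty is handling the regime where $p\,\NH{\Ktwo}$ stays bounded, so that $\sigma^2$ does not diverge. There I would argue directly that $\kappa_n$ stays bounded away from zero, via the lower bound $\mathrm{cum}_4\gtrsim p\sum_i d_{G_n}(i)^4\ge p\,\sigma^4/(p^2\cdot\text{const})$.
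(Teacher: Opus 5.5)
Your overall route is the paper's. Write $d_i:=d_{G_n}(i)$ and $S:=N(K_2,G_n)+N(P_3,G_n)$, so that $\sum_i d_i^2=2S$ and $\sigma_n^2=2p_nS(1+O(p_n))$. The proofs of \cref{lem:thirdmomentsparse} and \cref{lem:fourthmomentsparse} establish exactly
\[
\mathbb{E}[S_n^3]=-p_n\sum_i d_i^3\,(1+O(p_n)),\qquad \mathbb{E}[S_n^4]-3\sigma_n^4=p_n\sum_i d_i^4\,(1+O(p_n))+o(p_n^2S^2).
\]
To see this, use $\sum_i d_i^3=2N(K_2,G_n)+6N(P_3,G_n)+6N(K_{1,3},G_n)$ and $\sum_i d_i^4=2N(K_2,G_n)+14N(P_3,G_n)+36N(K_{1,3},G_n)+24N(K_{1,4},G_n)$. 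The paper then links the two via the Cauchy--Schwarz bound of \cref{lem:4star3starbound}. Your inequality $\sum_i d_i^3\le(\sum_i d_i^4\cdot\sum_i d_i^2)^{1/2}$ is a cleaner version of that step. It gives $|\mathbb{E}[U_n^3]|\le(1+o(1))(p_n\sum_i d_i^4/\sigma_n^4)^{1/2}$ directly, so no separate bounded-variance case is needed. The inequality you wrote for that case is false in any event: it would require $\sum_i d_i^4\gtrsim(\sum_i d_i^2)^2$, which fails for a perfect matching. The correct bound is $p_n\sum_i d_i^4\ge p_n\sum_i d_i^2\asymp\sigma_n^2$, which shows that $\mathbb{E}[U_n^4]\to3$ forces $\sigma_n^2\to\infty$.

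The gap is at the step you yourself call the crux. The fourth cumulant is not a sum of nonnegative pieces, so the plan to verify that all leading coefficients are positive, and then let the hypothesis control each piece separately, fails as stated.

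\begin{itemize}
\item The linear part is $L_n=(2p_n-1)\sum_i d_iY_i$; your sign is off.
\item For instance, $4\kappa(L_n,L_n,L_n,Q_n)=-24p_n^2(1+O(p_n))\sum_{\{i,j\}\in E(G_n)}(d_i^2d_j+d_id_j^2)<0$.
\item The term $p_n^2\sum(d_i^2+d_j^2)d_id_j$ that you list does not occur. The only degree weight of total order four is the single-vertex sum $\sum_i d_i^4$ coming from $\kappa_4(L_n)$. So there is no second piece available for handling the edge-product term.
\end{itemize}

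What is true, and sufficient, is domination rather than positivity. The only contribution of order $p_n$ is $\kappa_4(L_n)=p_n\sum_i d_i^4\,(1+O(p_n))>0$. Every other term is $O(p_n^2)$ times one of two kinds of quantity:
\begin{itemize}
\item a quantity at most $\sum_i d_i^4$, e.g. $\sum_{\{i,j\}\in E(G_n)}(d_i^2d_j+d_id_j^2)\le\sum_i d_i^4$ by AM--GM; or
\item a count such as $N(C_4,G_n)$, $N(P_5,G_n)$ or $N(T_{3,2},G_n)$, which is $o(S^2)=o(\sigma_n^4/p_n^2)$.
\end{itemize}
Hence $\mathbb{E}[U_n^4]-3=(1+O(p_n))\,p_n\sum_i d_i^4/\sigma_n^4+o(1)$, and this is the only consequence of the hypothesis you need.

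The third moment likewise needs no sign information. Your claimed common sign is in fact false, since $\mathbb{E}[L_n^3]<0<3\,\mathbb{E}[L_n^2Q_n]$. Because $\sum_{\{i,j\}\in E(G_n)}d_id_j\le\frac12\sum_i d_i^3$ and $3N(K_3,G_n)\le N(P_3,G_n)$, every term other than $\mathbb{E}[L_n^3]$ is $O(p_n)$ times $p_n\sum_i d_i^3$. This yields the expansion of $\mathbb{E}[S_n^3]$ above and finishes the argument.
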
 
We conclude by stating a  lemma that  characterizes the fourth moment condition in the sparse regime in terms of motif counts.

\begin{lemma}\label{lem:fourthmomentsparse}
Let $\{\Gn\}_{n\in\N}$ be a sequence of graphs such that $\NH{\Ktwo} \to \infty$, and let $U_n$ be as defined in \eqref{eq:Un}. Suppose $p=p_n\to0$ as $n\to\infty$.
Then the following are equivalent:
\begin{enumerate}
\item[$(a)$] $\E{U_n^4} \to 3$.
\item[$(b)$] $p_n(\NH{\Ktwo}+\NH{\Pthree})\to\infty$ and $\NH{\Konefour}=o(p_n(\NH{\Ktwo}+\NH{\Pthree})^2)$.
\end{enumerate}
\end{lemma}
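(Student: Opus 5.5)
We will compute $\E{U_n^4}-3$ explicitly as a combination of motif counts and then read off when it vanishes in the limit with $p=p_n\to0$. Write $q=1-p$ and $Y_e=\bm 1\{X_i=X_j\}-\frac12(1+u)$ for $e=\{i,j\}$, so that $T_n-\E{T_n}=\sum_e Y_e$. Because $\frac12(1-u)=2pq$, the centered indicator has a simple form. If $\bar X_i=X_i-p$, then
\[
Y_e=\bm 1\{X_i=X_j\}-(1-2pq)=(1-2p)(\bar X_i+\bar X_j)+2\bar X_i\bar X_j,
\]
since $\bm 1\{X_i=X_j\}=1-X_i-X_j+2X_iX_j$. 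This is a Hoeffding/chaos decomposition: the first chaos is $L=(1-2p)\sum_i d_{G_n}(i)\bar X_i$ and the second chaos is $Q=2\sum_{\{i,j\}\in E(G_n)}\bar X_i\bar X_j$. Their variances are $\Var{L}=(1-2p)^2pq\sum_i d_{G_n}(i)^2\asymp p(\NH{\Ktwo}+\NH{\Pthree})$ and $\Var{Q}=4p^2q^2\NH{\Ktwo}$. Together these recover \eqref{eq:var_term}, and as $p\to0$ we get $\sigma_n^2:=\Var{T_n}\asymp p\,\NH{\Ktwo}+p\,\NH{\Pthree}\asymp pS_n$, where $S_n:=\NH{\Ktwo}+\NH{\Pthree}$.

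Next we expand the fourth cumulant $\kappa_4=\E{(\sum_eY_e)^4}-3\sigma_n^4$ as a sum over connected multigraphs formed by four edges $e_1,\dots,e_4$. Joint cumulants of the $Y_e$ vanish unless the union of the four edges is connected. Each connected configuration $H$ contributes $\NH{H}$ times a polynomial $c_H(p)$, and we only need the leading power of $p$ for each $H$. For this we use $\E{\bar X^k}=pq(q^{k-1}-(-p)^{k-1})\sim p$ for all $k\ge2$, so each cumulant entry is of order $p^{\#\text{vertices touched, roughly}}$. The relevant configurations are the connected graphs with at most four edges, allowing repeated edges: $\Ktwo$, $\Pthree$, $\Kthree$, $\Pfour$, $\Konethree$, $\Cfour$, $\Paw$, $\Pfive$, $\Konefour$, $\Tthreetwo$, and the others. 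The key leading-order terms are:
\begin{itemize}
\item[(1)] the star term $\NH{\Konefour}$, where the centre vertex carries $\E{\bar X^4}\sim p$ and the leaves carry the first chaos, giving $c\asymp p$;
\item[(2)] the single-edge and small terms, of order $p\,\NH{\Ktwo}$, $p\,\NH{\Pthree}$ and $p\,\NH{\Konethree}$.
\end{itemize}
We will show that every other term is bounded by a combination of these. The main tools are Cauchy--Schwarz inequalities on the degree sequence, such as $\NH{\Konethree}\le\sqrt{\NH{\Konefour}\,\NH{\Pthree}}$ and $\NH{\Cfour},\NH{\Pfour}\lesssim\sum_i d_{G_n}(i)^2\cdot\max$, together with the extra factors of $p$ attached to the configurations that touch more vertices through second chaos pieces.

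The outcome should be
\[
\kappa_4/\sigma_n^4\asymp\frac{p\,\NH{\Konefour}+pS_n+\text{(dominated terms)}}{p^2S_n^2}.
\]
This yields both directions of the equivalence.
\begin{itemize}
\item[(b)$\Rightarrow$(a):] The term $pS_n/(p^2S_n^2)=1/(pS_n)$ tends to $0$ exactly when $pS_n\to\infty$, and the term $\NH{\Konefour}/(pS_n^2)$ tends to $0$ exactly when the second half of (b) holds. Hence $\kappa_4/\sigma_n^4\to0$, which is the same as $\E{U_n^4}\to3$.
\item[(a)$\Rightarrow$(b):] We need a matching lower bound. The step we expect to be delicate is positivity. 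For $p$ small every leading coefficient is positive: the $p$-order terms all come from $\E{\bar X^k}\sim p>0$ with the same sign. So no cancellation occurs, $\kappa_4\gtrsim p\,\NH{\Konefour}+pS_n$, and $\E{U_n^4}\to3$ forces both conditions in (b).
\end{itemize}

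The main obstacle is the bookkeeping of the roughly twenty configurations: we must check that each has a positive leading coefficient of order at least $p$, and that each is dominated by $p\,\NH{\Konefour}+pS_n$ up to $o(p^2S_n^2)$. The configurations to treat most carefully are those like $\Cfour$ and $\Tthreetwo$, which touch many distinct vertices; for these we plan to use the extra $p$ factors together with $\NH{\Tthreetwo}\le\sqrt{\NH{\Konefour}}\cdot O(S_n)$.
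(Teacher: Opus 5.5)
Your cumulant expansion and your identification of the order-$p_n$ term agree with the paper. The direction (a)$\Rightarrow$(b), however, rests on the claim that every connected configuration has a positive leading coefficient, and this is false.

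Write $\kappa_4=\E{(T_n-\E{T_n})^4}-3\sigma_n^4$, and keep your $S_n=\NH{\Ktwo}+\NH{\Pthree}$. After the disconnected motifs are cancelled against $3\sigma_n^4$:
\begin{itemize}
\item the coefficient of $\NH{\Tthreetwo}$ in $\kappa_4$ is $-3u_n^2(1-u_n)^2\approx-48p_n^2$;
\item the coefficient of $\NH{\Paw}$ is $-6u_n^2(1-u_n)^2\approx-96p_n^2$;
\item the coefficients of $\Kthree$ and $\Pfour$ are also negative, of order $p_n^2$.
\end{itemize}

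The same thing shows up in your chaos language. The correct decomposition is $Y_e=-(1-2p)(\bar X_i+\bar X_j)+2\bar X_i\bar X_j$; your linear part has the wrong sign, as you can see by trying $X_i=X_j=1$. With the correct sign, the cross term
\[
4\kappa(L,L,L,Q)=-24(1-2p)^4p^2q^2\sum_{\{i,j\}\in E(G_n)}d_{G_n}(i)d_{G_n}(j)\bigl(d_{G_n}(i)+d_{G_n}(j)\bigr)
\]
is negative. Only the order-$p_n$ part is sign-definite. It is exactly $\kappa_4(L)=(1-2p)^4pq(1-6pq)\sum_i d_{G_n}(i)^4$, and in the star term all four first-chaos factors sit at the centre, not at the leaves.

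You propose to bound the remaining terms by $C\,p_n(\NH{\Konefour}+S_n)$ with an unspecified constant $C$. That does not rule out cancellation, so your lower bound, and with it (a)$\Rightarrow$(b), is unproved.

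Two of the auxiliary inequalities you quote are also false as stated:
\begin{itemize}
\item $\NH{\Konethree}\le\sqrt{\NH{\Konefour}\NH{\Pthree}}$ fails for disjoint copies of $K_{1,3}$; the additive $\NH{\Pthree}$ in \cref{lem:4star3starbound} is needed.
\item $\NH{\Tthreetwo}\le\sqrt{\NH{\Konefour}}\,O(S_n)$ fails for cubic graphs, where the left side is positive and $\NH{\Konefour}=0$.
\end{itemize}

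What you need is a magnitude bound, valid without assuming (b), showing that everything except $\kappa_4(L)$ is negligible. The paper gets this as follows. Every connected motif other than $\Ktwo,\Pthree,\Konethree,\Konefour$ has coefficient $O(p_n^2)$. Its count is $o\bigl((\NH{\Ktwo}+\NH{\Pthree})^2\bigr)$, using $\binom{d_{G_n}(i)}{2}\le\NH{\Pthree}$ and \eqref{eq:NHtree}. Hence each such term is $o(\sigma_n^4)$ whatever its sign, which gives \eqref{eq:c}.

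Within your chaos set-up you can do slightly better. Every term other than $\kappa_4(L)$ needs nonzero cumulants at two or more distinct vertices, so it carries a factor $p_n^2$. Every weight that occurs is $O\bigl(\sum_i d_{G_n}(i)^4\bigr)$. For instance, summing over ordered adjacent pairs, $\sum_{(i,j)}d_{G_n}(i)^2d_{G_n}(j)\le\sum_i d_{G_n}(i)^4$ by $a^2b\le\frac{2a^3+b^3}{3}$. Hence $\kappa_4=p_n(1+O(p_n))\sum_i d_{G_n}(i)^4$, and positivity becomes automatic.

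It follows that (a) holds if and only if $\sum_i d_{G_n}(i)^4=o(p_nS_n^2)$. This is equivalent to (b) because
\begin{itemize}
\item $\sum_i d_{G_n}(i)^4=2\NH{\Ktwo}+14\NH{\Pthree}+36\NH{\Konethree}+24\NH{\Konefour}\ge 2S_n$, and
\item $\NH{\Konethree}\le\NH{\Pthree}+4\NH{\Konefour}$.
\end{itemize}
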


As remarked earlier, we show in \cref{example:thirdsparse} that there are graph sequences, (consisting of an $n^2$-star and $n^5$ disjoint edges, $p_n = \frac{1}{n^3}$), for which $U_n$ is asymptotically normal, even though its third moment does not tend to zero. In \cref{example:sparsenormal} we provide a graph sequence, (an $n$-star and $n^3$ disjoint edges, $p_n = \frac{1}{n^2}$),  for which the third moment condition holds, and hence $U_n$ is asymptotically normal, while the fourth moment condition fails. Hence the implication in \cref{ppn:thirdfourthsparse} is  strict and that the third moment condition is strictly weaker in this regime.  The proof of \cref{ppn:clthalf_sparse} is given in \cref{sec:Unpf} while \cref{ppn:thirdfourthsparse} and \cref{lem:fourthmomentsparse} are shown in \cref{sec:thftmph}.

\subsection{Asymptotic Properties of Graph Based Two-Sample Tests} 
\label{sec:knn}

In this section, we will derive sharp conditions for  the asymptotic normality and consistency of two-sample tests based on nearest-neighbor graphs. We begin by formally describing the two-sample problem: Given two probability measures $P$ and $Q$ in $\R^d$, the two-sample problem is to test the hypotheses: 
$$H_0 : P = Q \qquad \text{against} \qquad H_1 : P \neq Q,$$
based on independent samples from the respective distributions. Throughout, we will assume $P$ and $Q$ have densities $f$ and $g$ with respect to the Lebesgue measure on $\R^d$.

\begin{definition}\label{def:knn}
Fix $K \geq 1$. Given a finite set $S \subset \R^{d}$ with pairwise distinct distances, the undirected $K$-nearest neighbor graph ($K$-NN) is the graph with vertex set $S$ and an edge
$\{a,b\}$ between distinct points $a,b\in S$ if $b$ is one of the
$K$ nearest neighbors of $a$ in $S\backslash\{a\}$ or $a$ is one of
the $K$ nearest neighbors of $b$ in $S\backslash\{b\}$.
If $|S|\le K$, we adopt the convention that the graph is complete.
Denote the undirected $K$-NN graph of $S$ by $\mathcal G_K(S)$.
\end{definition}

Here, we will consider the two-sample problem in the Poissonized setting, where instead of taking a fixed number $n_{1}$ and $n_2$ samples from the two distributions, we have $\mathrm{Poi}(n_{1})$
from $f$ and $\mathrm{Poi}(n_{2})$ samples from $g$. Specifically, suppose
$\mathcal X=\{X_{1}, X_{2}, \ldots\}$ and $\mathcal Y=\{Y_{1}, Y_{2}, \ldots\}$
are independent collections of i.i.d. samples from $f$ and $g$, respectively. Define 
\begin{align}
\mathcal X_{n_{1}}=\{X_{1}, X_{2}, \ldots, X_{L_{n_{1}}}\}\quad \text{and}\quad \mathcal Y _{n_{2}}= \{Y_{1}, Y_{2}, \ldots, Y_{L_{n_{2}}}\},
\label{2poisson}
\end{align}
where $L_{n_{1}}\sim\mathrm{Poi}(n_{1})$ and $L_{n_{2}} \sim\mathrm{Poi}
(n_{2})$ are independent of each other, and of $\mathcal X$ and $\mathcal Y$.
Poissonization is a common assumption in this literature \cite{Bhattacharya2020,Kanekar2025,distributioncomparison}, which
simplifies calculations, due to the spatial independence of the Poisson
process, and yields cleaner formulas for the asymptotic variances.  Then the Poissonized two-sample statistic based on the $K$-NN graph is defined as follows:\footnote{Since $\mathcal X_{n_{1}}$ and $\mathcal Y_{n_{2}}$ are independent observations from distributions that are absolutely continuous with respect to Lebesgue measure, all the interpoint distances of the set $\mathcal X_{n_{1}}\cup \mathcal Y_{n_{2}}$ are distinct almost surely.} 
\begin{align}
\label{graph2testp} 
T^{\times}(\mathcal G_K(\mathcal X_{n_{1}}\cup
\mathcal Y_{n_{2}})):=\sum
_{i=1}^{L_{n_{1}}} \sum_{j=1}^{L_{n_{2}}}
\pmb1\{ \{X_{i}, Y_{j}\} \in E(\mathcal G_K(
\mathcal X_{n_{1}}\cup \mathcal Y_{n_{2}})
)\}.
\end{align}
The distribution of this statistic in \eqref{graph2testp} can be equivalently described as follows: Let
$\phi_n(x):=\frac{n_{1}}{n} f(x)+ \frac{n_{2}}{n} g(x)$ and
$Z_{1}, Z_{2}, \ldots,$ be independent random variables with common
density $\phi_n(\cdot)$ and $L_{n}$ be an independent Poisson variable with mean $n = n_{1}+n_{2}$. Then 
\begin{align}\label{eq:Zn}
\mathcal Z_{n}  =  \{  Z_{1}, Z_{2}, \ldots, Z_{L_{n}} \}  ,  
\end{align}  
is a nonhomogeneous Poisson process in $\R^{d}$
with rate function $n_{1} f+n_{2} g$. Label each point  
$z\in\mathcal Z_{n}$ independently with
\begin{align*}  
c_{z}= 
\begin{cases} 1 &
\text{with probability } \frac{n_{1} f(z)}{n_{1} f(z)+n_{2}
g(z)},
\\
2 & \text{with probability } \frac{n_{2} g(z)}{n_{1} f(z)+n_{2} g(z)}. \end{cases}
\end{align*} 
Then the sets of points assigned labels 1 and 2 have the same
distribution as $\mathcal X_{n_{1}}$ and $\mathcal Y_{n_{2}}$ (as in \eqref{2poisson}), respectively. Therefore, \eqref{graph2testp} is equal in distribution to
\begin{align}\label{eq:Tpoissonc}
T^{\times}(\mathcal G_K(\mathcal Z_{n})) & \stackrel{D} = \sum_{\{x, y\} \in\mathcal Z_{n}} \bm 1\{c_{x} \ne c_{y}\} \bm 1\{\{x, y\}\in E(\mathcal G_K(\mathcal Z_{n}))\}  \\  
& = |E(\mathcal G_K(\mathcal Z_{n}))|  -  T^{\circ}(\mathcal G_K(\mathcal Z_{n}))  ,       \nonumber  
\end{align}
where the sum is over unordered pairs $\{x, y\}$, with $ x, y \in\mathcal Z_{n}$, and $|E(\mathcal G_K(\mathcal Z_{n}))|=\NHK{\Ktwo}$ is the number of edges in $\mathcal G_K(\mathcal Z_{n})$ and 
\begin{align}
\label{eq:Tpoissonm} 
T^{\circ}(\mathcal G_K(\mathcal Z_{n})) = \sum_{ \{ x, y \} \in\mathcal Z_{n}} \bm 1\{c_{x} = c_{y}\} \bm 1\{\{x, y\}\in E(\mathcal G_K(\mathcal Z_{n}))\}  , 
\end{align}
is the number of monochromatic edges in $\mathcal G_K(\mathcal Z_{n})$. 
Let $\mathscr F=\sigma(\mathcal Z_n)$ be the sigma-algebra generated by $L_n$ and the pooled sample in \eqref{eq:Zn}. The edge count $|E(\mathcal G_K(\mathcal Z_n))|$ is $\mathscr F$-measurable, since it is determined by the pooled sample without reference to the markings. Consequently, conditional on $\mathscr F$, tests based on the statistics $T^{\times}(\mathcal G_K(\mathcal Z_{n}))$ and $T^{\circ}(\mathcal G_K(\mathcal Z_{n}))$ are equivalent: rejecting for small values of $T^{\times}(\mathcal G_K(\mathcal Z_n))$ corresponds to rejecting for large values of $T^{\circ}(\mathcal G_K(\mathcal Z_n))$.

Under the null hypothesis $H_0$, let $f$ denote the common density. Then $\mathcal Z_n=\{Z_1,\ldots,Z_{L_n}\}$ is a Poisson process on $\R^d$ with intensity function $nf$. Independently of the point locations, each point receives label $1$ with probability $\lambda_n:= \frac{n_1}{n}$ and label $2$ with probability $1-\lambda_n= \frac{n_2}{n}$. By independent thinning, $\mathcal X_{n_1}$ and $\mathcal Y_{n_2}$ are independent Poisson processes with intensity functions $n_1f$ and $n_2f$, respectively. Thus, conditional on $\mathcal Z_n$, $\mathcal G_K(\mathcal Z_n)$ is a fixed graph with an i.i.d.\ $\Ber(\lambda_n)$ vertex coloring, reducing the problem to the setting considered in the previous section. Specifically, conditional on $\mathscr F$, the expectation and variance of $T^{\times}(\mathcal G_K(\mathcal Z_{n}))$ under $H_0$ are given by:  
$$\mu_{\mathscr F}^\times := \E{T^{\times}(\mathcal G_K(\mathcal Z_{n}))| \mathscr F} = 2 \lambda_n(1-\lambda_n) \NHK{\Ktwo} $$
and, recalling \eqref{eq:var_term}, 
\begin{align}\label{eq:varianceT}
\sigma_{\mathscr F}^2 := \Var{T^{\times}(\mathcal G_K(\mathcal Z_{n}))  | \mathscr {F} } & = \Var{T^{\circ}(\mathcal G_K(\mathcal Z_{n}))  | \mathscr {F} }  \nonumber  \\   
& = \frac{1-\nu_n^2}{4} N(\Ktwo,\mathcal G_K(\mathcal Z_n)) + \frac{\nu_n-\nu_n^2}{2} N(\Pthree,\mathcal G_K(\mathcal Z_n))  ,  
\end{align}
where $\nu_n := (2 \lambda_n-1)^2$. Define 
\begin{equation}\label{eq:Un_pois}
U_n^\times(\mathcal G_K(\mathcal Z_{n}))  := \frac{T^{\times}(\mathcal G_K(\mathcal Z_{n})) - \mu_{\mathscr F}^\times }{\sigma_{\mathscr F} },
\end{equation}
where we put $U_n^\times(\mathcal G_K(\mathcal Z_{n}))  := 0$ on the event $\{\sigma_{\mathscr F}^2 = 0\} = \{L_n\le 1\}$, whose probability tends to zero. The following result establishes the asymptotic normality of $U_n^\times(\mathcal G_K(\mathcal Z_{n}))$ under $H_0$, with convergence in the Wasserstein distance.

\begin{theorem}\label{thm:cltH0}
Let $U_n^\times(\mathcal G_K(\mathcal Z_{n})) $ be as defined above in \eqref{eq:Un_pois}. Then the following hold under $H_0$: 
\begin{itemize} 

\item[$(i)$]  Suppose $\lambda_n= \frac{n_1}{n}  \rightarrow \lambda  \in  (0, 1) \backslash \{ \frac{1}{2}\}$. Then $\Wass(U_n^\times(\mathcal G_K(\mathcal Z_{n})) , N(0,1) ) \rightarrow 0$, 
for any sequence $K_n \ge 1$. 

\item[$(ii)$]  Suppose $n_1 = n_2$. Then 
$\Wass(U_n^\times(\mathcal G_K(\mathcal Z_{n})) , N(0,1) ) \rightarrow 0$, for any $K \geq 1$ fixed or any sequence $K_n \rightarrow \infty$ with $K_n = o(n)$.     

\end{itemize} 
\end{theorem}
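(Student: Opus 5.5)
We argue conditionally on $\mathscr F$. Conditional on $\mathcal Z_n$, the graph $\mathcal G_K(\mathcal Z_n)$ is fixed and the labels are i.i.d.\ $\Ber(\lambda_n)$, so $U_n^\times = -U_n$ for the monochromatic count on this graph. It therefore suffices to obtain a quantitative Wasserstein bound for $U_n$ in terms of motif counts, valid for every fixed graph and every $p$. We then show that the bound tends to zero in probability (and boundedly, since Wasserstein distance between unit-variance laws is at most $2$). Integrating over $\mathscr F$ and using the convexity of $\Wass$ under mixtures gives the unconditional statement.

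\textbf{Step 1: conditional Stein bound.} Write $T_n$ as a sum over edges of locally dependent indicators. The dependency neighbourhood of an edge consists of the edges sharing a vertex with it. The standard local-dependence Stein bound gives
\[
\Wass(U_n,N(0,1)) \lesssim \frac{\sqrt{R_4}}{\sigma^2} + \frac{R_3}{\sigma^3},
\]
where $R_3$ and $R_4$ count connected edge configurations of size $3$ and $4$ (motifs such as $\Kthree,\Konethree,\Pfour,\Konefour,\Cfour,\Paw,\Tthreetwo$), weighted by centred moments. Alternatively, we can invoke the quantitative form of the proof of \cref{thm:Un}, which shows the bound is controlled by $\NH{\Konefour}/(\NH{\Ktwo}+\NH{\Pthree})^2$ for $p$ bounded away from $\tfrac12$ and $0,1$, and by $\NH{\Cfour}/\NH{\Ktwo}^2$ at $p=\tfrac12$. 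The key point is that this control must be uniform in $p$ in a compact neighbourhood of $\lambda$, because $\lambda_n$ varies with $n$. This needs only that the constants in \eqref{eq:var_term} are continuous in $u$ with $u-u^2$ bounded below near $\nu=(2\lambda-1)^2\in(0,1)$.

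\textbf{Step 2: geometry of $K$-NN graphs.} Every vertex of $\mathcal G_K(S)$ has out-degree $K$, and by the kissing-number argument in-degree at most $C_dK$. Hence all degrees are between $K$ and $C_dK$ (when $|S|>K$). Consequently $N(\Ktwo)\asymp KL_n$, $N(\Pthree)\asymp K^2L_n$, $N(\Konefour)\lesssim K^4L_n$, and $N(\Cfour)\lesssim K^3 L_n$, since a $4$-cycle is determined by a path of length $2$ plus a common neighbour.

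For (i), the ratio $N(\Konefour)/N(\Pthree)^2 \lesssim 1/L_n\to0$ in probability for any $K_n$. One subtlety arises when $K_n\ge L_n$: the graph is complete, but the bound $K^4L_n/(K^4L_n^2)$ still works with $K$ replaced by $L_n-1$.

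For (ii), $\nu_n=0$, and we need $N(\Cfour)=o(N(\Ktwo)^2)$, i.e.\ $K^3L_n = o(K^2L_n^2)$, i.e.\ $K=o(L_n)$. This holds with probability tending to one since $L_n/n\to1$ and $K_n=o(n)$.

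\textbf{Main obstacle.} The hardest part is Step 1 in case (i). \cref{thm:Un} is only qualitative and is stated for fixed $p$, so we must extract an explicit Wasserstein rate with constants uniform over $p$ near $\lambda$. This requires verifying that all lower-order motif contributions ($\Konethree$, $\Pfour$, etc.) are dominated by $\Konefour$ and $(\NH{\Ktwo}+\NH{\Pthree})^2$ with $p$-uniform constants. The bounded-degree-ratio property of $K$-NN graphs makes this easier, because every connected motif count on $k$ edges is $O(K^{k}L_n)$. So one can simply bound $R_3\lesssim K^3L_n$ and $R_4\lesssim K^4 L_n$ directly and compare with $\sigma^2\asymp K^2L_n$.

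A complication at $p=\tfrac12$ is that centred edge indicators cancel the star terms. There the bound must instead be taken from the Rademacher-chaos (fourth moment) route, whose quantitative form is $\Wass\lesssim\sqrt{|\E{U_n^4}-3|}$. This reduces to $N(\Cfour)/N(\Ktwo)^2$ plus a maximal-influence term of order $K/(KL_n)$, both of which vanish.
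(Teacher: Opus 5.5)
Your proposal is correct and follows essentially the same route as the paper. You condition on $\mathscr F$ and apply a quantitative motif-count Wasserstein bound: the $4$-star bound of \cref{remark:clt} for $\lambda\neq\frac12$, and at $p=\frac12$ either the $4$-cycle bound of \cref{ppn:clthalf} or your Rademacher-chaos fourth-moment-plus-influence bound, which serves equally well. You then control the motif counts through the degree sandwich $\tilde K\le d_x\le B_d\tilde K$, and integrate out $\mathscr F$ using that the conditional distance is bounded.

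Your explicit check that the constant is uniform in $p$ near $\lambda$ makes precise what the paper uses implicitly when it writes $C_\lambda$. The only slip is that $N(\Pthree)\asymp K^2L_n$ fails for $K=1$, since a $1$-NN graph can be a perfect matching. The denominators should therefore be $N(\Ktwo)+N(\Pthree)\ge\frac12L_n\tilde K^2$, which is what the bound actually needs.
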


 The result distinguishes two regimes.  For unbalanced sample sizes,
 in $(i)$, with $\frac{n_1}{n} \to\lambda\in(0,1)\backslash\{\frac12\}$,
 the standardized null test statistic converges to $N(0, 1)$ in
 Wasserstein distance for every sequence $K_n\ge1$, including fixed
 $K$ and sequences diverging at any rate (see \eqref{eq:Wp} for an
 explicit convergence rate). For equal sample sizes, in (ii), the same
 conclusion holds for every fixed $K$ and every growing sequence
 $K_n=o(n)$. Together, these conclusions give sharp growth conditions
 as general guarantees of asymptotic normality. In particular, the
 sublinear restriction in the balanced case cannot generally be
 removed. For instance, in the balanced coloring of the complete graph the centered statistic is a degenerate $U$-statistic of order two and has a non-Gaussian limit
 given by a scaled, centered chi-square variable (see
 \cite[Example~7.1]{BDM2017}). The proof of \cref{thm:cltH0} is given in \cref{sec:proofs-knn}.

Although we have so far focused on $K$-NN graphs for illustration, the preceding approach extends almost verbatim to two-sample edge-count tests based on general similarity graphs. Specifically, suppose $G_n$ is any simple graph constructed on the pooled sample $\mathcal Z_n$ without using the sample labels. Define $T^\times(G_n)$ and $U_n^\times(G_n)$ as in \eqref{eq:Tpoissonc} and \eqref{eq:Un_pois}, respectively, with $\mathcal G_K(\mathcal Z_n)$ replaced by $G_n$. The formulas for the conditional mean and variance remain unchanged under this replacement. Under $H_0$, conditional on $\mathscr F$, the vertex labels are independent $\Ber(\lambda_n)$ variables. Consequently, the quantitative bounds provided by \cref{thm:clt} and \cref{ppn:clthalf} in \cref{sec:wass} give the following general conditions for asymptotic normality for any similarity graph $G_n$ constructed as above with $\NH{\Ktwo}\stackrel{P}{\rightarrow}\infty$: 
\begin{itemize}

\item If $\lambda_n\to\lambda\in(0,1)\backslash\{\frac12\}$, then the $\frac{\NH{\Konefour}}{(\NH{\Ktwo}+\NH{\Pthree})^2}\stackrel{P}{\rightarrow}0$ implies $\Wass(U_n^\times(G_n),N(0,1))\to0$.

\item If $n_1=n_2$, the same conclusion holds when $\frac{\NH{\Cfour}}{\NH{\Ktwo}^2}\stackrel{P}{\rightarrow}0$. 
\end{itemize}

Examples of commonly used similarity graphs beyond $K$-NNs, include $K$-minimum spanning trees ($K$-MSTs) \cite{friedman1979,ChenFriedman2017} and $K$-minimum-distance matchings ($K$-matchings) \cite{ZhouChen2023}. A $K$-MST is the union of $K$ successively constructed minimum spanning trees, with previously selected edges excluded at each step. Similarly, on an even number of observations, a $K$-matching is the union of $K$ successively constructed minimum-weight perfect matchings, each minimizing the total dissimilarity between paired observations subject to the exclusion of previously selected edges. The case $K=1$ gives the graph underlying Rosenbaum's cross-match test \cite{rosenbaum2005}. For the Poissonized sample, if $L_n$ is odd, we omit one observation using a rule that does not depend on the sample labels. If $K$ exceeds the number of retained observations minus one, so that $K$ edge-disjoint perfect matchings do not exist, we adopt the convention that $G_n$ is the complete graph on the retained observations, as in \cref{def:knn}. Each retained vertex then has degree $\min\{K, |V(G_n)|-1\}$, hence, the relevant graph conditions for asymptotic normality can be easily verified. This leads to the following corollary, which follows by the same argument as in the proof of \cref{thm:cltH0}:

\begin{corollary}\label{cor:matching}
Let $G_n$ be the $K$-matching graph described above and define $U_n^\times(G_n)$ as in \eqref{eq:Un_pois}. Then the following hold under $H_0$:
\begin{itemize}

\item[$(1)$] Suppose $\lambda_n=\frac{n_1}{n}\to\lambda\in(0,1)\backslash\{\frac12\}$. Then
$\Wass(U_n^\times(G_n),N(0,1))\to0$,  for any sequence $K_n\ge1$.

\item[$(2)$] Suppose $n_1=n_2$. Then
$\Wass(U_n^\times(G_n),N(0,1))\to0$,
for any fixed $K\ge1$ or any sequence $K_n\to\infty$ with $K_n=o(n)$.

\end{itemize}
\end{corollary}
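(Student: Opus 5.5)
Conditional on $\mathscr F$, the $K$-matching graph $G_n$ is a fixed graph whose vertices carry i.i.d.\ $\Ber(\lambda_n)$ labels. So the plan is to apply the quantitative bounds of \cref{thm:clt} (for $\lambda_n\to\lambda\neq\frac12$) and \cref{ppn:clthalf} (for $n_1=n_2$) conditionally. We then integrate the conditional Wasserstein bound over $\mathscr F$, exactly as in the proof of \cref{thm:cltH0}. Since the conditional Wasserstein distance is bounded by a constant (both laws have unit variance), it suffices to show the following. First, $\NH{\Ktwo}\stackrel{P}{\rightarrow}\infty$. Second, the relevant motif ratio, either $\NH{\Konefour}/(\NH{\Ktwo}+\NH{\Pthree})^2$ or $\NH{\Cfour}/\NH{\Ktwo}^2$, tends to $0$ in probability. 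Dominated convergence then gives $\Wass(U_n^\times(G_n),N(0,1))\to0$. The event $\{L_n\le1\}$, and more generally the event that $L_n$ is small, has vanishing probability and is harmless.

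The graph is $D$-regular on $m$ retained vertices, where $m\in\{L_n,L_n-1\}$ and $D=\min\{K,m-1\}$. The matchings are edge-disjoint, so the union of $K$ of them is $K$-regular; in the complete-graph convention it is $(m-1)$-regular. On $\{L_n\ge n/2\}$, whose probability tends to one, $m\to\infty$. In a $D$-regular graph on $m$ vertices the motif counts satisfy the following:
\begin{equation*}
\NH{\Ktwo}=\tfrac{mD}{2},\qquad \NH{\Pthree}=m\binom{D}{2},\qquad \NH{\Konefour}=m\binom{D}{4},\qquad \NH{\Cfour}\le \tfrac{1}{8}mD^3 .
\end{equation*}
The $4$-cycle bound holds because a $4$-cycle is fixed by choosing a path of length $3$, of which there are at most $mD^3$. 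Hence $\NH{\Ktwo}\ge m/2\to\infty$.

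For part $(1)$, we have $\NH{\Konefour}/(\NH{\Ktwo}+\NH{\Pthree})^2\lesssim mD^4/(m^2D^4)=1/m\to0$, uniformly in $K$. This covers the case $D=1$ as well, since then $\NH{\Konefour}=0$.

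For part $(2)$, we have $\NH{\Cfour}/\NH{\Ktwo}^2\lesssim mD^3/(m^2D^2)=D/m$. For fixed $K$, or for $K_n=o(n)$, this tends to $0$ in probability, because $m\ge n/2-1$ with high probability and so $D/m\le 2K_n/n(1+o(1))\to0$. Note that when $K_n=o(n)$ the complete-graph convention is triggered only on an event of vanishing probability.

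The main obstacle is bookkeeping rather than substance. We must check that the bounds in \cref{thm:clt} and \cref{ppn:clthalf} are explicit functions of these ratios, and that they apply with $\lambda_n$ varying but bounded away from $\{0,1,\frac12\}$ in case $(1)$. We must also handle the removal of one vertex when $L_n$ is odd. That rule is label-independent and $\mathscr F$-measurable, so conditionally the retained vertices still carry i.i.d.\ labels and the reduction is unaffected.
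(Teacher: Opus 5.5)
Your proposal is correct and follows the paper's route: conditionally on $\mathscr F$ you apply the explicit bounds \eqref{eq:WNUndense} and \eqref{eq:WUnZhalf} to the $\min\{K,m-1\}$-regular graph, and \eqref{eq:WNUndense}, stated in \cref{remark:clt}, is exactly the ``explicit function of the ratios'' you say must be checked. Your exact regular-graph motif counts take the place of the degree bounds of \cref{lem:NHK}, and you remove the conditioning on $\{L_n\ge n/2\}$ using $\Wass\le 2$, as in the proof of \cref{thm:cltH0}; the only cosmetic difference is that you conclude by bounded convergence rather than by writing out an explicit rate in $L_n$ and $\tilde K$.
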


The feature that makes two-sample tests based on geometric graphs
particularly attractive is their universal consistency, that is, their
power converges to one against every fixed alternative for which $f$
and $g$ differ on a set of positive Lebesgue measure. This is
classically known for the $K$-NN-based test when $K$ is fixed
\cite{Schilling1986} (see also \cite{HenzePenrose1999}). Although the
benefits of allowing $K$ to grow with the sample size have often been
advocated \cite{ChenFriedman2017,generalgraph}, consistency in the
diverging-$K$ regime was established only recently by
\citet[Proposition 2.1]{Kanekar2025} for the directed $K$-NN test when
$K=o(n)$. In the appendix of
this paper, we prove universal consistency for the undirected $K$-NN
test under the same condition (see \cref{ppn:KNNconsistency}). We also illustrate the finite-sample performance of the test through
simulations in \cref{sec:experiments}.

\subsection{Remarks and Overview of Proofs}
\label{sec:proofoutline}

The fourth moment phenomenon was first discovered by \citet{NuPe05},
who showed that the convergence of the first, second, and fourth
moments to $0, 1$, and $3$, respectively, guarantees asymptotic
normality for a sequence of multiple stochastic Wiener-It\^o integrals
of fixed order.  Later, \citet{NoPe09,moment4_np} provided error
bounds for the fourth moment theorem. This phenomenon has since
emerged as a general principle governing Gaussian limits of nonlinear
functionals of random fields (see the book \cite{book_np} for an
introduction to the topic and the website
\url{https://sites.google.com/site/malliavinstein/home} for a
comprehensive list of the recent results). Related results for
degenerate $U$-statistics of fixed order were obtained earlier by de
Jong \cite{dejong87,de90}. In this setting, the fourth moment
condition generally requires an additional assumption controlling the
maximum influence of the underlying independent variables (see
\cite[Theorem~2.1]{dejong87}, \cite[Theorem~1]{de90}, and
\cite[Corollary~1.2]{DoeblerKrokowski2019}). For other classical
sufficient conditions for asymptotic normality and convergence rates
for quadratic forms, see
\cite{chatterjee_normal_approximation,gotze,gotze_applications,hall,rotar}
and the references therein. In the context of random graph coloring,
the fourth moment phenomenon was first observed for monochromatic edge
counts in the balanced case by \citet{BDM2017}, and later extended to
counts of more general monochromatic subgraphs in
\cite{color,fourthmomentH,fourthmomentinfluence} (see also
\cite{Doebler2024ProductSpaces,universalclt} for related results).

An instance of the third moment phenomenon was established by
\citet[Theorem~3.2]{NeufcourtViens2016} for normalized quadratic
variations of stationary Gaussian sequences under suitable covariance
assumptions. For these statistics, which belong to the second Wiener
chaos, both the third and fourth moment conditions characterize
asymptotic normality (see also \cite{KimPark2019} for related
results).  Our results, on the other hand, establish a third moment
characterization for monochromatic-edge counts on arbitrary graph
sequences under biased Bernoulli coloring.

\paragraph{The fixed $p$ regime:} We shall make remarks on each of the three cases of \cref{thm:Un}.

\begin{itemize}
\item For $p \in (0,1) \backslash \{\frac{1}{2}, \frac{1\pm
  1/\sqrt{3}}{2}\}$, the fourth moment characterization in
  \cref{thm:Un}$(i)$ is an instance of the broader phenomenon
  described above, but is new for biased graph colorings. The
  equivalent third moment characterization is a new phenomenon in its
  own right : convergence of an even lower-order moment suffices to
  determine Gaussian convergence.

\item At the exceptional probabilities $p=\frac{1\pm1/\sqrt3}{2}$,
  the coefficient of the $4$-star term in the fourth cumulant
  $\E{U_n^4}-3$ vanishes, while all remaining terms tend to zero (see
  \cref{lem:fm_pneqhalf}). The fourth moment therefore no longer
  detects the $4$-star configurations responsible for non-Gaussian
  behavior, which is why it can converge to $3$ while asymptotic
  normality fails, whereas the third moment, whose $4$-star
  coefficient does not vanish, continues to characterize Gaussian
  convergence.

\item The exceptional probabilities $p=\frac{1\pm1/\sqrt{3}}{2}$ have appeared before in \cite[Corollary~1.4 and Example~1.5]{DoeblerKrokowski2019}
in connection with fourth moment criteria for linear combinations
of independent biased Rademacher variables. Their significance
comes from the identity
$$\E{ \left( \frac{X-p}{\sqrt{p(1-p)}} \right)^4}-3 =\frac{1-6p(1-p)}{p(1-p)},$$
where $X\sim \mathrm{Ber}(p)$. Thus, precisely at these probabilities, the standardized Bernoulli variable has fourth moment $3$ despite being non-Gaussian. \citet{DoeblerKrokowski2019} establish a fourth moment criterion for
normalized linear combinations away from these values and show
that the exclusion is necessary. The same cancellation explains the exceptional probabilities in \cref{thm:Un}, at which the $4$-star contribution to the fourth
cumulant of $U_n$ vanishes. Our result identifies the exact graph condition for asymptotic normality of monochromatic edge counts, whose expansion in centered
variables contains both linear and quadratic terms under biased
colorings. At the exceptional probabilities, this condition is
equivalent to vanishing of the third moment, while the fourth moment condition holds automatically and therefore cannot characterize asymptotic normality. To the best of our knowledge, this provides the first example of a sequence of quadratic multilinear polynomials in independent biased Rademacher variables for which the third moment characterizes Gaussian convergence, even though the fourth moment criterion fails.   

\item Finally, in the balanced case $p=\frac{1}{2}$, the edge
  indicators $\{\bm{1}\{X_i=X_j\}\}_{\{i,j\}\in E(G_n)}$ are pairwise
  independent, and the centered statistic in \eqref{eq:Tn} is a
  Rademacher chaos of order two, that is, a quadratic form in
  i.i.d.\ standard Rademacher variables. Indeed, setting
  $R_i=2X_i-1\in\{\pm1\}$ for $1\leq i\leq n$, we obtain
\begin{align}\label{eq:Tnquadratic}
T_n-\E{T_n} =\frac{1}{4}\sum_{1\leq i,j\leq n}a_{ij}R_iR_j,
\end{align}
 where $a_{ij}=\bm{1}\{\{i,j\}\in E(G_n)\}$ and the variables $R_1,\ldots,R_n$ are i.i.d.\ standard Rademacher variables. As mentioned earlier, asymptotic normality of $U_n$ in this setting has been extensively studied, with \cref{thm:Un}~(iii) established in \cite[Theorem~1.4]{BDM2017} (see also \cite{rademacher,quadratic}). Here, the fourth moment condition, which requires the number of $4$-cycles to be negligible relative to its extremal order, is necessary and sufficient for asymptotic normality of $U_n$.  As discussed earlier, the third moment condition, however, is no longer sufficient. The third moment can converge to zero, even when asymptotic normality fails (see \cref{example:third}).    

\end{itemize}

We now provide a brief outline of the proofs in the fixed $p$ regime.  In \cref{sec:thftmph} we prove equivalence of the third and fourth moment
phenomena to their respective motif count conditions in
\cref{lem:thirdm_pneqhalf} and \cref{lem:fm_pneqhalf}. These lemma
follow by expanding the third and fourth centered moments of $T_n$ in
terms of subgraph counts and identifying the leading contributions in
each regime.  In  \cref{sec:wass}, we show convergence of $U_n$
to Normal distribution in the Wasserstein metric under the respective
motif count conditions proven in \cref{lem:thirdm_pneqhalf} and
\cref{lem:fm_pneqhalf}. For $p=\frac{1}{2}$ we reprove the well known
result in \cref{ppn:clthalf} and for unbalanced case in
\cref{thm:clt}$(i)$. The proof of \cref{thm:clt} is an application of
Stein's method and an overview is given in that section.  A uniform
integrability argument (see \cref{lem:momentUn}) ensures that
asymptotic normality of $U_n$ implies convergence of its third and
fourth moments to their Gaussian values.

We also note that, since we establish fourth or third-moment
phenomena, weak convergence is equivalent to convergence in the
Wasserstein or Kolmogorov distance. Finding the precise rate of
convergence in any metric remains an open question.

\paragraph{The sparse $p$ regime:}The proof strategy parallels the
fixed $p$ regime. In \cref{lem:thirdmomentsparse} we show that the
third moment condition is equivalent to a motif/$p_n$ condition which
is weaker than (that is, implied by) the motif/$p_n$ condition characterizing the fourth
moment condition in \cref{lem:fourthmomentsparse}; both lemmas are
proved in \cref{sec:thftmph}, and \cref{ppn:thirdfourthsparse} follows
immediately from them. In \cref{thm:clt}$(ii)$ we show convergence of
$U_n$ to the normal distribution in the Wasserstein metric under the
motif/$p_n$ condition of \cref{lem:thirdmomentsparse}, which yields
\cref{ppn:clthalf_sparse}. Although the overall strategy is the same
as for fixed $p$, it is considerably harder to execute, because the
moment bounds must now be controlled uniformly in the varying
$p_n$. This is explained in detail in \cref{sec:wass}.

\paragraph{Graph based two-sample tests:} 

Note that \cref{thm:cltH0} treats two cases: unbalanced sample sizes
with $\lambda_n=\frac{n_1}{n} \rightarrow \lambda\in(0,1) \backslash
\{\frac{1}{2}\}$, and equal sample sizes with $n_1=n_2$. It does not
cover the asymptotically balanced case, where $n_1\ne n_2$, but
$\lambda_n\to\frac12$. The conclusion of \cref{thm:cltH0} $(ii)$ remains
valid in this regime, but establishing this extension requires a more
careful argument, which we omit.

Recently, \citet{Kanekar2025} studied a related two-sample statistic
based on directed nearest-neighbor graphs in the Poissonized
setting. In particular, \cite[Theorem~3.1]{Kanekar2025} gives a central limit theorem
under $H_0$ for the corresponding test statistic in the diverging $K$
regime, under $K_n=o(n)$. For the undirected statistic
\eqref{eq:Un_pois} considered here, \cref{thm:cltH0} establishes
asymptotic normality for arbitrary $K_n$ in the unbalanced case
$\lambda\ne\frac12$, and for $K_n=o(n)$ in the balanced case
$n_1=n_2$. For related results on the asymptotic normality of
two-sample statistics based on general similarity graphs under the
fixed-sample-size permutation null, see \citet{ZhuChen2024}.

\subsection{Outline of the Paper}
\label{sec:outline}

In \cref{sec:thftmph} we express the third and fourth moments of $U_n$
in terms of motif counts and prove the equivalences between the
moment conditions and the motif conditions, in both the fixed $p$ and
the sparse regimes. In \cref{sec:wass} we prove convergence of $U_n$
to the normal distribution in the Wasserstein metric under these
motif conditions. \cref{sec:Unpf} combines these results to prove
\cref{thm:Un}, \cref{ppn:clthalf_sparse} and
\cref{ppn:thirdfourthsparse}, and \cref{sec:proofs-knn} contains the
proof of \cref{thm:cltH0}. The examples showing that the moment
conditions cannot be improved are collected in \cref{sec:examples},
and \cref{sec:experiments} reports simulations assessing the
finite-sample performance of the $K$-NN test. The appendix contains
the motif tables used in the proofs (\cref{app:tables}) and the proof
of universal consistency of the undirected $K$-NN test
(\cref{ppn:KNNconsistency}).

\paragraph{Acknowledgments:} SA was supported through a Knowledge Exchange Grant, under project
no. RTI4001 of the Department of Atomic Energy, Government of
India. BBB was supported by NSF CAREER grant DMS 2046393, a Sloan
Research Fellowship, and the National University of Singapore's PESS
and Provost's Chair funds. The authors thank Frank den Hollander and
Rajat Subhra Hazra who along with SA organized the ICTS-Networks Workshop in
April 06-17th, 2026 at the ICTS-TIFR in India, where the research in this paper
started. The authors also thank ICTS-TIFR for support and hospitality.

\paragraph{AI disclosure} 
AI tools (Claude Fable 5.1, GPT 5.6 Sol and GPT 6 Astra) were used to compute the motif tables, verify some of the mathematical arguments, and refine grammar and typography. The authors assume responsibility for all content.

\section{Convergence Criteria for Third and Fourth Moment of $U_n$}
\label{sec:thftmph}

In this section we obtain a characterization of the  third moment condition and fourth moment condition in all regimes in terms of motif counts. Let $(p_n)_{n\in\N}$ be a sequence such that $p_n\in (0,1)$, for all $n\in \N$. In the fixed $p$ regime $p_n = p \in (0, 1)$, for all $n\in \N$, and in the sparse regime $p_n \rightarrow 0$, as $n \rightarrow \infty$.
For each $e = \{i,j\} \in E(\Gn)$, denote  
\begin{equation}\label{eq:Ye}
    Y_e := \bm{1} \{X_i = X_j \} - q_n,
\end{equation}
where $q_n = \E{\bm{1} \{X_i = X_j \}} = p_n^2 + (1-p_n)^2$. Recall the definition of $T_n$  from \eqref{eq:Tn}. Denote 
\begin{equation}\label{eq:Sn}
    S_n := T_n - \E{T_n} = \sum_{e\in E(\Gn)} Y_e .
\end{equation}

\subsection{Criteria for Convergence of Third Moment of $U_n$}

First we compute the third moment of $S_n$. For this, expanding the sum in $S_n^3$ gives, 
\begin{align*}
    \E{S_n^3} = \E{\sum_{e_1,e_2,e_3 \in E(\Gn)}Y_{e_1}Y_{e_2}Y_{e_3}}.
\end{align*}
Since $\E{Y_e} = 0$ for all $e\in E(\Gn)$, and $Y_e$ is independent of $Y_{e'}$ whenever $e\cap e'=\varnothing$, the only non-vanishing summands are those for which each connected component of $e_1\cup e_2\cup e_3$ contains at least two edges, counting multiplicities. Grouping the summands according to the isomorphism type of
$e_1\cup e_2\cup e_3$, as in the proof of  \cref{lemma:Varlemma}, we can write
\begin{equation}\label{eq:H_moment_expansion}
    \E{S_n^3} = \sum_{H\in \mathcal H_3} c_H^{(3)}(u_n) \NH{H},
\end{equation}
where $u_n  = (2p_n - 1)^2$ and 
$\mathcal{H}_3:= \{\Ktwo,\Pthree,\Kthree, \Pfour,\Konethree\}$. 
 \cref{tab:third} in \cref{app:tables} gives the exact values of the coefficients $c_H^{(3)}(u_n)$ and the asymptotic order of the subgraph count associated with each motif.  Using this we   can characterize the third moment condition  $\E{U_n^3} \to 0$ in terms of the motif counts.

\paragraph{The Fixed $p$ Regime:}  We obtain equivalent condition for balanced case $p=\frac{1}{2}$ involving  the number of triangles and  number of edges, whereas in the unbalanced case we need to control the number of 3-stars w.r.t the number of edges and 2-stars.

\begin{lemma}\label{lem:thirdm_pneqhalf}
    Let $U_n$ be as defined in \eqref{eq:Un} and $\NH{\Ktwo} \rightarrow \infty$, as $n\to \infty$. Then the following hold. 
     \begin{enumerate}[label = $(\alph*)$]
        \item If $p \in (0, 1) \backslash \{\frac 12\}$, then
        \begin{equation} \label{eq:thirdmc_cond1}
            \E{U_n^3} \to 0  \quad \Longleftrightarrow \quad
            \NH{\Konethree} = o\left((\NH{\Ktwo}+\NH{\Pthree})^\frac{3}{2}\right).
        \end{equation}
            \item If $p = \frac{1}{2}$, then
        \begin{equation} \label{eq:thirdmc_phalf}
            \E{U_n^3} \to 0 \quad \Longleftrightarrow \quad
            \NH{\Kthree} = o\left(\NH{\Ktwo}^\frac{3}{2}\right).
        \end{equation}
    \end{enumerate}
\end{lemma}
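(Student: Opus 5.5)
The plan is to read off $\E{S_n^3}$ from the expansion \eqref{eq:H_moment_expansion} and the coefficients $c_H^{(3)}(u)$ in \cref{tab:third}, and to compare each motif term with $\Var{T_n}^{3/2}$ using \eqref{eq:var_term}, since $\E{U_n^3}=\E{S_n^3}/\Var{T_n}^{3/2}$.

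To identify the coefficients I write $Z_i=X_i-p$. A direct computation gives
\[
Y_e=(2p-1)(Z_i+Z_j)+2Z_iZ_j \qquad\text{for } e=\{i,j\}.
\]
A vertex appearing in only one of $e_1,e_2,e_3$ must therefore be absent from that factor. This gives the two key coefficients. For the $3$-star with centre $c$, only $(2p-1)^3Z_c^3$ survives, so
\[
c^{(3)}_{\Konethree}=6(2p-1)^3\E{Z^3}=-6(2p-1)^4p(1-p),
\]
which is nonzero iff $p\neq\frac12$. For the path $a\!-\!b\!-\!c\!-\!d$, the only surviving product is $(2p-1)Z_b\cdot 2Z_bZ_c\cdot(2p-1)Z_c$, so
\[
c^{(3)}_{\Pfour}=12(2p-1)^2p^2(1-p)^2>0.
\]
At $p=\frac12$ the linear part vanishes, $Y_e=2Z_iZ_j$, and only $\Ktwo$ and $\Kthree$ survive.

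\emph{Case $p=\frac12$.} Here $\E{S_n^3}=c_{\Ktwo}\NH{\Ktwo}+c_{\Kthree}\NH{\Kthree}$ with $c_{\Kthree}\neq0$, and $\Var{T_n}=\frac14\NH{\Ktwo}$. Since $\NH{\Ktwo}\to\infty$, the edge term is $o(\NH{\Ktwo}^{3/2})$, and \eqref{eq:thirdmc_phalf} follows at once.

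\emph{Case $p\neq\frac12$.} Now $\Var{T_n}\asymp V_n:=\NH{\Ktwo}+\NH{\Pthree}\to\infty$. The easy terms are handled as follows.
\begin{itemize}
\item The $\Ktwo$ and $\Pthree$ terms are $O(V_n)=o(V_n^{3/2})$.
\item Since each triangle contains three $2$-paths, $\NH{\Kthree}\le\NH{\Pthree}/3=o(V_n^{3/2})$.
\end{itemize}
For the $4$-paths, set $a_v=d_{G_n}(v)-1$. By Cauchy--Schwarz over ordered edges,
\[
\NH{\Pfour}\le\sum_{b\sim c}a_ba_c\le\sum_b d_{G_n}(b)\,a_b^2\lesssim \NH{\Konethree}+\NH{\Pthree}.
\]
Hence if $\NH{\Konethree}=o(V_n^{3/2})$, every term is $o(V_n^{3/2})$ and $\E{U_n^3}\to0$. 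This is the ``$\Leftarrow$'' direction.

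For ``$\Rightarrow$'' I argue by contradiction along a subsequence with $\NH{\Konethree}\ge\varepsilon V_n^{3/2}$. The main obstacle is that $c_{\Konethree}<0<c_{\Pfour}$, so the $4$-path term could a priori cancel the $3$-star term. I would first try to avoid comparing motif counts directly and instead use the decomposition $S_n=L_n+Q_n$, where
\[
L_n=(2p-1)\sum_i d_{G_n}(i)Z_i,\qquad Q_n=2\sum_{\{i,j\}\in E(G_n)}Z_iZ_j .
\]
Then
\[
\E{S_n^3}=\E{L_n^3}+3\E{L_n^2Q_n}+O(\E{Q_n^3}+\E{L_nQ_n^2}),
\]
where $\E{L_n^3}=(2p-1)^3\E{Z^3}\sum_i d_{G_n}(i)^3$ and $\E{L_n^2Q_n}=2(2p-1)^2p^2(1-p)^2\,a^{\top}Aa$ up to lower-order terms.

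The aim is to show, with the constants taken from \cref{tab:third}, that
\[
\bigl|6(2p-1)^4p(1-p)\textstyle\sum_i d_i^3\bigr|-12(2p-1)^2p^2(1-p)^2\,a^{\top}Aa\ \ge\ c\sum_i d_i^3-O(V_n)
\]
for some $c=c(p)>0$. This would follow from $a^{\top}Aa\le\sum_b d_b a_b^2$, provided the ratio of coefficients $2p(1-p)/(2p-1)^2$ is controlled; if it is not, I would refine the bound through the spectral estimate $\lambda_{\max}(A)\le\sqrt{2\NH{\Ktwo}}$. That would only give $O(V_n^{3/2})$ on its own, so it would have to be combined with a truncation separating high-degree vertices, which carry the $3$-star mass, from low-degree ones.

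If the direct inequality fails, the fallback is to use \cref{lem:momentUn}-type uniform integrability together with the Stein bound of \cref{thm:clt}. Along the bad subsequence, the large-degree vertices would produce a non-Gaussian linear component with a nonvanishing third cumulant, which would contradict $\E{U_n^3}\to0$.
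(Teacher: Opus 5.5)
\textbf{There is a genuine gap in the ``$\Rightarrow$'' direction for $p\neq\frac12$.}

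Your case $p=\frac12$ is sound, and so is your ``$\Leftarrow$'' direction for $p\neq\frac12$. Your coefficients for $\Konethree$ and $\Pfour$ agree with \cref{tab:third}.

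The problem is the cancellation you worry about between the $\Konethree$ and $\Pfour$ terms, which you never resolve. Your direct comparison $\NH{\Pfour}\le 3\NH{\Konethree}+\NH{\Pthree}$ bounds the $4$-path contribution by $\frac{9u(1-u)^2}{4}\NH{\Konethree}$. This is dominated by $|c^{(3)}_{\Konethree}|\NH{\Konethree}=\frac{3u^2(1-u)}{2}\NH{\Konethree}$ only when $u=(2p-1)^2>\frac35$, so it says nothing for $p$ near $\frac12$. The spectral-plus-truncation refinement is not carried out; as you note, the spectral bound alone gives only $O((\NH{\Ktwo}+\NH{\Pthree})^{3/2})$.

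The fallback does not work either. Uniform integrability (\cref{lem:momentUn}) transfers normality to moments, and \cref{thm:clt} gives sufficient conditions for normality, so neither can force $\E{U_n^3}\not\to0$. The ``nonvanishing third cumulant of the linear part'' is exactly what the cross term $3\E{L_n^2Q_n}$, i.e.\ the $\Pfour$ term, might cancel. So you are back to the same obstacle.

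\textbf{The missing idea.} There is no cancellation to fight. For every graph, the $4$-path count is of strictly smaller order than $(\NH{\Ktwo}+\NH{\Pthree})^{3/2}$, with no assumption on $\NH{\Konethree}$. Set $\delta_2=\sum_i d_{G_n}(i)^2=2(\NH{\Ktwo}+\NH{\Pthree})$, and split the edges according to whether $\min\{d_{G_n}(i),d_{G_n}(j)\}\le t$. This gives
\[
\NH{\Pfour}\le\sum_{\{i,j\}\in E(G_n)} d_{G_n}(i)\,d_{G_n}(j)\le t\,\delta_2+\frac12\Bigl(\sum_{i:\,d_{G_n}(i)>t} d_{G_n}(i)\Bigr)^2\le t\,\delta_2+\frac{\delta_2^2}{2t^2}.
\]
Choosing $t=\delta_2^{1/3}$ yields $\NH{\Pfour}\le\frac32\delta_2^{4/3}=o(\delta_2^{3/2})$.

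Combine this with your bounds on the $\Ktwo$, $\Pthree$ and $\Kthree$ terms. You get
\[
\E{U_n^3}=-\frac{3u^2(1-u)}{2}\,\frac{\NH{\Konethree}}{\sigma_n^3}+o(1).
\]
Since $u^2(1-u)\neq0$ for $p\neq\frac12$ and $\sigma_n^2\asymp\NH{\Ktwo}+\NH{\Pthree}$, both directions of \eqref{eq:thirdmc_cond1} follow at once.

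Your passing mention of a truncation separating high-degree from low-degree vertices points in this direction. It has to be applied to the $\Pfour$ term on its own, not used to compare $\Pfour$ against $\sum_i d_{G_n}(i)^3$.
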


\begin{proof}  
First, suppose $p \in (0,1) \backslash \{\frac{1}{2}\}$. In this case, recall \eqref{eq:varianceUn}, 
$$\sign^2\asymp \NH{\Ktwo}+\NH{\Pthree}.$$
Referring to the order of the motif counts in \cref{tab:third} it follows that, for all $H \in \mathcal H_3 \backslash \{\Pfour, \Konethree\}$, 
\begin{align}\label{eq:NHGnH}
N(H, G_n) = o\left((\NH{\Ktwo}+\NH{\Pthree})^{\frac{3}{2}}\right)  .  
\end{align}
For $H = \Pfour$, we use the following estimate: For any $t > 0$, 
\begin{align*}
N(H, G_n) \leq \sum_{\{i,j\}\in E(G_n)} d_{G_n}(i)  d_{G_n}(j) \le t\sum_{\{i,j\}\in E(G_n)}( d_{G_n}(i)+ d_{G_n}(j))
  +\frac12\left(\sum_{i:  d_{G_n}(i)>t} d_{G_n}(i)\right)^2 . 
\end{align*}
Denote $\delta_2:=\sum_{i=1}^n  d_{G_n}(i)^2$.  Note that $\sum_{\{i,j\}\in E(G_n)}( d_{G_n}(i)+ d_{G_n}(j))=\delta_2$ and $$\sum_{i:  d_{G_n}(i)>t} d_{G_n}(i) \le \frac1t\sum_{i:  d_{G_n}(i)>t} d_{G_n}(i)^2 \le \frac{\delta_2}{t}.$$
Then choosing $t=\delta_2^{\frac{1}{3}}$ gives, 
\begin{align}\label{eq:NHGnpath} 
\NH{\Pfour} \leq \frac{3}{2} \delta_2^{\frac{4}{3}} =o(\delta_2^{\frac{3}{2}}) = o\left((\NH{\Ktwo}+\NH{\Pthree})^\frac{3}{2}\right ) . 
\end{align}  
Combining \eqref{eq:NHGnH} and \eqref{eq:NHGnpath} with \eqref{eq:H_moment_expansion} and referring to  \cref{tab:third} gives, 
$$\E{U_n^3} = - \frac{3u^2(1-u)}{2}  \frac{\NH{\Konethree}}{\sigma_n^3}  + o(1),  $$
where $u = (2p-1)^2$. This proves the result in \eqref{eq:thirdmc_cond1}.

Next, suppose $p =\frac{1}{2}$, that is $u=0$. In this case, referring to  \cref{tab:third} gives $c_H^{(3)}(u)  = 0$, for all $H \in \mathcal H_3 \backslash \{\Kthree\}$. Hence, 
$$\E{U_n^3} = \frac{3}{4}  \frac{\NH{\Kthree}}{\sigma_n^3} .$$
This proves the result in \eqref{eq:thirdmc_phalf}, since, in this case, $\sign^2\asymp \NH{\Ktwo}$ (recall \eqref{eq:varianceUn}).  
\end{proof}

\paragraph{The Sparse $p$ Regime:} Here we show that the third moment condition is equivalent to a condition requiring that the rate of decay of $p_n$ and the number of $3$-stars compare suitably with the number of edges and $2$-stars.

\begin{lemma}\label{lem:thirdmomentsparse}
Let $\{\Gn\}_{n\in\N}$ be a sequence of graphs such that $\NH{\Ktwo} \to \infty$ and $U_n$ be as defined in \eqref{eq:Un}. Suppose $p= p_n \rightarrow 0$, as $n \rightarrow \infty$. 
Then the following are equivalent: 
\begin{enumerate}
\item[$(a)$] $\E{U_n^3} \to 0$. 
\item[$(b)$] $p_n(\NH{\Ktwo} + \NH{\Pthree}) \to \infty$ and $\NH{\Konethree} = o(p_n^{\frac{1}{2}}(\NH{\Ktwo} + \NH{\Pthree})^{\frac{3}{2}})$.  
\end{enumerate}    
\end{lemma}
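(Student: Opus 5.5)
The plan is to reuse the expansion \eqref{eq:H_moment_expansion}, now with $u_n=(2p_n-1)^2\to1$. The key fact is $1-u_n=4p_n(1-p_n)\asymp p_n$. Every coefficient in \cref{tab:third}, and in the variance formula \eqref{eq:var_term}, is a polynomial in $u_n$ that vanishes at $u=1$. So I will read off its order in $p_n$ from the power of $(1-u_n)$ it contains.

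\emph{Step 1: variance.} From \eqref{eq:var_term},
$$\sign^2=\tfrac{(1-u_n)(1+u_n)}{4}\NH{\Ktwo}+\tfrac{u_n(1-u_n)}{2}\NH{\Pthree}\asymp p_n\bigl(\NH{\Ktwo}+\NH{\Pthree}\bigr)=:p_n D_n .$$

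\emph{Step 2: coefficients.} For each $H\in\mathcal H_3$ I will expand $c^{(3)}_H(u_n)$ from \cref{tab:third} in powers of $1-u_n$.
\begin{itemize}
\item For the edge, $c^{(3)}_{\Ktwo}=\E{Y_e^3}=q_n(1-q_n)(1-2q_n)$. Since $1-q_n\asymp p_n$ and $1-2q_n\to-1$, this coefficient is $\asymp -p_n$.
\item For the $2$-path, $c^{(3)}_{\Pthree}$ (from $\E{Y_e^2Y_f}$ with $e,f$ sharing a vertex) should also be $\asymp -p_n$. I will check its sign from the table.
\item For the $3$-star, $c^{(3)}_{\Konethree}=-\tfrac{3u_n^2(1-u_n)}{2}\asymp -p_n$.
\item For $\Kthree$ and $\Pfour$, I expect the coefficients to be $O(p_n^2)$, i.e. to carry at least $(1-u_n)^2$.
\end{itemize}

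\emph{Step 3: negligible motifs.} I bound $\NH{\Pfour}\le\frac32\delta_2^{4/3}$ as in \eqref{eq:NHGnpath}. Using $\delta_2\asymp D_n$, this gives
$$\frac{p_n^2\NH{\Pfour}}{(p_nD_n)^{3/2}}\lesssim p_n^{1/2}D_n^{-1/6}\to0 .$$
For triangles, $\NH{\Kthree}\le \NH{\Pthree}\le D_n$, so their contribution is at most $p_n^{1/2}D_n^{-1/2}\to0$. If the $\Kthree$ or $\Pfour$ coefficient turns out to be only $O(p_n)$, I will use instead $\NH{\Pfour}=o(D_n^{3/2})$ together with the fact that the retained terms below are already comparable to $p_nD_n^{3/2}$. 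I have not verified this fallback in detail.

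\emph{Step 4: conclude.} After Step 3,
$$\E{U_n^3}=-\frac{\alpha_n p_n D_n+\beta_n p_n\NH{\Konethree}}{(p_nD_n)^{3/2}}+o(1),$$
with $\alpha_n,\beta_n$ bounded above and below by positive constants. Because both terms have the same (negative) sign, there is no cancellation. Hence $\E{U_n^3}\to0$ holds if and only if both $(p_nD_n)^{-1/2}\to0$ and $\NH{\Konethree}=o(p_n^{1/2}D_n^{3/2})$. This is exactly condition $(b)$.

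I expect the main obstacle to be Step 2. It requires verifying from \cref{tab:third} that the $\Ktwo$, $\Pthree$ and $\Konethree$ coefficients are all exactly of order $p_n$ and of the same sign, and that the $\Kthree$ and $\Pfour$ coefficients are of smaller order. The sign check is what rules out cancellation and makes the equivalence two-sided. The first thing I would try is to factor each table entry as $(1-u)$ times a polynomial and evaluate that polynomial at $u=1$.
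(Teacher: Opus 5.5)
Your proposal is correct and follows essentially the paper's route. \cref{tab:third} confirms that $c^{(3)}_{\Ktwo}\sim-2p_n$, that $c^{(3)}_{\Pthree}=c^{(3)}_{\Konethree}\sim-6p_n$, and that the $\Kthree$ and $\Pfour$ coefficients carry a factor $(1-u_n)^2$, so your fallback is never needed and your same-sign argument gives the equivalence. The only difference is cosmetic: the paper bounds $\NH{\Pfour}\le\frac12\sum_i d_{G_n}(i)^3=\NH{\Ktwo}+3\NH{\Pthree}+3\NH{\Konethree}$ and absorbs the $p_n^2$ terms into a multiplicative $1+O(p_n)$ factor, whereas your $\frac32\delta_2^{4/3}$ bound makes them an additive $o(1)$ term, which works equally well.
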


\begin{proof} Recall that $u_n=(2p_n-1)^2$, so that $u_n=1+O(p_n)$ and $1-u_n=4p_n+O(p_n^2)$. Referring to \cref{tab:third},
    \begin{align*}
        \E{S_n^3} = -2p_n\NH{\Ktwo}  -6p_n\left(\NH{\Pthree}+\NH{\Konethree}\right) + \mathcal Q_n  ,  
        \end{align*}
        where 
  $$|\mathcal Q_n | = O\left(p_n^2\left(\NH{\Ktwo}+\NH{\Pthree}+\NH{\Konethree}+\NH{\Pfour}+\NH{\Kthree}\right)\right).$$
To control the error term, note that each copy of $\Pfour$ has a unique middle edge. Hence,
    \begin{align*}
        \NH{\Pfour} = \sum_{\{i,j\}\in E(\Gn)}( d_{G_n}(i)-1)( d_{G_n}(j)-1) &\le \sum_{\{i,j\}\in E(\Gn)} d_{G_n}(i)  d_{G_n}(j) \\
        &\le \frac12\sum_{\{i,j\}\in E(\Gn)}( d_{G_n}(i)^2+ d_{G_n}(j)^2) \\
        &= \frac12\sum_{i=1}^n  d_{G_n}(i)^3 \\
        &= \NH{\Ktwo}+3\NH{\Pthree}+3\NH{\Konethree}.
    \end{align*}
Moreover, $3\NH{\Kthree}\le\NH{\Pthree}$. Hence, 
$$| \mathcal Q_n | = O \left(p_n^2 \left(\NH{\Ktwo}+3\NH{\Pthree}+3\NH{\Konethree}\right) \right).$$ 
This implies, 
    \begin{align*}
        \E{S_n^3} =-2p_n\left(\NH{\Ktwo}+3\NH{\Pthree}+3\NH{\Konethree}\right) (1+O(p_n)).
    \end{align*}
    Also, \eqref{eq:var_term} gives $\sign^2=2p_n(\NH{\Ktwo}+\NH{\Pthree})(1+O(p_n)).$
    Therefore,
    \begin{equation*}
  \E{U_n^3} =-\frac{1+O(p_n)}{\sqrt{2}}  \frac{\NH{\Ktwo}+3\NH{\Pthree}+3\NH{\Konethree}}
        {p_n^{\frac12} (\NH{\Ktwo}+\NH{\Pthree})^{\frac32}}.
    \end{equation*}
    This proves the result in \cref{lem:thirdmomentsparse}.  
     \end{proof}

\subsection{Criteria for Convergence of Fourth Moment of $U_n$}

Next, we consider the fourth moment of $S_n$. Expanding as above gives, 
\begin{align}\label{eq:H_moment_expansionfourth}
    \E{S_n^4} = \E{\sum_{e_1,e_2,e_3,e_4\in E(\Gn)}Y_{e_1}Y_{e_2}Y_{e_3}Y_{e_4}}.
\end{align}
As before, grouping the summands according to the isomorphism type of
$e_1\cup e_2\cup e_3\cup e_4$, as in the proof of  \cref{lemma:Varlemma}, we can write
\begin{equation}\label{eq:fourth_moment_expansion}
    \E{S_n^4} = \sum_{H\in \mathcal H_4} c_H^{(4)}(u_n) \NH{H},
\end{equation}
where
$\mathcal{H}_4:= \{\Ktwo,\Pthree,\Ktwo\sqcup\Ktwo,\Kthree, \Pfour,\Konethree,\Ktwo\sqcup\Pthree,\Pfive,\Konefour,\Tthreetwo,\Cfour,\Paw,\Pthree\sqcup\Pthree\}$. 
 \cref{tab:fourth} in \cref{app:tables} gives the exact values of the coefficients $c_H^{(4)}(u_n)$ and the asymptotic order of the subgraph count associated with each motif. Using this we   can characterize the fourth moment condition  $\E{U_n^4} \to 3$ in terms of the motif counts.

\paragraph{The Fixed $p$ Regime:}  We obtain that as long as the edges diverge the fourth moment convergence holds for the exceptional values  $p = \frac{1\pm 1/\sqrt{3}}{2}$. We obtain equivalent conditions for balanced case $p=\frac{1}{2}$ involving  the number of squares and  number of edges, whereas in the unbalanced case outside of the exceptional values  we need to control the number of 4-stars w.r.t the number of edges and 2-stars.

\begin{lemma}\label{lem:fm_pneqhalf}
    Let $U_n$ be as defined in \eqref{eq:Un} and $\NH{\Ktwo} \rightarrow \infty$, as $n\to \infty$. Then the following hold. 
    \begin{enumerate}[label = (\alph*)]
        \item If $p \in (0, 1) \backslash \{\frac 12, \frac{1\pm 1/\sqrt{3}}{2}\}$, then
        \begin{equation} \label{eq:fmc_cond1}
            \E{U_n^4} \to 3  \quad \Longleftrightarrow \quad
            \NH{\Konefour} = o\left((\NH{\Ktwo}+\NH{\Pthree})^2\right).
        \end{equation}
        \item If $p = \frac{1\pm 1/\sqrt{3}}{2}$, then, without any additional assumption,
        \begin{equation*}
            \E{U_n^4} \to 3 .
        \end{equation*}
        \item If $p = \frac{1}{2}$, then
        \begin{equation} \label{eq:fmc_phalf}
            \E{U_n^4} \to 3 \quad \Longleftrightarrow \quad
            \NH{\Cfour} = o\left(\NH{\Ktwo}^2\right).
        \end{equation}
    \end{enumerate}
\end{lemma}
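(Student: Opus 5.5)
The plan mirrors the proof of \cref{lem:thirdm_pneqhalf}. I start from the expansion \eqref{eq:fourth_moment_expansion} with the coefficients $c_H^{(4)}(u)$ of \cref{tab:fourth}, and compare it with $3\sigma_n^4$, where $\sigma_n^2$ is given by \eqref{eq:var_term}. Squaring \eqref{eq:var_term} produces terms in $\NH{\Ktwo}^2$, $\NH{\Ktwo}\NH{\Pthree}$ and $\NH{\Pthree}^2$. These I rewrite in motif counts: $\NH{\Ktwo}^2 = 2\NH{\Ktwo\sqcup\Ktwo}+2\NH{\Pthree}+\NH{\Ktwo}$, and products such as $\NH{\Ktwo}\NH{\Pthree}$ and $\NH{\Pthree}^2$ expand into sums over disconnected and overlapping configurations such as $\Ktwo\sqcup\Pthree$, $\Pthree\sqcup\Pthree$, $\Pfour$, $\Konethree$, $\Konefour$, $\Cfour$, $\Tthreetwo$, $\Pfive$ and $\Paw$. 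The point is that the disconnected motifs $\Ktwo\sqcup\Ktwo$, $\Ktwo\sqcup\Pthree$ and $\Pthree\sqcup\Pthree$ cancel exactly in $\E{S_n^4}-3\sigma_n^4$, since they correspond to the Gaussian pairings. This leaves the fourth cumulant $\kappa_4(S_n)=\sum_{H} d_H(u)\NH{H}$, a sum over connected motifs with at most four edges.

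Next I bound the connected motifs by $\sigma_n^4$. For $p\neq\frac12$ we have $\sigma_n^2\asymp \NH{\Ktwo}+\NH{\Pthree}\asymp\delta_2$, and
\begin{itemize}
\item $\Ktwo,\Pthree,\Kthree,\Pfour,\Konethree$ are $O(\delta_2^{3/2})=o(\delta_2^2)$, as in the proof of \cref{lem:thirdm_pneqhalf}, using $\NH{\Konethree}\le \sum_i d_{G_n}(i)^3\le \delta_2^{3/2}$;
\item $\Pfive$, $\Tthreetwo$, $\Paw$ and $\Cfour$ are $o(\delta_2^2)$.
\end{itemize}
For the second group I would use truncation arguments like \eqref{eq:NHGnpath}. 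For example, $\NH{\Cfour}\le \sum_{\{i,j\}}\mathrm{codeg}(i,j)^2$ and $\NH{\Pfive}\le \sum_{\{i,j\}} d_{G_n}(i)d_{G_n}(j)\cdot(\text{extra degree})$. In each case I split vertices by degree threshold $t=\delta_2^{\alpha}$. High-degree vertices number at most $\delta_2/t^2$ and carry total degree at most $\delta_2/t$; the low-degree part gains a factor $t$ over $\delta_2^{3/2}$-type bounds. A cleaner route for $\Cfour$ and $\Pfive$ is $\NH{\Cfour}\le \mathrm{tr}(A^4)\le \lambda_{\max}^2\,\mathrm{tr}(A^2)$, together with $\lambda_{\max}^2\le\max_i \sum_{j\sim i}d_{G_n}(j)$ and related walk counts. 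The remaining term is $\Konefour$, so
$$\E{U_n^4}-3=\frac{d_{\Konefour}(u)\,\NH{\Konefour}}{\sigma_n^4}+o(1).$$
Computing $d_{\Konefour}(u)$ is direct: conditional on the centre colour, the four leaf terms are i.i.d., and the cumulant reduces to the standardized Bernoulli fourth cumulant, which is proportional to $1-6p(1-p)$. Hence $d_{\Konefour}(u)$ is a nonzero constant multiple of $u^2(\dots)(3u-1)$, since $1-6p(1-p)=\frac{3u-1}{2}$. It is nonzero exactly when $u\notin\{0,\frac13\}$, that is, off $p=\frac12$ and $p=\frac{1\pm1/\sqrt3}{2}$. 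This gives (a) and (b); in (b) the only surviving term has zero coefficient.

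For (c), set $u=0$. Then by \eqref{eq:Tnquadratic} the sum is a Rademacher chaos, and $\kappa_4$ involves only motifs in which every vertex has even degree in the edge multiset. Among connected motifs with four edges this is only $\Cfour$, plus lower-order terms $\Ktwo,\Pthree$ that are $O(\NH{\Ktwo}^{3/2})$ relative to $\sigma_n^2\asymp\NH{\Ktwo}$. This yields $\E{U_n^4}-3= c\,\NH{\Cfour}/\NH{\Ktwo}^2+o(1)$ with $c>0$.

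The main obstacle is showing that $\Pfive$, $\Tthreetwo$ and $\Paw$ (and $\Cfour$ for $p\ne\frac12$) are genuinely $o(\delta_2^2)$ rather than merely $O(\delta_2^2)$. The first thing I would try is degree truncation with an optimized threshold. If that fails, the fallback is to note that any surviving $O(\sigma_n^4)$ contributions carry coefficients that must be checked against \cref{tab:fourth}. A second, sign-related check is that these terms enter $\kappa_4$ with coefficients that do not cancel the $\Konefour$ term.
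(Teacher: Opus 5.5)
Your argument for (a) and (b) is sound and follows the paper's route, phrased in cumulant language. Writing $\E{S_n^4}-3\sigma_n^4=\kappa_4(S_n)$ and using that joint cumulants vanish on tuples that split into vertex-disjoint groups gives exactly the cancellation of $\Ktwo\sqcup\Ktwo$, $\Ktwo\sqcup\Pthree$, $\Pthree\sqcup\Pthree$ that the paper gets from its counting identities. Your conditional computation is also right: $\kappa(Y_1,\dots,Y_4)=\kappa_4\big((2p-1)(X_0-p)\big)=u^2p(1-p)(1-6p(1-p))=\tfrac18u^2(1-u)(3u-1)$, and the $4!$ orderings give the paper's coefficient $3u^2(1-u)(3u-1)$. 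The step you call the main obstacle is not one, because your own walk bound closes it. Indeed $\max_i\sum_{j\sim i}d_{G_n}(j)\le\sqrt{\Delta(G_n)\,\delta_2}\le\delta_2^{3/4}$ gives $\NH{\Pfive}+\NH{\Tthreetwo}=O(\delta_2^{7/4})$ (this is \eqref{eq:NHtree}), and $\NH{\Cfour}+\NH{\Paw}\le 2\NH{\Pthree}\Delta(G_n)=O(\delta_2^{3/2})$.

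The gap is in (c). You treat the $\Pthree$ contribution to $\kappa_4(S_n)$ as lower order, asserting it is $O(\NH{\Ktwo}^{3/2})$. That bound is false. For the star $K_{1,m}$ one has $\NH{\Pthree}=\binom m2\sim\tfrac12\NH{\Ktwo}^2$ while $\NH{\Cfour}=0$. So a nonzero $\Pthree$ coefficient would add a term of order one to $\E{U_n^4}-3$, and \eqref{eq:fmc_phalf} would fail.

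What actually saves (c) is that this coefficient vanishes identically at $u=0$. Since $Y_e=\pm\tfrac12$, we have $Y_e^2\equiv\tfrac14$, and pairwise independence gives $\E{Y_eY_f}=0$ for $e\neq f$. Hence $\kappa(Y_e,Y_e,Y_f,Y_f)=\E{Y_e^2Y_f^2}-\E{Y_e^2}\E{Y_f^2}-2\E{Y_eY_f}^2=0$, and therefore $\kappa_4(S_n)=-\tfrac18\NH{\Ktwo}+\tfrac32\NH{\Cfour}$ exactly. This is what the paper obtains when the $\tfrac38\NH{\Pthree}$ from \cref{tab:fourth} cancels against the $-\tfrac38\NH{\Pthree}$ produced by $\binom{\NH{\Ktwo}}{2}=\NH{\Ktwo\sqcup\Ktwo}+\NH{\Pthree}$. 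You need this exact cancellation, not a size bound, to finish (c).
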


\begin{proof}
First, suppose $p \in (0,1) \backslash \{\frac{1}{2}\}$. In this case, recall \eqref{eq:varianceUn}, 
$$\sign^2\asymp \NH{\Ktwo}+\NH{\Pthree}.$$
Referring to the order of the motif counts in \cref{tab:fourth} and by arguments as in  \cref{lemma:Varlemma}, it follows that, for all $H \in \mathcal H_4 \backslash \{\Ktwo\sqcup\Ktwo, \Ktwo\sqcup\Pthree, \Pthree\sqcup\Pthree, \Konefour\}$, 
\begin{align}
\label{eq:smallmotifs}
N(H, G_n) = o\left((\NH{\Ktwo}+\NH{\Pthree})^{2}\right)  .  
\end{align}
Also, observe the following identities: $\binom{\NH{\Ktwo}}{2} = \NH{\Ktwo\sqcup\Ktwo} + \NH{\Pthree}$,  where $\sqcup$ denotes the disjoint union of two graphs,  
$$\NH{\Ktwo}\NH{\Pthree} = \NH{\Ktwo\sqcup\Pthree} + 2\NH{\Pthree} + 3\NH{\Konethree} + 2\NH{\Pfour} + 3\NH{\Kthree},$$
    and
$$\binom{\NH{\Pthree}}{2} = \NH{\Pthree\sqcup\Pthree} + 3\NH{\Konefour} + o\left((\NH{\Ktwo}+\NH{\Pthree})^2\right),$$
    where the $o(\cdot)$ collects the motifs on at most four edges spanned by two overlapping wedges other
    than $\Konefour$ (namely $\Konethree$, $\Pfour$, $\Kthree$, $\Cfour$, $\Paw$, $\Tthreetwo$, $\Pfive$),
    all of which are $o((\NH{\Ktwo}+\NH{\Pthree})^2)$ by \eqref{eq:smallmotifs}. Combining the above with \eqref{eq:smallmotifs} and by arguments as in  \cref{lemma:Varlemma}, we obtain from \eqref{eq:fourth_moment_expansion},  
    \begin{align}\label{eq:momentSn}
        \E{S_n^4} &= 3\left(\frac{1-u^2}{4}\NH{\Ktwo} + \frac{u(1-u)}{2}\NH{\Pthree}\right)^2 \nonumber \\
        &\quad + \left(\frac{3u^2(1-u)(3u+1)}{2} - 3\cdot\frac{3u^2(1-u)^2}{2}\right)\NH{\Konefour}
        + o(\sign^4)  \nonumber  \\  
        &= 3\sign^4 + 3u^2(1-u)(3u-1)\NH{\Konefour} + o(\sign^4).
    \end{align}
    Dividing by $\sign^4$ and using $\sign^4 \asymp (\NH{\Ktwo}+\NH{\Pthree})^2$ (recall \eqref{eq:varianceUn}), 
$$\frac{\E{S_n^4}}{\sign^4} = 3 + \frac{3u^2(1-u)(3u-1)\NH{\Konefour}}{\sign^4} + o(1).$$   Thus, provided $u \neq \frac{1}{3}$, that is, $p \ne \frac{1\pm 1/\sqrt{3}}{2}$, the necessary and sufficient condition for $\E{U_n^4} \to 3$ is $\NH{\Konefour} = o\left((\NH{\Ktwo}+\NH{\Pthree})^2\right)$. This proves (a).

    Next, suppose $p = \frac{1\pm 1/\sqrt{3}}{2}$. Then $u = \frac{1}{3}$, and the coefficient of $\NH{\Konefour}$ in \eqref{eq:momentSn} vanishes and $\E{U_n^4} \to 3$ with no additional condition. This proves (b).

    Finally, suppose $p = \frac{1}{2}$, that is, $u=0$. In this case, the result follows from \cite[Proposition 1.11]{quadratic}. We will give a short proof here, for the sake of completeness. In particular, referring to \cref{tab:fourth} it follows that 
    $c_H^{(4)}(u) = 0$, for all $H \in \mathcal H_4 \backslash \{ \Ktwo, \Pthree, \Ktwo\sqcup\Ktwo, \Cfour\}$. Also, recall from \eqref{eq:varianceUn} that $\sign^2 = \frac 14 \NH{\Ktwo}$. Hence, by \cref{tab:fourth}, 
       \begin{align*}
        \E{S_n^4} &= \frac{1}{16}\NH{\Ktwo} + \frac 38 \NH{\Pthree}
        + \frac 38 \left(\binom{\NH{\Ktwo}}{2} - \NH{\Pthree}\right) + \frac 32 \NH{\Cfour}\\
        &= \frac{3\NH{\Ktwo}^2 - 2\NH{\Ktwo}}{16} + \frac 32 \NH{\Cfour} \nonumber \\ 
        & = 3\sign^4(1+o(1)) + \frac32 \NH{\Cfour}.
    \end{align*}
   This implies that $\E{U_n^4}\to3$ if and only if $\NH{\Cfour} = o(\NH{\Ktwo}^2)$. This proves (c).  
\end{proof}

\paragraph{The Sparse $p$ Regime:} We have already stated the result for the sparse regime in \cref{sec:Un}. We  prove \cref{lem:fourthmomentsparse}.

\begin{proof}[Proof of \cref{lem:fourthmomentsparse}:]
Recall that $u_n=(1-2p_n)^2$ and $1-u_n=4p_n+O(p_n^2)$. We first consider the disconnected motifs in \cref{tab:fourth}. Counting pairs of edges, pairs consisting of an edge and a wedge, and pairs of distinct wedges according to their unions gives (as in the proof of \cref{lem:fm_pneqhalf}),  $\binom{\NH{\Ktwo}}{2} =\NH{\Ktwo\sqcup\Ktwo}+\NH{\Pthree}$, where $\sqcup$ denotes the disjoint union of two graphs, 
$$\NH{\Ktwo}\NH{\Pthree} =\NH{\Ktwo\sqcup\Pthree}+2\NH{\Pthree}+3\NH{\Konethree}
+2\NH{\Pfour}+3\NH{\Kthree},$$
and
\begin{multline*}
\binom{\NH{\Pthree}}{2}
=\NH{\Pthree\sqcup\Pthree}+3\NH{\Konethree}
+\NH{\Pfour}+3\NH{\Kthree}\\
+3\NH{\Konefour}+\NH{\Pfive}+\NH{\Tthreetwo}
+2\NH{\Cfour}+2\NH{\Paw}.
\end{multline*}
Substituting these identities into \eqref{eq:fourth_moment_expansion} and using \eqref{eq:var_term}, the terms involving products of motif counts cancel exactly upon subtracting $3\sign^4$. Expanding the remaining coefficients around $u_n=1$ gives
\begin{multline*}
\E{S_n^4}-3\sign^4
=p_n(1+O(p_n))
\left(2\NH{\Ktwo}+14\NH{\Pthree}
+36\NH{\Konethree}+24\NH{\Konefour}\right)\\
+o\left(p_n^2(\NH{\Ktwo}+\NH{\Pthree})^2\right).
\end{multline*}
This follows by observing that the errors in the coefficients of the four displayed motifs are $O(p_n^2)$, which are absorbed into the factor $1+O(p_n)$. Each remaining connected motif has coefficient $O(p_n^2)$ and count $o((\NH{\Ktwo}+\NH{\Pthree})^2)$, by the combinatorial estimates used in the proof of \cref{lem:fm_pneqhalf}.

Also, \eqref{eq:var_term} gives, 
$$
\sign^2
=2p_n(\NH{\Ktwo}+\NH{\Pthree})(1+O(p_n)).
$$
Dividing the preceding fourth-moment expansion by $\sign^4$ and using the non-negativity of the motif counts, we obtain
\begin{equation}\label{eq:c}
\E{U_n^4}\to3
\quad\Longleftrightarrow\quad
\frac{\NH{\Ktwo}+\NH{\Pthree}+\NH{\Konethree}+\NH{\Konefour}}
{p_n(\NH{\Ktwo}+\NH{\Pthree})^2}\to0.
\end{equation}
The condition on the right immediately implies $(b)$. Conversely, separating vertices of degree $3$ from those of degree at least $4$ gives, 
$$
\NH{\Konethree}
\le \NH{\Pthree}+4\NH{\Konefour}  .  
$$
Hence,
\begin{align*}
& \frac{\NH{\Ktwo}+\NH{\Pthree}+\NH{\Konethree}+\NH{\Konefour}}
{p_n(\NH{\Ktwo}+\NH{\Pthree})^2}  \nonumber \\ 
& \leq \frac{2}{p_n(\NH{\Ktwo}+\NH{\Pthree})}
+\frac{5\NH{\Konefour}}
{p_n(\NH{\Ktwo}+\NH{\Pthree})^2}.
\end{align*} 
Under $(b)$, both terms on the right tend to zero. The equivalence in \eqref{eq:c} therefore proves the result.
\end{proof}

\section{Wasserstein Convergence of $U_n$}
\label{sec:wass}
In this section we establish general conditions for the asymptotic
normality of $U_n$ in Wasserstein metric. The results are key
ingredients in the proofs of \cref{thm:Un} and
\cref{ppn:clthalf_sparse}. For this, we begin by recalling the
definition of the Wasserstein distance.

\begin{definition} 
Given two real-valued random variables $X$ and $Z$ with finite first moments, the Wasserstein distance between $X$ and $Z$ is defined as
$$\mathrm{Wass}(X,  Z):=\sup_{g \in\mathcal{L}} \left|\E{g(X) }  - \E{g(Z)} \right|,$$
where $\mathcal L$ is the collection of 1-Lipschitz functions from $\R \rightarrow \R$. 
\end{definition} 

In the balanced case $p=\frac{1}{2}$, the edge indicators $\{\bm{1}\{X_i=X_j\}\}_{\{i,j\}\in E(G_n)}$ are pairwise independent, so the variance is determined solely by the number
of edges (recall \eqref{eq:var_term}). In this case, using a quantitative  martingale central limit theorem (CLT), \citet{color} established asymptotic normality of $U_n$ with convergence rates in the Kolmogorov distance. The following result, which shows convergence in the Wasserstein distance, is an easy consequence of the result.  The proof  also gives an explicit rate of convergence.

\begin{prop}\label{ppn:clthalf}
  Let $\{\Gn\}_{n\in\N}$ be a sequence of graphs with $\NH{\Ktwo} \to \infty$ and $U_n$ be as defined in \eqref{eq:Un}. Suppose $p = \frac{1}{2}$ and
     $$\NH{\Cfour} = o\big(\NH{\Ktwo}^2\big).$$ Then $ \Wass\left(U_n, N(0,1)\right) \to 0.$
\end{prop}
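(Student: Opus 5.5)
The plan is to represent $S_n=T_n-\E{T_n}$ at $p=\frac12$ as the quadratic Rademacher form \eqref{eq:Tnquadratic}: with $R_i=2X_i-1$ and $A=(a_{ij})$ the adjacency matrix of $G_n$, we have $S_n=\frac12\sum_{i<j}a_{ij}R_iR_j$ and $\sign^2=\frac14\NH{\Ktwo}$. We then apply a quantitative CLT for homogeneous sums of degree two in independent Rademacher variables. The cleanest route is the fourth moment bound for Rademacher chaos from \cite{rademacher} (or the martingale CLT of \cite{color} mentioned just above). It controls the Wasserstein distance by $C\bigl(\sqrt{|\E{U_n^4}-3|}+\max_i \mathrm{Inf}_i\bigr)^{1/2}$ or a similar expression, where $\mathrm{Inf}_i=\sum_j a_{ij}/\NH{\Ktwo}=d_{G_n}(i)/\NH{\Ktwo}$ is the normalized influence of vertex $i$.

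The first step is to bound the fourth cumulant. By \cref{lem:fm_pneqhalf}(c), whose proof gives the exact identity $\E{S_n^4}-3\sign^4=-\frac{2}{16}\NH{\Ktwo}+\frac32\NH{\Cfour}$, we get $\E{U_n^4}-3=-\frac{2}{\NH{\Ktwo}}+\frac{24\NH{\Cfour}}{\NH{\Ktwo}^2}\to0$ under the hypothesis.

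The second step is to verify the influence condition, which I expect to be the main obstacle, since the fourth moment alone does not control the maximal influence for Rademacher chaos in general. Here it does, because $\max_i d_{G_n}(i)^2$ is dominated by the fourth cumulant of the quadratic form. Concretely, $\mathrm{tr}(A^4)=\sum_i (A^2)_{ii}^2+\dots \ge \sum_i d_{G_n}(i)^2$, and $\mathrm{tr}(A^4)=8\NH{\Cfour}+2\sum_i d_{G_n}(i)^2-2\NH{\Ktwo}$ up to the standard walk count. This is not directly small. The better approach is to use the spectral form: the fourth cumulant of a Rademacher quadratic form equals (up to constants) $\mathrm{tr}(A^4)-\sum_{i,j}a_{ij}^4$ minus diagonal corrections, which is exactly $c\,\NH{\Cfour}$ as computed. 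A direct influence bound is then the following. Since $\max_i d_{G_n}(i)\le \|A\|_{op}$ and $\|A\|_{op}^4\le \mathrm{tr}(A^4)$, it suffices to show $\mathrm{tr}(A^4)=o(\NH{\Ktwo}^2)$. We have $\mathrm{tr}(A^4)=8\NH{\Cfour}+4\NH{\Pthree}+2\NH{\Ktwo}$, which is not automatically $o(\NH{\Ktwo}^2)$, because a star has $\NH{\Pthree}\asymp\NH{\Ktwo}^2$. However, a star with $k$ leaves has $\max_i \mathrm{Inf}_i=1$ but is still asymptotically normal, being essentially a sum of $k$ independent Rademachers times a fixed sign. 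So the influence condition cannot be required; I would instead avoid influence-based theorems.

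Therefore the committed route is the martingale CLT of \cite{color}. Order the vertices, write $S_n=\sum_{j}R_j W_j$ with $W_j=\frac12\sum_{i<j}a_{ij}R_i$, which gives a martingale difference sequence. Apply a quantitative martingale CLT in Wasserstein distance (e.g.\ R\"ollin's or a Lindeberg-type bound). This requires (1) $\Var{\sum_j W_j^2}/\sign^4\to0$, and (2) a Lyapunov term $\sum_j\E{W_j^4}/\sign^4\to0$. Expanding both in motif counts gives, with a suitable vertex ordering, bounds of the form $C\bigl(\NH{\Cfour}+\NH{\Pthree}_{\text{ordered}}\bigr)/\NH{\Ktwo}^2$. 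The delicate point is that wedge contributions in (1) cancel, since $\sum_j W_j^2$ involves $R_iR_k$ for wedges $i,k<j$ with centered products, while (2) involves $\sum_j d^-(j)^2$, where $d^-(j)$ is the number of neighbours preceding $j$. Choosing the ordering by decreasing degree makes $\sum_j d^-(j)^2$ small; for a star, $d^-(j)\le1$. Bounding $\sum_j d^-(j)^2=o(\NH{\Ktwo}^2)$ under $\NH{\Cfour}=o(\NH{\Ktwo}^2)$ is the crux. Since the paper says this is "an easy consequence" of \cite{color}, I would ultimately import their Kolmogorov bound and convert it to Wasserstein: $U_n$ converges in distribution and $\E{U_n^2}=1$, and the bounded fourth moment gives uniform integrability of $|U_n|$, so the Wasserstein distance tends to zero.
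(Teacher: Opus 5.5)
Your final route is the paper's proof: import the Kolmogorov bound $C\bigl(\NH{\Ktwo}^{-1/2}+\NH{\Cfour}/\NH{\Ktwo}^{2}\bigr)^{1/5}$ of \cite[Theorem~1.4]{color} and upgrade it to Wasserstein convergence. The paper upgrades via the quantitative inequality \eqref{eq:wkbd} with $B=1$, which gives the explicit rate \eqref{eq:WUnZhalf} later reused for \cref{thm:cltH0}; your weak-convergence-plus-uniform-integrability step (for which $\E{U_n^2}=1$ already suffices) is correct for this proposition but gives no rate. The step you left open in the self-contained martingale sketch is in fact automatic. Under a decreasing-degree ordering, the $d^-(j)$ earlier neighbours of $j$ each have degree at least $d^-(j)$, so $d^-(j)\le\sqrt{2\NH{\Ktwo}}$ and hence $\sum_j d^-(j)^2\le\sqrt2\,\NH{\Ktwo}^{3/2}=o(\NH{\Ktwo}^2)$. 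This controls both the Lyapunov term and the wedge part $\sum_j\binom{d^-(j)}{2}$ of $\Var{\sum_j W_j^2}$, whose remaining part is $O(\NH{\Cfour})$, so that route closes as well.
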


\begin{proof}
Define $$\mathcal R_n:= \NH{\Ktwo}^{-\frac{1}{2}}+\frac{\NH{\Cfour}}{\NH{\Ktwo}^2}.$$
By \cite[Theorem 1.4]{color}, applied with two colours,
\begin{align}\label{eq:N}
\sup_{x\in\mathbb R}| \mathbb P(U_n \leq x ) -\Phi(x)| \le C \mathcal R_n^{\frac{1}{5}},    
\end{align}
where $C > 0$ is a constant and $\Phi$ is the standard normal distribution function. We will now invoke the well known bound of the Wasserstein distance in terms of the Kolmogorov distance (See{\cite[Proposition~A.1.2]{BCG2016}}). That is, for two random variables $X$ and $Y$ with second moments bounded by $B^2$, 
\begin{equation} \label{eq:wkbd} \mathrm{Wass}(X, Y) \leq 4B \sqrt{\sup_{t \in \R} |\mathbb P(X \leq t) - \mathbb P(Y \leq t) | }. \end{equation}

Since $\mathbb E[U_n^2]=\mathbb E[Z^2]=1$, where $Z\sim N(0,1)$, \eqref{eq:N} and   \eqref{eq:wkbd} gives, 
\begin{align}\label{eq:WUnZhalf}
\operatorname{Wass}(U_n,Z)\le 4\sqrt{C} \mathcal R_n^{\frac{1}{10}}.  
\end{align}
Under the hypothesis, $\mathcal R_n \rightarrow 0$ and the result follows. 
\end{proof}

The following result gives sufficient conditions for convergence of $U_n$ to the standard normal distribution in the Wasserstein distance, both when $p\ne\frac{1}{2}$ is fixed and when $p=p_n\to0$.

\begin{theorem}\label{thm:clt}
    Let $\{\Gn\}_{n\in\N}$ be a sequence of graphs with $\NH{\Ktwo} \to \infty$ and $U_n$ be as defined in \eqref{eq:Un}. 
  \begin{itemize}
   \item [(i)] Suppose $p \in (0, 1)\backslash \{\frac{1}{2}\}$ is fixed and
    \begin{align}\label{eq:cltfixed}
    \NH{\Konefour} = o((\NH{\Ktwo} + \NH{\Pthree})^2).
    \end{align}  
    Then $\Wass(U_n,N(0,1))\to 0.$
      \item[(ii)]Suppose $p = p_n \rightarrow 0$ such that  $p_n(\NH{\Ktwo} + \NH{\Pthree}) \to \infty$ and 
\begin{align}\label{eq:cltsparse}
\NH{\Konethree} = o(p_n^{\frac{1}{2}}(\NH{\Ktwo} + \NH{\Pthree})^\frac{3}{2}). 
\end{align}
    Then $\Wass(U_n,N(0,1))\to 0.$ 
\end{itemize}

    \end{theorem}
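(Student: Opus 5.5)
The plan is to reduce everything to a martingale central limit theorem with explicit error terms. The starting point is a Hoeffding-type expansion. Write $\xi_i:=X_i-p$ and use $\bm 1\{X_i=X_j\}=1-X_i-X_j+2X_iX_j$. A direct substitution gives
$$Y_{\{i,j\}}=(2p-1)(\xi_i+\xi_j)+2\xi_i\xi_j,$$
and therefore
$$S_n=L_n+Q_n,\qquad L_n:=(2p-1)\sum_{i=1}^n d_{G_n}(i)\xi_i,\qquad Q_n:=2\sum_{\{i,j\}\in E(G_n)}\xi_i\xi_j .$$
Here $\Var{L_n}=u\,p(1-p)\,\delta_2$ with $\delta_2=2\NH{\Ktwo}+2\NH{\Pthree}$, and $\Var{Q_n}=4p^2(1-p)^2\NH{\Ktwo}$, which recovers \eqref{eq:var_term}.

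Next, order the vertices $1,\dots,n$ and write $S_n=\sum_{i}W_i$ with
$$W_i:=\xi_i\Bigl((2p-1)d_{G_n}(i)+2\sum_{j<i,\,j\sim i}\xi_j\Bigr).$$
This is a martingale difference sequence with respect to $\mathcal F_i=\sigma(X_1,\dots,X_i)$. I would then invoke a quantitative martingale CLT in Wasserstein distance (for instance a Stein-method bound of Röllin type). It bounds $\Wass(U_n,N(0,1))$ by a constant times
$$\sigma_n^{-2}\Bigl(\Var{V_n}\Bigr)^{1/2}+\sigma_n^{-3}\sum_i\E{|W_i|^3},\qquad V_n:=p(1-p)\sum_i\Bigl((2p-1)d_{G_n}(i)+2\sum_{j<i,\,j\sim i}\xi_j\Bigr)^2 .$$
So it suffices to show both terms vanish under \eqref{eq:cltfixed}, respectively \eqref{eq:cltsparse}.

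\textbf{Third-moment (Lyapunov) term.} In case (i) I bound $\E{|W_i|^3}\lesssim d_{G_n}(i)^3+d_{G_n}(i)^{3/2}$ using Rosenthal's inequality for the inner sum. This gives $\sum_i\E{|W_i|^3}\lesssim \sum_i d_{G_n}(i)^3\lesssim \NH{\Ktwo}+\NH{\Pthree}+\NH{\Konethree}$. I then use Cauchy--Schwarz, $\sum d^3\le(\sum d^2\sum d^4)^{1/2}$, together with $\sum d^4\asymp \NH{\Konefour}+O(\sum d^3)$. Under \eqref{eq:cltfixed} this yields $\NH{\Konethree}=o(\sigma_n^3)$, and the term vanishes. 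In case (ii), $\E{|\xi_i|^k}\asymp p_n$ for every $k\ge 2$, so every moment bound carries one factor of $p_n$ per distinct vertex. The Lyapunov sum is then $\lesssim p_n(\NH{\Ktwo}+\NH{\Pthree}+\NH{\Konethree})$, and dividing by $\sigma_n^3\asymp p_n^{3/2}(\NH{\Ktwo}+\NH{\Pthree})^{3/2}$ gives exactly the two conditions in (ii).

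\textbf{Conditional variance term.} Expanding $V_n-\E{V_n}$ produces three pieces: a linear chaos $4(2p-1)p(1-p)\sum_j\xi_j\sum_{i>j,\,i\sim j}d_{G_n}(i)$, the diagonal terms $\sum_i\sum_{j<i,\,j\sim i}(\xi_j^2-\E{\xi_j^2})$, and an off-diagonal quadratic chaos in $\xi_j\xi_k$ indexed by common earlier neighbours. Their variances are bounded respectively by counts of the form $\sum_j(\sum_{i\sim j}d_{G_n}(i))^2$, by $\sum_j d_{G_n}(j)^2$, and by a $\Cfour$/$\Pthree$-type count. The last one is bounded using $\NH{\Cfour}\le\NH{\Pfour}=o(\delta_2^{3/2})$, as in \eqref{eq:NHGnpath}. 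The diagonal piece is $O(\delta_2)=o(\sigma_n^4)$.

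\textbf{Main obstacle.} The step I expect to be hardest is the linear piece. Its variance is a walk count of length three weighted by degrees, $\sum_j(\sum_{i\sim j}d_i)^2$, and it is not obviously $o(\sigma_n^4)$ from \eqref{eq:cltfixed} alone. My first attempt is to exploit the freedom in the vertex ordering: order the vertices by decreasing degree, so the inner sum only runs over later, lower-degree neighbours $i$ with $d_i\le d_j$. The sum is then at most $\sum_j d_j^2\cdot d_j^2=\sum_j d_j^4$, which is $\lesssim\NH{\Konefour}+o(\sigma_n^4)$. If this fails, the fallback is a subsequence argument on $\Var{L_n}/\sigma_n^2\to\alpha$. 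When $\alpha=1$, I would use the direct Lyapunov CLT for the independent sum $L_n$, whose Lyapunov ratio $\sum d^4/(\sum d^2)^2$ is exactly controlled by $\NH{\Konefour}$. Otherwise $\NH{\Pthree}=O(\NH{\Ktwo})$ and the degrees are effectively bounded in $\ell^2$, so the martingale bounds above close more easily.

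In case (ii) the same scheme applies, but every estimate must be tracked with explicit powers of $p_n$ uniformly in $n$. That is routine but delicate, because terms of different orders in $p_n$ have to be compared against $\sigma_n^4\asymp p_n^2(\NH{\Ktwo}+\NH{\Pthree})^2$.
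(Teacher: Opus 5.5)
Your proof is correct, but it takes a different route from the paper.

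\textbf{The paper's route.} The paper applies Stein's method for local dependence to the edge-indexed summands. It gets a Taylor remainder bounded by $\sigma_n^{-3}\sum_i d_{G_n}(i)^3$ (times $p_n$ in the sparse case), plus a fluctuation term driven by its $V_n=\sum_{e_1\cap e_2\neq\varnothing}Y_{e_1}Y_{e_2}$. For fixed $p$, $\Var{V_n}$ is computed from the ten-motif expansion in \cref{tab:varV}. In the sparse case, that variance bound is abandoned: $V_n$ is approximated in $L^1$ by $\sum_i d_{G_n}(i)^2X_i$ and handled by a degree truncation.

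\textbf{Your route.} Your vertex-exposure martingale gives the same Lyapunov quantity. The conditional variance $\sum_i\E{W_i^2\mid\mathcal F_{i-1}}$ splits into chaoses controlled only by $\sum_i d_{G_n}(i)^4$, $\sum_i d_{G_n}(i)^2$ and $\NH{\Pthree}+\NH{\Cfour}$, so no motif table is needed. Your decreasing-degree ordering does resolve the step you flagged, and the subsequence fallback is unnecessary. In fact the ordering is not needed either: writing $d_i:=d_{G_n}(i)$, for any ordering Cauchy--Schwarz and $d_i^2d_j\le\frac13(2d_i^3+d_j^3)$ give $\sum_j(\sum_{i\sim j}d_i)^2\le\sum_j d_j\sum_{i\sim j}d_i^2\le\sum_i d_i^4$.

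\textbf{Case (ii) is where your route pays off.} You left this case as routine, but it is actually easy. Because $\xi_i$ has been integrated out, the whole conditional variance carries the prefactor $p_n(1-p_n)$. Its linear piece therefore has variance at most a constant times $p_n^3\sum_i d_i^4\le p_n^3(\sum_i d_i^2)^2$. This is $o(\sigma_n^4)$ because $\sigma_n^2\asymp p_n\sum_i d_i^2$. The diagonal and off-diagonal pieces are likewise negligible, and none of this uses any hypothesis beyond $p_n\to0$. Only the Lyapunov term uses \eqref{eq:cltsparse} and $p_n(\NH{\Ktwo}+\NH{\Pthree})\to\infty$. By contrast, the paper's $V_n$ contains each hub's own colour with an $O(1)$ coefficient. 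In \cref{example:sparsenormal} its variance is of order $\sigma_n^4$, which is exactly why the paper needs the $L^1$/truncation argument.

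\textbf{What the paper's route buys.} It gives an explicit rate that is linear in $\sigma_n^{-2}\sqrt{\Var{V_n}}$ (\cref{remark:clt}), and this rate is used later for \eqref{eq:Wp}.

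\textbf{The quantitative bound you quote.} It is not standard in that form. R\"{o}llin's bound presumes normalized conditional variance and weights the third moments by the remaining variance. General martingale Wasserstein bounds are typically of square-root type in the conditional-variance error. This costs you nothing for the present statement:
\begin{itemize}
\item conditional Lindeberg follows from your Lyapunov bound;
\item the normalized conditional variance tends to $1$ in $L^2$;
\item so the classical martingale CLT gives $U_n\to N(0,1)$ in distribution;
\item and $\E{U_n^2}=1$ upgrades this to Wasserstein convergence.
\end{itemize}
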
 

The proof of \cref{thm:clt} uses Stein's method with local dependence neighborhoods determined by edges sharing a vertex. The resulting Wasserstein bound consists of two terms: a cubic remainder and a fluctuation term involving local quadratic products. We present these preliminary preparation in \cref{sec:prepw}. Controlling these terms requires third and fourth-order moment estimates for the summands of $T_n$, which translate into counts of graph motifs with up to four edges. For fixed $p\ne\frac{1}{2}$, combinatorial comparisons between these counts show that the stated $4$-star condition makes both error terms vanish (see \cref{sec:thmclthf}). The proof also provides an explicit bound for the Wasserstein distance in terms of motif counts (see \cref{remark:clt}). The sparse regime is more delicate, as the moment estimates and variance normalization must retain their precise dependence on $p_n$. Here, we approximate this term by a degree-weighted sum of independent color indicators and control its mean absolute deviation by separating vertices of small and large degree. This reduces the required estimates to the stated $3$-star condition, which, together with divergence of the variance, ensures that both Stein error terms vanish (see \cref{sec:thmcltsp}).

We now make a remark on the equivalence of \eqref{eq:cltfixed} and
\eqref{eq:cltsparse} when $p \ne \frac{1}{2}$ is fixed.  Note that
when $p_n\equiv p\in(0,1)$ is fixed, the condition
$p_n(\NH{\Ktwo}+\NH{\Pthree})\to\infty$ holds automatically from our
standing assumption $\NH{\Ktwo}\to\infty$. The remaining condition in
\eqref{eq:cltsparse} then reduces to
$$\NH{\Konethree} =o ((\NH{\Ktwo}+\NH{\Pthree})^{\frac{3}{2}}).$$
For fixed $p\ne\frac12$, this is equivalent to the condition in \eqref{eq:cltfixed}, by \cref{lem:4star3starbound} and \eqref{eq:starcount}. Hence, the $3$-star and $4$-star criteria when $p\ne\frac12$ is fixed, are equivalent, showing that the two cases of \cref{thm:clt} are consistent.

\subsection{Preliminaries for Proof of \cref{thm:clt}} 
\label{sec:prepw}

Let $(p_n)_{n\in\N}$ be a sequence such that $p_n\in (0,1)$, for all $n\in \N$. (In the fixed $p$ regime $p_n = p \in (0, 1) \backslash \{\frac{1}{2}\}$, for all $n\in \N$, and in the sparse regime $p_n \rightarrow 0$, as $n \rightarrow \infty$.) For each $e = \{i,j\} \in E(\Gn)$, recall $Y_e$ from \eqref{eq:Ye}. Then the normalised statistic $U_n$ (recall \eqref{eq:Un}) can be rewritten as
$$U_n = \frac{1}{\sign} \sum_{e\in E(\Gn)} Y_e , $$
where $\sigma_n^2 := \Var{T_n}$.  Note that $\E{Y_e} = 0$ and $\Var{Y_e} = q_n(1-q_n) = \frac{1}{4}(1-u_n^2)$. Further, $Y_e$ and $Y_{e'}$ are independent whenever $e\cap e'=\varnothing$, and for edges $e, e'$ sharing exactly one vertex $\Cov{Y_e}{Y_{e'}} = p_n^3+(1-p_n)^3 - q_n^2 = \frac{1}{4}(u_n-u_n^2)$. Hence,  
\begin{align}\label{eq:varianceUn}
    \sign^2 &= \sum_{e_1,e_2\in E(\Gn)} \Cov{Y_{e_1}}{Y_{e_2}}
    = \frac{1-u_n^2}{4}\NH{\Ktwo} + \frac{u_n-u_n^2}{2}\NH{\Pthree}, 
    \end{align}
where $u_n = (2p_n-1)^2$. (This is the same as the expression in \eqref{eq:var_term} with $u$ replaced by $u_n$, to highlight the dependence on $n$ in the sparse regime). 
We now use the Wasserstein distance bound obtained from Stein's operator (see, for example, \citet[Theorem 3.1]{Ross2011}): 
\begin{equation}\label{eq:steinsbd}
    \Wass(U_n,N(0,1)) \leq \sup_{f\in \mathcal{F}} \big|\E{U_n f(U_n)} - \E{f'(U_n)}\big|,
\end{equation}
where $\mathcal{F} = \{f : \R \rightarrow \R \text{ such that } \|f\|_\infty \le 1, \|f'\|_\infty \le \sqrt{\frac2\pi}, \|f''\|_\infty \le 2 \}$. Fix $f\in \mathcal{F}$ and for each edge $e =\{i,j\}\in E(\Gn)$, set
\begin{equation*}  
   U_e = \frac{1}{\sign} \sum_{\substack{e'\in E(\Gn)\\ e'\cap e=\emptyset}} Y_{e'}.
\end{equation*}
 Then
\begin{align}
    \E{U_nf(U_n)} - \E{f'(U_n)}
    &= \frac{1}{\sign}\sum_{e\in E(\Gn)} \E{Y_e f(U_n)} - \E{f'(U_n)} \notag\\
    &= \frac{1}{\sign}\sum_{e\in E(\Gn)} \E{Y_e(f(U_n)-f(U_e))} - \E{f'(U_n)}, \label{eq:a1_taylor}
\end{align}
where \eqref{eq:a1_taylor} follows from the fact $$\E{Y_e f(U_e)} = \E{Y_e}\E{f(U_e)} = 0,$$ as $Y_e$ is
independent of $U_e$. Adding and subtracting
$\frac1{\sign}\sum_{e\in E(\Gn)} \E{Y_e(U_n - U_e)f'(U_n)}$ on the right-hand side of \eqref{eq:a1_taylor},
we have
\begin{align}
      \E{U_nf(U_n)} - \E{f'(U_n)} &= \frac{1}{\sign}\sum_{e\in E(\Gn)}
        \E{Y_e(f(U_n)-f(U_e)-(U_n-U_e) f'(U_n))} \nonumber \\
        &\quad +\frac{1}{\sign}\sum_{e\in E(\Gn)} \E{Y_e (U_n-U_e) f'(U_n)} - \E{f'(U_n)}. \label{eq:instep}
\end{align}
Define 
\begin{align}\label{eq:Vn}
V_n := \sum_{\substack{e_1,e_2 \in E(\Gn)\\ e_1\cap e_2 \neq \varnothing}} Y_{e_1} Y_{e_2} = \sigma_n \sum_{e\in E(\Gn)} Y_{e} (U_n-U_e).  
\end{align}
Using the fact that $\E{V_n} = \sign^2$, we may rewrite \eqref{eq:instep} as 
\begin{align}
     & \E{U_nf(U_n)} - \E{f'(U_n)} \nonumber \\ 
     &= \frac{1}{\sign} \sum_{e\in E(\Gn)}  \E{Y_e(f(U_n)-f(U_e)-(U_n-U_e) f'(U_n))} + \E{f'(U_n)\left(\frac{V_n - \E{V_n}}{\sign^2}\right)} \nonumber \\
    &=: A_1 + A_2  ,  
    \label{eq:A1_A2}
\end{align}
where
$$A_1 := \frac{1}{\sign}  \sum_{e\in E(\Gn)} \E{Y_e(f(U_n)-f(U_e)-(U_n-U_e) f'(U_n))} $$
and 
$$A_2 :=  \E{f'(U_n)\left(\frac{V_n - \E{V_n}}{\sign^2}\right)}.$$
Hence, by \eqref{eq:steinsbd}, to prove \cref{thm:clt} it suffices to show that both $A_1$ and $A_2$ are $o(1)$, uniformly in $f\in\mathcal F$, in the respective settings.

\subsection{Proof of \cref{thm:clt} $(i)$}
\label{sec:thmclthf}
We will need two lemmas. One which bounds the number of $3$-stars in a graph in terms of the numbers of $2$-stars and $4$-stars, and a second which computes the variance of $V_n$ as defined in \eqref{eq:Vn}.

\begin{lemma}\label{lem:4star3starbound}
For every finite simple graph $G_n$,
\begin{equation}\label{eq:4star3starbound}      
\NH{\Konethree}\le \NH{\Pthree} + \sqrt{\frac 83 \NH{\Pthree} \NH{\Konefour}}.
\end{equation}  
\end{lemma}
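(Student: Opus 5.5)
Write $d_i= d_{G_n}(i)$, so that $\NH{\Pthree}=\sum_i\binom{d_i}{2}$, $\NH{\Konethree}=\sum_i\binom{d_i}{3}$ and $\NH{\Konefour}=\sum_i\binom{d_i}{4}$. Star counts are sums of vertex-wise contributions, so the plan is to prove a pointwise inequality in the degree $d$ and then aggregate it with Cauchy--Schwarz. The key observation is the factorization
$$\binom{d}{3}^2=\frac{d^2(d-1)^2(d-2)^2}{36}, \qquad \binom{d}{2}\binom{d}{4}=\frac{d^2(d-1)^2(d-2)(d-3)}{48}.$$
Hence for $d\ge 3$,
$$\frac{\binom{d}{3}^2}{\binom{d}{2}\binom{d}{4}}=\frac{48}{36}\cdot\frac{d-2}{d-3}=\frac43\cdot\frac{d-2}{d-3}\le \frac83$$
whenever $d\ge 4$, since $\frac{d-2}{d-3}\le 2$ for $d\ge4$.

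Accordingly, the sum $\sum_i\binom{d_i}{3}$ splits into two parts.

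For vertices with $d_i=3$, use $\binom{3}{3}=1\le 3=\binom{3}{2}$. Vertices with $d_i\le 2$ contribute zero. These low-degree vertices therefore contribute at most $\sum_i\binom{d_i}{2}\le\NH{\Pthree}$.

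For vertices with $d_i\ge4$, the pointwise bound gives $\binom{d_i}{3}\le\sqrt{\tfrac83\binom{d_i}{2}\binom{d_i}{4}}$. Cauchy--Schwarz then yields
$$\sum_{i:d_i\ge4}\binom{d_i}{3}\le\sqrt{\tfrac83}\Big(\sum_i\binom{d_i}{2}\Big)^{1/2}\Big(\sum_i\binom{d_i}{4}\Big)^{1/2}=\sqrt{\tfrac83\NH{\Pthree}\NH{\Konefour}}.$$

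Adding the two contributions gives \eqref{eq:4star3starbound}.

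No step is a serious obstacle; the only point needing care is the small-degree case. The ratio $\frac{d-2}{d-3}$ blows up at $d=3$, where $\binom{3}{4}=0$. This is exactly why the additive $\NH{\Pthree}$ term is needed and why the constant $\frac83$ is attained at $d=4$.
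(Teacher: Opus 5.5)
Your proposal is correct and follows essentially the same route as the paper. Both split off degree-$3$ vertices, which are bounded by $\NH{\Pthree}$, and handle degree-$\ge 4$ vertices via Cauchy--Schwarz using the ratio $\frac43\cdot\frac{d-2}{d-3}\le\frac83$. The difference is only the order of steps: you apply the pointwise bound $\binom d3^2\le\frac83\binom d2\binom d4$ before Cauchy--Schwarz, while the paper writes $\binom d3=\frac{d-2}{3}\binom d2$, applies Cauchy--Schwarz first, and then bounds $\binom d2\frac{(d-2)^2}{9}\le\frac83\binom d4$.
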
 

\begin{lemma}\label{lemma:Varlemma}
  Let $\{\Gn\}_{n\in\N}$ be a sequence of graphs with $\NH{\Ktwo} \to \infty$. Let $V_n$ be as defined in \eqref{eq:Vn} and $p\in (0,1)\backslash\{ \frac{1}{2}\}$. 
\begin{equation} \label{eq:VarVbd}
                \Var{V_n} = c_{K_{1,4}} \NH{\Konefour} + o\left((\NH{\Ktwo}+\NH{\Pthree})^2\right)  , 
            \end{equation}
            where $c_{K_{1,4}}= 6u^3(1-u)$ and $u=(2p-1)^2$.  
\end{lemma}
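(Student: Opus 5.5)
The plan is to expand $\Var{V_n}$ as a sum over pairs of ``local pairs'' of edges, group the terms by motif type, read off the exact coefficient of the $4$-star, and show that every other motif count is $o\big((\NH{\Ktwo}+\NH{\Pthree})^2\big)$. Throughout I write $\delta_2=\sum_i d_{G_n}(i)^2$ and use that $\delta_2=2\NH{\Ktwo}+2\NH{\Pthree}\asymp \NH{\Ktwo}+\NH{\Pthree}$.

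\textbf{Step 1 (expansion and reduction to connected motifs).} First I rewrite $V_n=\sum_{e}Y_e^2+2\sum_{\{e_1,e_2\}\in\Pthree}Y_{e_1}Y_{e_2}$, where the second sum runs over wedges. Then
$$\Var{V_n}=\sum_{(a,b),(c,d)}\Cov{Y_aY_b}{Y_cY_d},$$
with the sum over pairs of intersecting (possibly equal) edges. If $\{a,b\}$ and $\{c,d\}$ are vertex-disjoint, the two products are functions of disjoint sets of colours, so the covariance vanishes. Hence only terms whose union $a\cup b\cup c\cup d$ is connected survive, and these unions have at most four edges. Grouping by isomorphism type gives
$$\Var{V_n}=\sum_{H}c_H(u)\NH{H},\qquad H\in\{\Ktwo,\Pthree,\Kthree,\Pfour,\Konethree,\Konefour,\Pfive,\Tthreetwo,\Cfour,\Paw\},$$
where each $c_H(u)$ is an explicit polynomial in $u$. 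These coefficients are computed as in \cref{tab:fourth}, by evaluating mixed moments of $Y_e$ over the colour configurations of $H$.

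\textbf{Step 2 (the $4$-star coefficient).} A copy of $\Konefour$ arises exactly when $\{a,b\}$ and $\{c,d\}$ are two wedges with the same centre and four distinct edges. There are $3$ ways to split four edges into two wedges, and the factor $2\cdot 2$ from $V_n$ appears together with ordering. The covariance equals $\E{Y_aY_bY_cY_d}-\big(\frac{u-u^2}{4}\big)^2$. Conditioning on the centre's colour makes the four $Y$'s conditionally independent. Each then has conditional mean $\pm\frac{\sqrt u}{2}\sqrt{1-u}$-type quantities, with the sign depending on the centre colour. Computing the mixed moment this way, I expect the total to collapse to $c_{K_{1,4}}=6u^3(1-u)$. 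This is a routine calculation, which I will check against the $\Konefour$ coefficient in \cref{tab:fourth}.

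\textbf{Step 3 (negligibility of the remaining motifs; main obstacle).} I need $\NH{H}=o(\delta_2^2)$ for every other $H$, using only $\NH{\Ktwo}\to\infty$.

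The easy cases go as follows.
\begin{itemize}
\item $\NH{\Ktwo},\NH{\Pthree}\le\delta_2=o(\delta_2^2)$.
\item $\NH{\Kthree}\le\NH{\Pthree}$.
\item $\NH{\Konethree},\NH{\Pfour}\lesssim\sum_i d_{G_n}(i)^3\le\delta_2^{3/2}$, as in the proof of \cref{lem:thirdmomentsparse}.
\item Since every codegree is at most $\max_i d_{G_n}(i)\le\sqrt{\delta_2}$, we get $\NH{\Cfour}\lesssim\sqrt{\delta_2}\,\NH{\Pthree}=O(\delta_2^{3/2})$.
\item Similarly $\NH{\Paw}\le \max_i d_{G_n}(i)\cdot\NH{\Kthree}=O(\delta_2^{3/2})$.
\end{itemize}

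The delicate motifs are $\Pfive$ and $\Tthreetwo$. Here the naive bound ``a $3$-edge motif times the maximum degree'' only gives $O(\delta_2^2)$, not $o(\delta_2^2)$. For these I will adapt the truncation argument used for $\Pfour$ in \cref{lem:thirdm_pneqhalf}.

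For $\Pfive$, counting by the middle vertex gives
$$\NH{\Pfive}\le\sum_c\Big(\sum_{b\sim c}d_{G_n}(b)\Big)^2.$$
Fix a threshold $t$. Split each inner sum into neighbours with $d_{G_n}(b)\le t$, which contribute at most $t\,d_{G_n}(c)$, and neighbours with $d_{G_n}(b)>t$. The first part contributes at most $t^2\delta_2$. For the second part, the total number of edges incident to high-degree vertices is at most $\delta_2/t$. I will bound it by
$$\sum_c\Big(\sum_{b\sim c,\ d_{G_n}(b)>t}d_{G_n}(b)\Big)^2\le\Big(\sum_{b:\,d_{G_n}(b)>t}d_{G_n}(b)^2\Big)\cdot\max_c(\cdots)$$
or by a Cauchy--Schwarz variant. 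Choosing $t=\delta_2^{\alpha}$ for a suitable $\alpha\in(0,\tfrac12)$ should give $\NH{\Pfive}=O(\delta_2^{2-\epsilon})$.

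For $\Tthreetwo$, the count is $\sum_{\{c,m\}\in E(G_n)}\binom{d_{G_n}(c)-1}{2}(d_{G_n}(m)-1)$, and the same high/low degree split on $m$ applies. If a fixed power saving fails, I will fall back on a compactness argument. Along a subsequence where $\NH{H}/\delta_2^2$ stays bounded below, the degree mass must concentrate on $O(1)$ vertices of degree $\asymp\sqrt{\delta_2}$, and for such graphs these two motifs are forced to be small. I expect this step to be the main difficulty.

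Combining Steps 1--3 then yields \eqref{eq:VarVbd}.
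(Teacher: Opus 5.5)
Your plan is correct and follows the same structure as the paper's proof. You expand $\Var{V_n}$ over connected unions of two intersecting edge pairs, isolate the $4$-star coefficient, and show that every other motif count is $o((\NH{\Ktwo}+\NH{\Pthree})^2)$.

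The one real difference is in Step 3. You measure everything against the single scale $\delta_2=2(\NH{\Ktwo}+\NH{\Pthree})$, using $\max_i d_{G_n}(i)\le\sqrt{\delta_2}$. The paper instead proves $\NH{\Pfive}+\NH{\Tthreetwo}=O(\NH{\Pthree}^{7/4})$ and then treats the cases $\NH{\Ktwo}=o(\NH{\Pthree})$ and $\NH{\Pthree}\le C\NH{\Ktwo}$ separately. Your uniform bookkeeping removes that case split, and with it the subsequence argument needed to make the two cases exhaustive.

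Your truncation for $\Pfive$ does close. The high-degree part is at most $\big(\sum_{b:\,d_{G_n}(b)>t}d_{G_n}(b)^2\big)\cdot\delta_2/t\le\delta_2^2/t$, and the low-degree part is at most $t^2\delta_2$. Taking $t=\delta_2^{1/3}$ gives $O(\delta_2^{5/3})$.

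No truncation is actually needed, though. Cauchy--Schwarz gives $\sum_{b\sim c}d_{G_n}(b)\le\sqrt{d_{G_n}(c)\,\delta_2}\le\delta_2^{3/4}$. Since $\sum_c\sum_{b\sim c}d_{G_n}(b)=\delta_2$, this yields $\NH{\Pfive}\le\delta_2^{7/4}$ and $\NH{\Tthreetwo}\le\delta_2^{3/4}\sum_c\binom{d_{G_n}(c)}{2}\le\delta_2^{7/4}$. This is the paper's argument with $\delta_2$ in place of $\NH{\Pthree}$, so you can drop the compactness fallback.

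Step 2, however, is wrong as described. Given the centre colour $X_v$, the four star variables are conditionally independent with conditional mean $m(X_v)$, where $m(1)=(1-p)(2p-1)$ and $m(0)=-p(2p-1)$. These are not of the form $\pm\mu$. Hence $\E{Y_{e_1}Y_{e_2}Y_{e_3}Y_{e_4}}=\E{m(X_v)^4}$ and $\E{Y_{e_1}Y_{e_2}}=\E{m(X_v)^2}=\frac{u(1-u)}{4}$. The covariance is therefore $\Var{m(X_v)^2}=p(1-p)\big(m(1)^2-m(0)^2\big)^2=\frac{u^3(1-u)}{4}$.

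With symmetric means $\pm\mu$ this covariance would vanish. The $4$-star term comes exactly from $|m(1)|\ne|m(0)|$ when $p\ne\frac12$. Multiplying by the $4!=24$ ordered tuples gives $6u^3(1-u)$.

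The cross-check you propose would also fail. The $4$-star entry $\frac{3u^2(1-u)(3u+1)}{2}$ of \cref{tab:fourth} is the coefficient in $\E{S_n^4}$, not in $\Var{V_n}$. The coefficients you need are those in \cref{tab:varV}.
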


We shall defer the proof of this lemma to the end of this section and prove \cref{thm:clt}$(i)$ assuming the above result.

\begin{proof}[Proof of {\rm \cref{thm:clt}$(i)$:}]
We now proceed with the proof of   \cref{thm:clt} $(i)$. We first estimate $A_2$. By \eqref{eq:A1_A2}, the Cauchy--Schwarz inequality, the fact that $f\in \mathcal{F}$,  and  \cref{lemma:Varlemma}, 
\begin{equation}\label{eq:A2bound}
|A_2| \le \frac{\|f'\|_\infty}{\sign^2}\sqrt{\Var{V_n}}
\le \frac{1}{\sign^2} \sqrt{O(\NH{\Konefour}) + o\left((\NH{\Ktwo}+\NH{\Pthree})^2\right)}.
\end{equation}
In this case, recalling \eqref{eq:varianceUn} 
\begin{align}\label{eq:variancepfixed}
\sign^2 \asymp \NH{\Ktwo}+\NH{\Pthree}.
\end{align}
Using \eqref{eq:variancepfixed} and the assumption $\NH{\Konefour} = o((\NH{\Ktwo}+\NH{\Pthree})^2)$ in \eqref{eq:A2bound} we have $|A_2| = o(1)$.

Next, we consider $A_1$. By a Taylor expansion,
\begin{align}
    |A_1|
    & \leq  \frac{1}{\sign} \cdot \frac{\|f''\|_\infty} {2} \cdot \sum_{e\in E(\Gn)} \E{|Y_e|\cdot(U_n-U_e)^2}
    \leq \frac{1}{\sign}\sum_{e\in E(\Gn)} \E{|Y_e|\cdot(U_n-U_e)^2}. \label{eq:A1_expand}
\end{align}
Recall that $ d_{G_n}(i)$ is the degree of the $i$-th vertex of $G_n$, for $1 \leq i \leq n$. Then the sum in the right-hand side of \eqref{eq:A1_expand} can be expressed as: 
\begin{align*}
\sum_{e\in E(\Gn)} \E{ |Y_e|\cdot(U_n-U_e)^2}
&=  \frac{1}{\sign^2} \sum_{e\in E(\Gn)} \E{|Y_e|\cdot\left(\sum_{\substack{e'\in E(\Gn)\\ e'\cap e \neq \varnothing}}Y_{e'}\right)^2} \\
&\le  \frac{1}{\sign^2}  \sum_{e\in E(\Gn)} \big|\{e'\in E(\Gn) : e'\cap e \neq \varnothing\}\big|^2 \\
&\le  \frac{1}{\sign^2}  \sum_{\{i,j\}\in E(\Gn)} ( d_{G_n}(i) +  d_{G_n}(j))^2 \\
& \le  \frac{2}{\sign^2}  \sum_{\{i,j\}\in E(\Gn)} ( d_{G_n}(i)^2 +  d_{G_n}(j)^2)   = \frac{2}{\sign^2} \sum_{i=1}^n  d_{G_n}(i)^3,
\end{align*}
where the first inequality uses $|Y_e|\le 1$, for all $e$. Substituting this back into
\eqref{eq:A1_expand} gives, 
\begin{align*}
    |A_1| &\le \frac{2}{\sign^3} \sum_{i=1}^n  d_{G_n}(i)^3 \nonumber\\
    &= \frac{2}{\sign^3} \sum_{i=1}^n \left(6\binom{ d_{G_n}(i)}{3} + 6\binom{ d_{G_n}(i)}{2} +  d_{G_n}(i)\right) \nonumber\\
    &= \frac{1}{\sign^3}\left(12\NH{\Konethree} + 12\NH{\Pthree} + 4\NH{\Ktwo}\right)\nonumber\\
    &\leq  \frac{1}{\sign^3}\left(24 \NH{\Pthree} + 12\sqrt{\tfrac83\NH{\Pthree}\NH{\Konefour}}  + 4\NH{\Ktwo} \right), 
\end{align*}  
where the last inequality follows from \cref{lem:4star3starbound}.  Using \eqref{eq:variancepfixed} and the assumption $\NH{\Konefour} = o((\NH{\Ktwo}+\NH{\Pthree})^2)$ gives $|A_1| = o(1)$. This completes the proof of   \cref{thm:clt} $(i)$.  
\end{proof}

\begin{remark}\label{remark:clt}
By tracking the constants in the proof of \cref{thm:clt}$(i)$ and replacing the asymptotic estimate in \cref{lemma:Varlemma} with a quantitative bound on $\Var{V_n}$, one obtains the following rate: For every fixed
$p\in(0,1)\backslash\{\frac12\}$,
\begin{align}\label{eq:WNUndense}
\Wass(U_n, N(0,1)) \le C_p\left( \frac{1}{\sqrt{\NH{\Ktwo}+\NH{\Pthree}}} +  \frac{\sqrt{\NH{\Konefour}}}{\NH{\Ktwo}+\NH{\Pthree}} \right)  ,
\end{align}
for some constant $C_p>0$ depending only on $p$. This bound holds for every $n$ such that $G_n$ has at least one edge. 
\end{remark}

To complete the proof of \cref{thm:clt}$(i)$ we prove \cref{lemma:Varlemma} and \cref{lem:4star3starbound}.

\begin{proof}[Proof of {\rm \cref{lemma:Varlemma}:}] Recall the definition of $V_n$ from \eqref{eq:Vn}.  Expanding the variance
    of $V_n$ gives
    \begin{equation} \label{eq:A2exp}
        \Var{V_n} = \sum_{\substack{e_1,e_2 \in E(\Gn)\\ e_1\cap e_2 \neq \varnothing}}
        \sum_{\substack{e_3,e_4 \in E(\Gn)\\ e_3\cap e_4 \neq \varnothing}} \Cov{Y_{e_1}Y_{e_2}}{Y_{e_3}Y_{e_4}}.
    \end{equation}
    Note that the covariance vanishes unless $(e_1 \cup e_2)\cap (e_3 \cup e_4) \neq \varnothing$, hence the sum in \eqref{eq:A2exp} runs over ordered $4$-tuples of edges $(e_1,e_2,e_3,e_4)$ such that $e_1\cap e_2 \neq \varnothing$,
    $e_3\cap e_4\neq \varnothing$ and $(e_1 \cup e_2)\cap (e_3 \cup e_4) \neq \varnothing$. We split the    summation in \eqref{eq:A2exp} according to the isomorphism type of the subgraph
    $e_1\cup e_2\cup e_3\cup e_4$. The possible subgraphs form the following collection: 
    $$\mathcal H = \{\Ktwo, \Pthree, \Kthree, \Pfour,\Konethree, \Pfive, \Konefour, \Tthreetwo, \Cfour, \Paw \} = \{K_2, P_3, K_3, P_4, K_{1, 3}, P_5, K_{1, 4}, T_{3, 2}, C_4, \mathrm{paw}\} . $$ For any $H \in \mathcal H$, the total contribution of all tuples spanning a given copy of $H$ depends only on $H$ and
    $u_n$. We denote it by $c_H = c_H(u_n)$. Thus, we can write
    \begin{align}        
\Var{V_n} = \sum_{H \in \mathcal H} c_H N(H, G_n). 
        \label{eq:VarVmotif1}
    \end{align}
    To evaluate the coefficients $c_H$, one computes $\Cov{Y_{e_1}Y_{e_2}}{Y_{e_3}Y_{e_4}}$ for all tuples
    of edges $(e_1,e_2,e_3,e_4)$ whose union is a fixed copy of $H$. For instance, when $H = \Ktwo$, all four edges coincide, and
    $c_{K_2} = \Var{Y_e^2}$. The remaining coefficients are computed similarly; the results, together with the asymptotic order of the corresponding subgraph counts (obtained from
    $\sum_{i=1}^n  d_{G_n}(i) = 2\NH{\Ktwo}$, $\sum_{i=1}^n \binom{ d_{G_n}(i)}{2} = \NH{\Pthree}$ and elementary bounds), are displayed in  \cref{tab:varV} in \cref{app:tables}. The following stronger bound will be required in the proof: 
\begin{equation}\label{eq:NHtree}
\NH{\Pfive}+\NH{\Tthreetwo} =O(\NH{\Pthree}^{\frac74}).
\end{equation}
Note that if $\NH{\Pthree}=0$, then both terms on the left of
\eqref{eq:NHtree} vanish. We therefore assume $\NH{\Pthree}\ge1$. Using $\binom{d_{G_n}(i)}{2}\le\NH{\Pthree}$, it follows that $d_{G_n}(i)=O(\NH{\Pthree}^{\frac12})$, uniformly in $1 \leq i \leq n$.
Hence, by the Cauchy--Schwarz inequality,
$$\sum_{j:\{i,j\}\in E(G_n)}(d_{G_n}(j)-1) \le (2d_{G_n}(i)\NH{\Pthree})^{\frac12} =O(\NH{\Pthree}^{\frac34}), $$
where we used $\sum_{j=1}^n d_{G_n}(j)(d_{G_n}(j)-1)=2\NH{\Pthree}$. Counting copies of $\Pfive$ according to their middle vertex gives, 
$$
\NH{\Pfive}
\le \frac12\sum_{i=1}^n
\left(\sum_{j:\{i,j\}\in E(G_n)}(d_{G_n}(j)-1)\right)^2
=O(\NH{\Pthree}^{\frac74}),
$$
since
$\sum_{i=1}^n\sum_{j:\{i,j\}\in E(G_n)}(d_{G_n}(j)-1)
=2\NH{\Pthree}$.
Similarly, counting copies of $\Tthreetwo$ according to their
unique vertex of degree three gives, 
$$
\NH{\Tthreetwo}
\le \sum_{i:d_{G_n}(i)\ge3}
\binom{d_{G_n}(i)-1}{2}
\sum_{j:\{i,j\}\in E(G_n)}(d_{G_n}(j)-1)
=O(\NH{\Pthree}^{\frac74}),
$$
because
$\sum_{i:d_{G_n}(i)\ge3}\binom{d_{G_n}(i)-1}{2}
\le\NH{\Pthree}$.
This proves \eqref{eq:NHtree}.

     We consider the following cases depending on the order of $\NH{\Ktwo}$.

   \begin{itemize}

\item  $\NH{\Ktwo} = o(\NH{\Pthree})$: Then \eqref{eq:VarVbd} follows by plugging the values for $c_H$ and the orders of $\NH{H}$ from  \cref{tab:varV} into \eqref{eq:VarVmotif1}, since every count other than $\NH{\Konefour}$ is $$O(\NH{\Ktwo}\NH{\Pthree}) + o(\NH{\Pthree}^2).$$  

\item $\NH{\Pthree} \le C\NH{\Ktwo}$, for some $C>0$: Note that $$\Delta(G_n):= \max_{ 1 \leq i \leq n}  d_{G_n}(i) = O(\sqrt{\NH{\Ktwo}}),$$ since
    $\binom{\Delta(G_n)}{2} \le \NH{\Pthree} \le C\NH{\Ktwo}$. 
    Using this bound and \eqref{eq:NHtree}, each motif count in  \cref{tab:varV} other than $\NH{\Konefour}$ is seen to be $o(\NH{\Ktwo}^2)$: for instance
    $$\NH{\Cfour}\le \NH{\Pthree} \Delta(G_n) = O(\NH{\Ktwo}^{\frac{3}{2}})$$ and
    $$\NH{\Konethree}\le \NH{\Pthree}\Delta(G_n) = O(\NH{\Ktwo}^{\frac{3}{2}}).$$ Hence, \eqref{eq:VarVbd} also holds in this case.  
\end{itemize}
\end{proof}

\begin{proof}[Proof of {\rm \cref{lem:4star3starbound}:}] 
Since $\NH{\Konethree} = \sum_{i=1}^n \binom{ d_{G_n}(i)}{3}$, we split this sum according to whether
    $ d_{G_n}(i) = 3$ or $ d_{G_n}(i) \ge 4$. For $ d_{G_n}(i) = 3$, use $\binom{ d_{G_n}(i)}{3} =1\le \binom{ d_{G_n}(i)}{2}$, so the sum
    over all vertices of degree $3$ is at most $\NH{\Pthree}$. For $ d_{G_n}(i)\ge 4$, we can write
    \[
        \binom{ d_{G_n}(i)}{3} = \frac{ d_{G_n}(i)-2}{3}\binom{ d_{G_n}(i)}{2}.
    \]
    Summing over all vertices $i$ with $ d_{G_n}(i) \ge 4$ and applying the Cauchy--Schwarz inequality gives
    \begin{align*}
        \sum_{i: d_{G_n}(i)\ge 4} \binom{ d_{G_n}(i)}{3} &\le \sqrt{\sum_{i: d_{G_n}(i)\ge 4}\binom{ d_{G_n}(i)}{2}} \cdot
        \sqrt{\sum_{i: d_{G_n}(i)\ge 4} \binom{ d_{G_n}(i)}{2}\left(\frac{ d_{G_n}(i)-2}{3}\right)^2} \\
        &\le \sqrt{\NH{\Pthree}} \cdot \sqrt{\sum_{i: d_{G_n}(i) \ge 4} \frac 43 \binom{ d_{G_n}(i)}{4} \frac{ d_{G_n}(i) - 2}{ d_{G_n}(i) - 3}}  \nonumber \\  
        & \le \sqrt{\frac 83 \NH{\Pthree}\sum_{i=1}^n \binom{ d_{G_n}(i)}{4}},
    \end{align*}
    where we used $\binom{ d_{G_n}(i)}{2}\frac{( d_{G_n}(i)-2)^2}{9} = \frac43\binom{ d_{G_n}(i)}{4}\frac{ d_{G_n}(i)-2}{ d_{G_n}(i)-3}$ and the fact that $\frac{ d_{G_n}(i) - 2}{ d_{G_n}(i)-3}\le 2$, for $ d_{G_n}(i)\ge 4$. Combining the above bounds the result follows.
    \end{proof}

\subsection{Proof of \cref{thm:clt} $(ii)$}
\label{sec:thmcltsp}

\begin{lemma}\label{lem:threestarbd_sparse}
    Under the hypotheses of  {\rm \cref{thm:clt}  $(ii)$}, 
    \begin{equation*}
       \sum_{i=1}^n  d_{G_n}(i)^3 = o\left(p_n^{\frac{1}{2} }(\NH{\Ktwo} + \NH{\Pthree})^{\frac{3}{2} }\right).
    \end{equation*}
\end{lemma}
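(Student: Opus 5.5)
Write $M_n:=\NH{\Ktwo}+\NH{\Pthree}$ throughout. The plan is to expand $\sum_i d_{G_n}(i)^3$ in star counts and bound each term separately against $p_n^{1/2}M_n^{3/2}$. As in the proof of \cref{thm:clt}$(i)$,
$$\sum_{i=1}^n d_{G_n}(i)^3=\sum_{i=1}^n\Big(6\tbinom{d_{G_n}(i)}{3}+6\tbinom{d_{G_n}(i)}{2}+d_{G_n}(i)\Big)=6\NH{\Konethree}+6\NH{\Pthree}+2\NH{\Ktwo}.$$
It therefore suffices to prove two bounds:
$$\NH{\Konethree}=o\big(p_n^{1/2}M_n^{3/2}\big),\qquad \NH{\Ktwo}+\NH{\Pthree}=o\big(p_n^{1/2}M_n^{3/2}\big).$$

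The first bound is exactly hypothesis \eqref{eq:cltsparse}, so nothing needs to be done there.

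For the second bound, note that $\NH{\Ktwo}+\NH{\Pthree}=M_n$. The ratio is
$$\frac{M_n}{p_n^{1/2}M_n^{3/2}}=\big(p_nM_n\big)^{-1/2}.$$
This tends to $0$ precisely because of the standing hypothesis $p_n(\NH{\Ktwo}+\NH{\Pthree})\to\infty$ in \cref{thm:clt}$(ii)$. Combining the two bounds with the identity above gives the lemma.

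There is no real obstacle. The only point requiring care is confirming the exact falling-factorial identity $d^3=6\binom d3+6\binom d2+d$, which the paper already uses, and noting that it is the divergence hypothesis, rather than the $3$-star hypothesis, that controls the edge and wedge contributions.
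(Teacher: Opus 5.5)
Your proof is correct and follows the paper's argument. You expand $\sum_i d_{G_n}(i)^3=6\NH{\Konethree}+6\NH{\Pthree}+2\NH{\Ktwo}$, control the $3$-star term by \eqref{eq:cltsparse}, and control the edge and wedge terms through $\big(p_n(\NH{\Ktwo}+\NH{\Pthree})\big)^{-1/2}\to 0$.
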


\begin{proof}
Using the identity $ d_{G_n}(i)^3 = 6\binom{ d_{G_n}(i)}{3} + 6\binom{ d_{G_n}(i)}{2} +  d_{G_n}(i)$,  we obtain
\begin{align*}
    \sum_{i=1}^n  d_{G_n}(i)^3 & = 6\NH{\Konethree} + 6\NH{\Pthree} + 2\NH{\Ktwo}  \nonumber \\  
    &\le 6\big(\NH{\Konethree} + \NH{\Pthree} + \NH{\Ktwo}\big).
\end{align*}
By the second condition in \cref{thm:clt} $(ii)$,
$$\NH{\Konethree} = o\left(p_n^{\frac12} (\NH{\Ktwo}+\NH{\Pthree})^{\frac32}\right).$$
Combining the above with the first condition in \cref{thm:clt} $(ii)$ gives the required result.  
\end{proof}

\begin{proof}[Proof of {\rm \cref{thm:clt}(ii):}] We first bound $A_1$. Recall from \eqref{eq:var_term} that
\begin{align}
       \sign^2 &= \frac{(1-u_n)(1+u_n)}{4}\NH{\Ktwo} + \frac{u_n (1-u_n)}{2}\NH{\Pthree}   \nonumber \\ 
       & = 2p_n(\NH{\Ktwo}+\NH{\Pthree})(1+o(1))  ,  
       \label{eq:var_sparse}
    \end{align}
     since $1-u_n = 4p_n(1-p_n) = 4p_n(1+o(1))$ and $u_n\to1$, as $p_n \rightarrow 0$. Hence, under the first condition in  \cref{thm:clt} $(ii)$, $\sign^2$ diverges.  Now, from
    \eqref{eq:A1_A2} and \eqref{eq:A1_expand}, we have 
    \begin{align}
        |A_1| &\leq \frac{1}{\sign}\sum_{e\in E(\Gn)} \E{|Y_e|\cdot(U_n-U_e)^2} \nonumber\\
        & \leq \frac{1}{\sign^3}\sum_{e\in E(\Gn)}
        \sum_{{\substack{e_1\in E(\Gn)\\ e_1\cap e \neq \varnothing}}}
        \sum_{{\substack{e_2\in E(\Gn)\\ e_2\cap e \neq \varnothing}}}\E{|Y_e Y_{e_1} Y_{e_2}|}. \label{eq:A1tripprodbd}
    \end{align}
    To estimate the expectation of this triple product, for each $e = \{i,j\}\in E(\Gn)$ define
    $Z_e := \bm{1}\{X_i \neq X_j\}$, so that
    \begin{equation}\label{eq:Zeexp}
        \E{Z_e} = 2p_n(1-p_n) \le 2p_n.
    \end{equation}
    Recall that $q_n = \E{\bm{1} \{X_i = X_j \}} = p_n^2 + (1-p_n)^2$ and, hence, 
    $Y_e = \bm{1}\{X_i = X_j\} - q_n = (1-q_n) - Z_e$ and $1-q_n = 2p_n(1-p_n)\le 2p_n$. Further, $|Y_e|= 1-q_n$ on $\{Z_e=0\}$ and $|Y_e| = q_n\le 1$ on $\{Z_e=1\}$. This gives the bound,  
    \begin{equation*}
        |Y_e| \le 2p_n + Z_e .
    \end{equation*}
    Using this bound on $|Y_e|$, the triple product in \eqref{eq:A1tripprodbd} satisfies
    \begin{align*}
        |Y_e Y_{e_1} Y_{e_2}| &\le (2p_n + Z_e)(2p_n + Z_{e_1})(2p_n + Z_{e_2})\\
        &\le 8p_n^3 + 4 p_n^2(Z_e + Z_{e_1} + Z_{e_2}) + 2 p_n(Z_e Z_{e_1} +Z_e Z_{e_2} + Z_{e_1} Z_{e_2})
        + Z_e Z_{e_1} Z_{e_2}.
    \end{align*}
    Taking expectations and using \eqref{eq:Zeexp} together with
    $\E{Z_a Z_b} \le \E{Z_a} \le 2p_n$ and $\E{Z_a Z_b Z_c} \le \E{Z_a} \le 2p_n$, for all
    $a,b,c \in E(\Gn)$, 
    \begin{align}
        \E{|Y_e Y_{e_1} Y_{e_2}|} &\le 8p_n^3 + 24 p_n^3 + 12 p_n^2 + 2p_n \le C p_n  , \label{eq:productY}
    \end{align}
    for an absolute constant $C$ and all $n$ large enough. Combining \eqref{eq:var_sparse} and \eqref{eq:productY}, together with \eqref{eq:A1tripprodbd},  gives,  
    \begin{align}
        |A_1| &\le \frac{C p_n}{\sign^3}\sum_{e\in E(\Gn)}
        \big|\{e'\in E(\Gn) : e'\cap e \neq \varnothing\}\big|^2
        \le \frac{C'\sum_{i=1}^n  d_{G_n}(i)^3}{p_n^{\frac{1}{2}}(\NH{\Pthree} + \NH{\Ktwo})^{\frac{3}{2}}}  ,  
        \label{eq:A1finalbdsparse}
    \end{align}
    for some constant $C' > 0$. Using  \cref{lem:threestarbd_sparse} in \eqref{eq:A1finalbdsparse}, we get $|A_1| = o(1)$.

We now bound $|A_2|$. By \eqref{eq:A1_A2} and the definition of the function class $\mathcal F$,
    \begin{align}\label{eq:sparseA2}
        |A_2| \le \sqrt{ \frac{2}{\pi} } \frac{1}{\sign^2}\E{|V_n-\E{V_n}|} . 
    \end{align}
    Define $\tilde V_n:=\sum_{i=1}^n  d_{G_n}(i)^2X_i$, where $ d_{G_n}(i)$ is the degree of the $i$-th vertex in $G_n$. We first show that
\begin{equation}\label{eq:degreeVn}
\E{|V_n-\tilde V_n|} \le Cp_n^2\sum_{i=1}^n  d_{G_n}(i)^2 , 
\end{equation}
for some constant $C > 0$. Recall that $Y_e=(1-q_n)-Z_e$ and $\E{Z_e}=1-q_n=2p_n(1-p_n)$. Hence, for any two edges $e_1, e_2 \in E(G_n)$,
    \begin{align*}
        \E{|Y_{e_1} Y_{e_2}-Z_{e_1}Z_{e_2} |} \le (1-q_n)\E{Z_{e_1} + Z_{e_2}}+(1-q_n)^2 =3(1-q_n)^2\le 12p_n^2.
    \end{align*}
Further, for $e_1 =\{i, j\}$ and $e_2 = \{j, k\}$ such that the indices $i, j, k$ are distinct, $$Z_{e_1}^2 =X_i+X_j-2X_iX_j \text{ and } Z_{e_1}Z_{e_2} =X_j-X_iX_j-X_jX_k+X_iX_k.$$ Hence, in the sum
$$\sum_{\substack{e_1, e_2\in E(\Gn)\\e_1 \cap e_2\neq\varnothing}}Z_{e_1}Z_{e_2},$$  
the linear terms add to $\sum_{i=1}^n \left( d_{G_n}(i)+ d_{G_n}(i)( d_{G_n}(i)-1)\right)X_i =\tilde V_n.$
By the triangle inequality,
 \begin{align*}
\E{|V_n-\tilde V_n|} &\le  \E{\left| \sum_{\substack{e_1, e_2\in E(\Gn)\\e_1 \cap e_2\neq\varnothing}}
        (Y_{e_1}Y_{e_2}-Z_{e_1}Z_{e_2})\right|}  + \E{\left| \sum_{\substack{e_1, e_2\in E(\Gn)\\e_1 \cap e_2\neq\varnothing}}
        Z_{e_1}Z_{e_2}-\tilde V_n\right|} \\
        &\le \sum_{\substack{e_1, e_2\in E(\Gn)\\e_1 \cap e_2\neq\varnothing}} \E{| Y_{e_1}Y_{e_2}-Z_{e_1}Z_{e_2}|} +3p_n^2(\NH{\Ktwo}+2\NH{\Pthree}) \\
        &\le 15p_n^2(\NH{\Ktwo}+2\NH{\Pthree}) \\
        &\le 15p_n^2\sum_{i=1}^n  d_{G_n}(i)^2.
    \end{align*}
 This proves \eqref{eq:degreeVn}. Then by triangle inequality, 
\begin{align}\label{eq:momentVn}
\E{|V_n-\E{V_n}|} & \le \E{ \left| \tilde V_n-\E{ \tilde V_n} \right|} +2\E{ \left|V_n - \tilde V_n  \right|}  \nonumber \\ 
& \le \E{\left|\sum_{i=1}^n  d_{G_n}(i)^2(X_i-p_n)\right|}+30p_n^2\sum_{i=1}^n  d_{G_n}(i)^2.
\end{align}
    To bound the first term, set $t_n:= (p_n\sum_{i=1}^n  d_{G_n}(i)^3)^{\frac{1}{3}}$ and first consider the indices where $ d_{G_n}(i) \leq t_n$. By the Cauchy--Schwarz inequality and independence, 
\begin{align*}
\left(\E{\left|\sum_{i:  d_{G_n}(i)\le t_n} d_{G_n}(i)^2(X_i-p_n)\right|}\right)^2 &  \le \E{\left(\sum_{i:  d_{G_n}(i)\le t_n} d_{G_n}(i)^2(X_i-p_n)\right)^2}  \nonumber  \\  
& \le p_n\sum_{i:  d_{G_n}(i)\le t_n} d_{G_n}(i)^4 .
\end{align*}
    For the indices where $ d_{G_n}(i) > t_n$ we use the bound
    \begin{align*}
    \E{\left|\sum_{i:  d_{G_n}(i)>t_n} d_{G_n}(i)^2(X_i-p_n)\right|} \le \sum_{i:  d_{G_n}(i)>t_n} d_{G_n}(i)^2\E{|X_i-p_n|} \le 2p_n\sum_{i:  d_{G_n}(i)>t_n} d_{G_n}(i)^2.
\end{align*}
Hence, 
\begin{align}\label{eq:tn}
\E{\left|\sum_{i=1}^n  d_{G_n}(i)^2(X_i-p_n)\right|} &\le \left(p_n\sum_{i:  d_{G_n}(i)\le t_n} d_{G_n}(i)^4\right)^{\frac{1}{2}}+2p_n\sum_{i:  d_{G_n}(i)>t_n} d_{G_n}(i)^2 \nonumber \\
        &\le \left(p_nt_n\sum_{i=1}^n  d_{G_n}(i)^3\right)^{\frac{1}{2}} +\frac{2p_n}{t_n}\sum_{i=1}^n  d_{G_n}(i)^3 \nonumber \\
        &=3\left(p_n\sum_{i=1}^n  d_{G_n}(i)^3\right)^{\frac{2}{3}}.
    \end{align}
    Combining \eqref{eq:sparseA2}, \eqref{eq:momentVn}, and \eqref{eq:tn}, with \eqref{eq:var_sparse} and
    $\sum_{i=1}^n  d_{G_n}(i)^2=2(\NH{\Ktwo}+\NH{\Pthree})$, for some constant $C > 0$, 
    \begin{align*}
        |A_2|  &\le \frac{C}{\sign^2} \left[ \left(p_n\sum_{i=1}^n  d_{G_n}(i)^3\right)^{\frac{2}{3}} +p_n^2\sum_{i=1}^n  d_{G_n}(i)^2 \right] \\
        &\le C\left[\left(\frac{\sum_{i=1}^n  d_{G_n}(i)^3} {p_n^{\frac{1}{2}} (\NH{\Ktwo}+\NH{\Pthree})^{\frac{3}{2}}}  \right)^{\frac{2}{3}}  +p_n \right]
        =o(1),
    \end{align*}
    where the last step follows from \cref{lem:threestarbd_sparse} and $p_n\to 0$. This completes the proof of \cref{thm:clt} $(ii)$.
  \end{proof}

\section{Proof of \cref{thm:Un}, \cref{ppn:clthalf_sparse} and \cref{ppn:thirdfourthsparse}}
\label{sec:Unpf}

We will prove the results in reverse order. The Proof of
\cref{ppn:thirdfourthsparse} follows readily from
\cref{lem:fourthmomentsparse} and \cref{lem:thirdmomentsparse}. We
provide the details below.

\begin{proof}[Proof of {\rm \cref{ppn:thirdfourthsparse}:}]
Suppose $\E{U_n^4}\to3$. By \cref{lem:fourthmomentsparse},
$$
p_n(\NH{\Ktwo}+\NH{\Pthree})\to\infty
\quad\text{and}\quad
\NH{\Konefour}
=o(p_n(\NH{\Ktwo}+\NH{\Pthree})^2).
$$
Hence, by  \eqref{eq:4star3starbound}, 
$$\frac{\NH{\Konethree}} {p_n^{\frac12}(\NH{\Ktwo}+\NH{\Pthree})^{\frac32}} \rightarrow 0.$$
The conclusion now follows from \cref{lem:thirdmomentsparse}.
\end{proof}

The proof of \cref{ppn:clthalf_sparse} also follows immediately.

\begin{proof}[Proof of {\rm \cref{ppn:clthalf_sparse}:}]
  The result in  \cref{ppn:clthalf_sparse} is an immediate consequence of  \cref{thm:clt}(ii)   and \cref{lem:thirdmomentsparse}. 
  \end{proof} 

The proof of \cref{thm:Un} requires one more lemma which ensures the uniform integrability of $U_n^4$. We state the lemma next.

\begin{lemma}\label{lem:momentUn}  
Fix $p\in(0,1)$ and let $U_n$ be as defined in \eqref{eq:Un}. Then there exists a constant $C_p<\infty$, depending only on $p$, such that
\begin{align}\label{eq:momentUn}
\sup_{n \geq 1} \E{|U_n|^6}  <  C_p. 
\end{align}
\end{lemma}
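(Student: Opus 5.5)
We need a uniform bound on the sixth moment of $U_n=S_n/\sigma_n$ for fixed $p$. The plan is to go through hypercontractivity for polynomials of independent biased Bernoulli variables rather than an explicit sixth-moment motif expansion.

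\emph{Step 1: write $S_n$ as a low-degree polynomial.} Set $\xi_i:=(X_i-p)/\sqrt{p(1-p)}$. These are i.i.d., mean zero, unit variance, and take two values. Since $\bm 1\{X_i=X_j\}=1-X_i-X_j+2X_iX_j$, expanding in the $\xi_i$ shows that $S_n$ is a multilinear polynomial of degree at most $2$ in $(\xi_1,\dots,\xi_n)$ with no constant term:
$$S_n=\sum_i b_i\xi_i+\sum_{\{i,j\}\in E(G_n)} a_{ij}\xi_i\xi_j,$$
where $b_i=d_{G_n}(i)\,\sqrt{p(1-p)}\,(4p-2)$ and $a_{ij}=2p(1-p)$. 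This decomposition is orthogonal, so
$$\sigma_n^2=\E{S_n^2}=\sum_i b_i^2+\sum_{\{i,j\}} a_{ij}^2,$$
which is consistent with \eqref{eq:varianceUn}.

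\emph{Step 2: apply hypercontractivity.} Use the Bonami--Beckner hypercontractive inequality for biased product measures on $\{0,1\}^n$ (e.g.\ O'Donnell, \emph{Analysis of Boolean Functions}, Ch.~10). For a multilinear polynomial $F$ of degree at most $d$ in the $\xi_i$, and $q>2$,
$$\|F\|_q\le (q-1)^{d/2}\lambda^{-d(1/2-1/q)}\|F\|_2,$$
where $\lambda=\min(p,1-p)$. Taking $q=6$, $d=2$, and $F=S_n/\sigma_n$ with $\|F\|_2=1$ gives
$$\E{|U_n|^6}\le \bigl(5\,\lambda^{-2/3}\bigr)^{6}=5^6\lambda^{-4}.$$
This bound depends only on $p$ and holds for every $n$ with $\sigma_n>0$. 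If some $G_n$ has no edges, set $U_n=0$ there, or note that the standing assumption $\NH{\Ktwo}\to\infty$ rules this out eventually.

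\emph{Main point to check.} The only real work is quoting the correct form of biased hypercontractivity, with its $\lambda$-dependent constant, and verifying that $S_n$ has degree at most $2$ in the $\xi_i$. The latter is immediate from the expansion in Step 1.

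\emph{Fallback if one wants to avoid hypercontractivity.} Instead, write $S_n=L_n+Q_n$ with $L_n$ the linear part and $Q_n$ the quadratic part, and use $\E{|U_n|^6}\le 32\bigl(\E{L_n^6}+\E{Q_n^6}\bigr)/\sigma_n^6$.
\begin{itemize}
\item For $L_n$, Rosenthal's inequality gives $\E{L_n^6}\le C_p\bigl((\sum_i b_i^2)^3+\sum_i b_i^6\bigr)\le C_p'\sigma_n^6$.
\item For $Q_n$, a decoupling inequality followed by conditional Rosenthal (equivalently, a moment bound for degenerate $U$-statistics) gives $\E{Q_n^6}\le C_p\bigl(\sum a_{ij}^2\bigr)^3$.
\end{itemize}
This route is longer, and controlling the sixth moment of the quadratic chaos is the step most likely to require care. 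For that reason hypercontractivity is the method I would try first.
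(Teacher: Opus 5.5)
Your argument is correct, and it takes a genuinely different route from the paper.

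\textbf{What the paper does.} It uses the same orthogonal split of $S_n$ into a linear part $\sum_i c_i Z_i$ and a quadratic part $Q_n=2p(1-p)\sum_{\{i,j\}\in E(G_n)}Z_iZ_j$. It then bounds each sixth moment by hand.
\begin{itemize}
\item The linear part is handled by a direct moment expansion.
\item $\E{Q_n^6}$ is expanded into a sum over six-edge multigraphs $H$ with minimum degree at least two. Every such $H$ except $\Kthree\sqcup\Kthree$ has an edge cover with at most three edges, so $N(H,G_n)=O(\NH{\Ktwo}^3)$.
\item The exception $\Kthree\sqcup\Kthree$ is handled through the Kruskal--Katona bound $\NH{\Kthree}=O(\NH{\Ktwo}^{3/2})$.
\end{itemize}

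\textbf{What you do instead.} You replace all of this with $(2,q)$-hypercontractivity for functions of degree at most $2$ on the $p$-biased cube. This does hold in the form you quote, with $\lambda=\min\{p,1-p\}$. Since $U_n$ has degree at most $2$ and $\E{U_n^2}=1$ by definition, the bound $\E{|U_n|^6}\le 5^6\min\{p,1-p\}^{-4}$ follows in one line.

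\textbf{Comparison.} Your route gives an explicit constant and controls every moment $\E{|U_n|^q}$ at once. It needs no case analysis. It also makes visible why the uniform bound breaks down when $p_n\to0$: the constant grows like $\min\{p,1-p\}^{-4}$, consistent with the paper's remark on the sparse regime. The paper's route is self-contained and uses only elementary counting.

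\textbf{One small slip.} The linear coefficient is $(2p-1)\sqrt{p(1-p)}\,d_{G_n}(i)$, not $(4p-2)\sqrt{p(1-p)}\,d_{G_n}(i)$. Each edge $\{i,j\}$ contributes $-\sqrt{p(1-p)}+2p\sqrt{p(1-p)}=(2p-1)\sqrt{p(1-p)}$ to the coefficient of $\xi_i$. With your value, the orthogonal decomposition would not reproduce \eqref{eq:varianceUn}, contrary to what you assert. This is harmless, because the hypercontractive step uses only the degree of $S_n$ and the normalization of $U_n$.
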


Note that the bound is for fixed $p$. Uniform integrability of $U_n^4$ can fail when $p_n\to0$, which is why asymptotic normality does not force the moment conditions in the sparse regime (see \cref{example:thirdsparse}). We shall defer the proof of this lemma to the end of this section and prove \cref{thm:Un} assuming the above result.  

\begin{proof}[Proof of  {\rm \cref{thm:Un}:}]
$(i)$ Fix $p \in (0, 1) \backslash \{\frac 12, \frac{1\pm 1/\sqrt{3}}{2}\}$. Then $(b)$ implies $(a)$ follows from \cref{thm:clt}$(i)$. The equivalence of $(b)$ and $(c)$ follows from \eqref{eq:fmc_cond1} in \cref{lem:fm_pneqhalf}(a). Further, using the bound  \eqref{eq:4star3starbound} from \cref{lem:4star3starbound} and \eqref{eq:thirdmc_cond1}from \cref{lem:thirdm_pneqhalf} it follows that $(b)$ implies $(d)$. To prove (d) implies (b), we use the bound $$( d_{G_n}(i)-3)^3\le  d_{G_n}(i)( d_{G_n}(i)-1)( d_{G_n}(i)-2)=6\binom{ d_{G_n}(i)}{3},$$ for any $ d_{G_n}(i) \geq 1$. This yields, 
\begin{align*}
\NH{\Konefour} =\sum_{i=1}^n \binom{ d_{G_n}(i)}{4} \le \frac{6^{\frac{1}{3}}}{4}
       \sum_{i=1}^n \binom{ d_{G_n}(i)}{3}^{\frac{4}{3}} & \le \frac{6^{\frac{1}{3}}}{4}
       \left(\sum_{i=1}^n \binom{ d_{G_n}(i)}{3}\right)^{\frac{4}{3}}
=\frac{6^{\frac{1}{3}}}{4}\left(\NH{\Konethree}\right)^{\frac{4}{3}}.
\end{align*}
Hence, 

\begin{align}\label{eq:starcount}
\frac{\NH{\Konefour}} {\left(\NH{\Ktwo}+\NH{\Pthree}\right)^2} \le \frac{6^{\frac{1}{3}}}{4} \left( \frac{\NH{\Konethree}} {\left(\NH{\Ktwo}+\NH{\Pthree}\right)^{\frac{3}{2}}} \right)^{\frac{4}{3}} = o(1). 
\end{align}

Again employing \eqref{eq:thirdmc_cond1} yields the implication. Finally, since $\{U_n^4\}_{n \geq 1}$ is uniformly integrable by \cref{lem:momentUn}, we have $(a)$ implies both $(c)$ and $(d)$. This proves all the implications in  \cref{thm:Un} $(i)$.  

\medskip

$(ii)$ Fix $p = \frac{1\pm 1/\sqrt{3}}{2}$. Then $(b)$ implies $(a)$ follows from \cref{thm:clt}(i). The equivalence of $(b)$ and $(c)$ follows from \eqref{eq:thirdmc_cond1}, \cref{lem:4star3starbound}, and \eqref{eq:starcount}. Finally, since $\{U_n^4\}_{n \geq 1}$ is uniformly integrable by \cref{lem:momentUn}, we have $(a)$ implies $(c)$. This proves all the implications in  \cref{thm:Un} $(ii)$.

\medskip 

$(iii)$ Fix $p = \frac{1}{2}$. Then $(b)$ implies $(a)$ follows from \cref{ppn:clthalf}. The equivalence of $(b)$ and $(c)$ follows from \eqref{eq:fmc_phalf} in \cref{lem:fm_pneqhalf} (c). Finally, since $\{U_n^4\}_{n \geq 1}$ is uniformly integrable by \cref{lem:momentUn}, we have $(a)$ implies $(c)$. This proves all the implications in  \cref{thm:Un} $(iii)$.  \hfill  
\end{proof}

To complete the proof of \cref{thm:Un}  we now prove \cref{lem:momentUn}.

\begin{proof}[Proof of {\rm \cref{lem:momentUn}:}]
Let $S_n$ be as in \eqref{eq:Sn}. For $1 \leq i \leq n$, define 
$$Z_i:=\frac{X_i-p}{\sqrt{p(1-p)}}.$$
Then $S_n$ can be written as
\begin{align*}
S_n & =(2p-1)\sqrt{p(1-p)} \sum_{i=1}^n  d_{G_n}(i) Z_i +2 p (1-p) \sum_{\{i,j\}\in E(G_n) } Z_iZ_j  \nonumber \\ 
& = \sum_{i=1}^n c_i Z_i + \sum_{1 \leq i < j \leq n} b_{ij} Z_iZ_j  
\end{align*}  
where $b_{ij} = 2 p (1-p) \bm 1\{\{i,j\}\in E(G_n) \}$ and $c_i=(2p-1)\sqrt{p(1-p)}  d_{G_n}(i) $. 
Note that the $\{Z_i\}_{1 \leq i \leq n}$ and $\{Z_i Z_j\}_{1 \leq i <j \leq n}$, are mutually uncorrelated, and each has second moment one. Hence, 
$$\sigma_n^2 = \Var{S_n} =\sum_{i=1}^n c_i^2+\sum_{1 \leq i<j \leq n}b_{ij}^2.$$
Since $U_n = \frac{S_n}{\sigma_n}$, the result in \eqref{eq:momentUn} then follows if we show the following: 
\begin{align}\label{eq:Z}
 \E{\left|\sum_{i=1}^n c_i Z_i \right|^6 } \leq C_p' \left( \sum_{i=1}^n c_i^2\right)^3 \text{ and } \E{\left| \sum_{1 \leq i < j \leq n} b_{ij} Z_iZ_j \right|^6 } \leq C_p'' \left( \sum_{1 \leq i < j \leq n} b_{ij}^2 \right)^3  , 
\end{align}
for constants $C_p', C_p'' >0$, depending only on $p$. 

To establish the first inequality in \eqref{eq:Z}, note that every term in the sixth moment expansion in which an index appears exactly once has expectation zero. Hence, 
\begin{align*}
& \mathbb{E}\left|\sum_{i=1}^n c_i Z_i\right|^6  \nonumber  \\  
& \le 90 M_p^6\left( \sum_{i=1}^n |c_i|^6 + \sum_{1 \leq i\ne j \leq n}|c_i|^4|c_j|^2 + \sum_{1 \leq i<j \leq n}|c_i|^3|c_j|^3 + \sum_{1 \leq i<j<k \leq n}|c_i|^2|c_j|^2|c_k|^2 \right)  ,  
\end{align*}
where we use the bound $|Z_i|\le M_p := \frac{\max\{p, 1-p\}}{\sqrt{p (1-p)}}$. Using the bound $\sum_{i=1}^n |c_i|^r \le (\sum_{i=1}^n c_i^2)^{\frac{r}{2}}$, for  $r\ge2$, the first inequality in \eqref{eq:Z} follows.

For the second inequality, define $\beta:=2p(1-p)$ and 
$$Q_n:=\sum_{1 \leq i<j \leq n}b_{ij}Z_iZ_j = \beta\sum_{\{i,j\}\in E(G_n)}Z_iZ_j.$$
Expanding the sixth power gives
$$\mathbb{E}|Q_n|^6 =\beta^6 \sum_{\{i_1, j_1\}, \{i_2, j_2\}, \ldots, \{i_6, j_6\} \in E(G_n)}
\mathbb{E}\left[ \prod_{r=1}^6 Z_{i_r}Z_{j_r} \right] . $$
Associate with each ordered tuple of edges $\{i_1, j_1\}, \{i_2, j_2\}, \ldots, \{i_6, j_6\}$ the
loop-less multigraph $H$ formed by these six edges, retaining
their multiplicities. Then 
$$E_H:= \mathbb{E}\left[\prod_{r=1}^6 Z_{i_r}Z_{j_r} \right] =\prod_{v\in V(H)}\mathbb{E}[Z_v^{d_H(v)}],$$
where $d_H(v)$ is the degree of the vertex $v$ in $H$. Note that $E_H = 0$ whenever $H$ has a vertex of degree one and $|E_H| \leq M_p^{12}$. Denote by $\mathcal H_6$ the collection of all multigraphs with six edges up to isomorphism such that $\min_{v \in V(H)} d_H(v) \geq 2$. Then, by the above discussion, 
\begin{align}\label{eq:Qn}
\mathbb{E}|Q_n|^6 \leq A_p \sum_{H \in \mathcal H_6} N(H, G_n)  ,  
\end{align}
for some constant $A_p > 0$. We claim that there exists a constant $C > 0$ such that   
\begin{align}\label{eq:NHGnmoment}
 N(H, G_n) \leq C \NH{\Ktwo}^3, \quad \text{ for every } H \in \mathcal H_6  .  
\end{align}
Note that the claim follows immediately if every $H \in \mathcal H_6$ has an edge-cover consisting of at most three edges. (Recall that an {\it edge cover} of $H$ is a collection of edges of $H$ incident to every vertex). Through the following cases we show that such an edge cover exists, with one exception.
\begin{itemize}

\item $|V(H)|\le4$: In this case a spanning forest of its underlying simple graph is an edge cover with at most three edges.

\item $|V(H)|=5$. In this case, the underlying simple graph contains a matching of size two. If not, the edges of the underlying simple graph would pairwise intersect. A family of pairwise intersecting edges either has a common endpoint or is contained in a triangle. Since all five vertices are incident to an edge, the underlying graph would therefore be the 4-star $K_{1,4}$. Each of its four leaves would require an incident edge of multiplicity at least two, contradicting the fact that $H$ has only six edges. Hence, the two edges forming the  matching together with any edge incident to the remaining vertex, give the required three-edge cover.

\item If $|V(H)|=6$, then every vertex of $H$ has degree exactly two, because the sum of the degrees is twelve. Hence, $H$ is a disjoint union of cycles, with a doubled edge regarded as a cycle of length two. Therefore, the possibilities are a 6-cycle, a disjoint union of a 4-cycle and a 2-cycle, a disjoint union of two 3-cycles ($\Kthree \sqcup \Kthree$), and a disjoint union of three 2-cycles. All of these except $\Kthree \sqcup \Kthree$ have an edge cover consisting of three edges.
\end{itemize}

To deal with the case $H= \Kthree \sqcup \Kthree$, first recall that by the Kruskal-Katona theorem (see, for example, \cite[Equation~(2.3)]{lovasz_book}), 
$$\NH{\Kthree} \leq C' \NH{\Ktwo}^\frac{3}{2},$$
for some constant $C' > 0$. Hence, 
$$\NH{\Kthree \sqcup \Kthree} \leq  2 \NH{\Kthree}^2 \leq C \NH{\Ktwo}^3.$$
The above discussion shows that \eqref{eq:NHGnmoment} holds for $H \in \mathcal H_6$. Therefore, using \eqref{eq:Qn},  
$$\mathbb{E}|Q_n|^6 \le C A_p \NH{\Ktwo}^3 =C_p'' \left(\sum_{1 \leq i<j \leq n}  b_{ij}^2\right)^3,$$
which proves the second inequality in \eqref{eq:Z}.  
\end{proof}

\section{Proof of \cref{thm:cltH0} }
\label{sec:proofs-knn}

Recall that given $L_n \sim \mathrm{Poi}(n_1+n_2)$,  $\mathcal Z_{n}=\{Z_{1}, Z_{2}, \ldots, Z_{L_{n}}\}$ is an i.i.d. collection from the distribution with density $\frac{n_{1}}{n} f+ \frac{n_{2}}{n} g$. Then $\mathcal Z_{n}$ is a nonhomogeneous Poisson process in $\R^{d}$ with rate function $n \phi_n=n_{1} f+n_{2} g$.  Denote by $\mathcal G_K(\mathcal Z_{n})$ the $K$-NN graph on the pooled sample $\mathcal Z_{n}$. The following establishes bounds on the relevant motif counts in $\mathcal G_K(\mathcal Z_{n})$.

\begin{lemma}\label{lem:NHK}
For an integer $K\ge1$, let $\tilde K:=\min\{K,\max\{L_n-1,0\}\}$. There exists a constant $B_d>0$, depending only on $d$, such that the following hold almost surely, 
\begin{itemize}

\item[$(i)$] $\frac12 L_n\tilde K \le \NHK{\Ktwo}\le L_n\tilde K$,

\item[$(ii)$] $L_n\binom{\tilde K}{2} \le \NHK{\Pthree} \le \frac{1}{2} B_d^2 L_n\tilde K^2$,

\item[$(iii)$] $\NHK{\Konefour} \le \frac{1}{24} B_d^4  L_n\tilde K^4$.  
\end{itemize}
Further, for every $K\ge1$, 
\begin{align}\label{eq:NHK}
\frac12L_n\tilde K^2 \le \NHK{\Ktwo}+\NHK{\Pthree} \le \frac{1}{2} B_d^2 L_n\tilde K^2.
\end{align}   
\end{lemma}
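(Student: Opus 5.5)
The plan is to reduce everything to degree bounds in the undirected $K$-NN graph, using the classical kissing-number fact. Write $G=\mathcal G_K(\mathcal Z_n)$ and $d(z)$ for the degree of $z\in\mathcal Z_n$. The trivial cases $L_n\le 1$ (so $\tilde K=0$, and all counts vanish) and $L_n\le K$ (complete graph on $L_n$ points, $\tilde K=L_n-1$) are handled directly. In the second case $d(z)=\tilde K$ for every $z$, and all bounds follow from $\NHK{\Ktwo}=L_n\tilde K/2$ and $\binom{\tilde K}{2}\le \tilde K^2/2$, provided $B_d\ge 1$. So assume $L_n\ge K+1$, so that $\tilde K=K$.

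\textbf{Key degree bounds.} Every vertex has out-degree exactly $K$ in the directed $K$-NN graph, hence $d(z)\ge K$. For the upper bound I will use the known geometric fact that a point can be among the $K$ nearest neighbours of at most $c_dK$ other points, where $c_d$ is the kissing-type constant of $\R^d$. This is proved by partitioning $\R^d$ into $c_d$ cones of angular width less than $\pi/3$ around $z$. Within one cone, if more than $K$ points had $z$ among their $K$ nearest neighbours, take the $K+1$ nearest of them to $z$. The farthest of these, $w$, has the other $K$ strictly closer to it than $z$, because the angle is below $60^\circ$ and $|y-z|<|w-z|$ gives $|y-w|<|w-z|$; distinct distances ensure strictness. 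This contradicts $z$ being a $K$-NN of $w$. Hence in-degree is at most $c_dK$, and $d(z)\le K+c_dK=:B_dK$ with $B_d=1+c_d\ge1$.

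\textbf{Deriving (i)--(iii) and \eqref{eq:NHK}.} From the degree bounds, $2\NHK{\Ktwo}=\sum_z d(z)$. For (i), the lower bound follows from $d(z)\ge K$. The upper bound follows because each edge comes from at least one directed arc and there are exactly $L_nK$ arcs. For (ii), write $\NHK{\Pthree}=\sum_z\binom{d(z)}{2}$, which lies between $L_n\binom{K}{2}$ and $L_n\binom{B_dK}{2}\le \frac12B_d^2L_nK^2$. For (iii), $\sum_z\binom{d(z)}{4}\le L_n(B_dK)^4/24$. Then \eqref{eq:NHK} follows by adding: the upper bound needs $L_nK\le \frac12B_d^2L_nK^2-\frac12B_d^2L_nK^2\cdot$ slack, so I will enlarge $B_d$ if necessary, for example using $L_nK\le L_nK^2$ and absorbing it into the constant. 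The lower bound uses $\frac12 L_nK+L_n\frac{K(K-1)}{2}=\frac12L_nK^2$ exactly.

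The main obstacle is the in-degree (cone) bound. Care is needed with the angle, where it must be strictly below $60^\circ$ for strictness, and with making the constant in \eqref{eq:NHK} consistent with (ii), which may require redefining $B_d$ slightly larger. Everything else is bookkeeping.
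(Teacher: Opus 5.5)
Your proposal is correct and follows essentially the same route as the paper. Both arguments use the two-sided degree bound $\tilde K\le d(z)\le B_d\tilde K$, the directed-arc count for (i), and summation of $\binom{d(z)}{2}$ and $\binom{d(z)}{4}$ over vertices for (ii) and (iii); the only difference there is that the paper cites Stone and Gy\"orfi et al.\ for the upper degree bound instead of proving it by the cone argument as you do.

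Enlarging $B_d$ for the upper bound in \eqref{eq:NHK} is unnecessary, though harmless since (ii) and (iii) are monotone in $B_d$: the identity $\NHK{\Ktwo}+\NHK{\Pthree}=\frac12\sum_z d(z)^2$, which the paper uses, gives $\frac12 B_d^2L_n\tilde K^2$ directly with the same constant.
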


\begin{proof}
Note that the bounds are immediate if $L_n\le1$. Therefore, we assume $L_n\ge2$. We will use the following standard degree-bound: For $x \in\mathcal Z_n $, the degree $d_x$ of the vertex $x$ in the graph $\mathcal G_K(\mathcal Z_{n})$ satisfies: 
\begin{align}\label{eq:vertexdegree}
\tilde K\le d_x\le B_d \tilde K, 
\end{align}
for some constant $B_d > 0 $, depending only on $d$. (Note that the lower bound is immediate from the definition and for the upper bound see \cite[Proposition 12, Section 11]{Stone1977} and also \citet[Chapter~6]{Gyorfi2002}).

Using the degree bound the proof can be completed as follows:

\begin{itemize}

\item There are exactly $L_n\tilde K$ directed nearest-neighbor
relations. Each undirected edge corresponds to either one or
two such relations, giving $\frac12L_n\tilde K \le \NHK{\Ktwo}\le L_n\tilde K$.

\item For $\Pthree$ the  degree bound gives, 
$$L_n\binom{\tilde K}{2} \le \NHK{\Pthree} =\sum_{x\in\mathcal Z_n}\binom{d_x }{2} \le \frac{B_d^2}{2}L_n\tilde K^2.$$

\item For $\Konefour$ the  degree bound gives, 
$$\NHK{\Konefour}=\sum_{x\in\mathcal Z_n}\binom{d_x}{4} \le \frac{B_d^4}{24}L_n\tilde K^4.$$  
\end{itemize}
Also, \eqref{eq:NHK} follows from the degree bound and the observation: 
$$\NHK{\Ktwo}+\NHK{\Pthree} =\frac12\sum_{x\in\mathcal Z_n} d_x^2.$$
This completes the proof. 
\end{proof}

To prove \cref{thm:cltH0} it is more convenient to work with $T^\circ(\mathcal G_K(\mathcal Z_{n}))$, the number of monochromatic edges (recall \eqref{eq:Tpoissonm}). Note that 
\begin{equation*}  
U_n^\times(\mathcal G_K(\mathcal Z_{n}))  = \frac{T^{\times}(\mathcal G_K(\mathcal Z_{n})) - \mu_{\mathscr F}^\times }{\sigma_{\mathscr F} }  = - \frac{T^{\circ}(\mathcal G_K(\mathcal Z_{n})) - \mu_{\mathscr F}^\circ }{\sigma_{\mathscr F} }  ,  
\end{equation*}
where 
$$\mu_{\mathscr F}^\circ := \E{T^{\circ}(\mathcal G_K(\mathcal Z_{n}))| \mathscr F} = (\lambda_n^2 + (1-\lambda_n)^2) \NHK{\Ktwo} = \NHK{\Ktwo}  -  \mu_{\mathscr F}^\times.$$ We will now apply the results in \cref{sec:wass} by conditioning on sigma-algebra $\mathscr F= \sigma(L_{n}, \mathcal Z_n)$.  For this, we will require the following notion of the conditional Wasserstein distance.

\begin{definition}
Given two real-valued random variables $X$ and $Z$ with finite first moments, and a $\sigma$-algebra $\mathcal{F}$, the Wasserstein distance between $X \mid \mathcal F$ (the conditional law of $X$ given the $\sigma$-algebra $\mathcal{F}$) and $Z$ is defined as
$$\mathrm{Wass}(X\mid\mathcal{F}, Z):=\sup_{g \in\mathcal{L}} \left|\E{g(X)\mid\mathcal{F}}  - \E{g(Z)} \right|,$$
where $\mathcal L$ is the collection of 1-Lipschitz functions from $\R \rightarrow \R$.  
\label{definition:cond_wass}
\end{definition}

Recall that $\lambda_n = \frac{n_1}{n}$. First we consider the case $\lambda_n  \rightarrow \lambda \in (0, 1) \backslash \{\frac{1}{2}\}$. Note that conditional on $\mathcal Z_n$, $\mathcal G_K(\mathcal Z_n)$ is a fixed graph with an i.i.d.\ $\Ber(\lambda_n)$ vertex coloring. Hence, from \eqref{eq:WNUndense}, whenever $L_{n} \geq 2$, 
\begin{align*}
& \Wass(U_n^\times(\mathcal G_K(\mathcal Z_{n}))|\mathscr F, N(0,1))  \nonumber  \\   
& \le C_{\lambda} \left( \frac{1}{\sqrt{\NHK{\Ktwo}+\NHK{\Pthree}}} +  \frac{\sqrt{\NHK{\Konefour}}}{\NHK{\Ktwo}+\NHK{\Pthree}} \right) ,
\end{align*}
where $C_{\lambda} > 0$ is a constant depending only on $\lambda$. Applying the bounds in \cref{lem:NHK} then gives, 
\begin{align}\label{eq:Wp}
 \Wass(U_n^\times(\mathcal G_K(\mathcal Z_{n}))|\mathscr F, N(0,1)) & \le \frac{C_{\lambda, d}}{\sqrt{L_n}}  ,
\end{align}
almost surely, for some constant $C_{\lambda, d} > 0$, that depends only on $d$ and $\lambda$.

Next, we consider the case where $n_1 = n_2$. This means, $\lambda_n = \frac{1}{2}$. In this case, by \eqref{eq:WUnZhalf}, whenever $L_{n} \geq 2$, 
\begin{align*}
\Wass(U_n^\times(\mathcal G_K(\mathcal Z_{n}))|\mathscr F, N(0,1)) \le 4\sqrt{C} \left( \NHK{\Ktwo}^{-\frac{1}{2}}+\frac{\NHK{\Cfour}}{\NHK{\Ktwo}^2} \right)^{\frac{1}{10}}.
\end{align*}
 Applying the bounds in \cref{lem:NHK} and $\NHK{\Cfour} = O(\NHK{\Ktwo} \tilde K^2) = O(L_n \tilde K^3)$ gives, 
\begin{align}\label{eq:Wphalf}
\Wass(U_n^\times(\mathcal G_K(\mathcal Z_{n}))|\mathscr F, N(0,1)) \le \overline{C}_d \left( \frac{1}{\sqrt{L_n\tilde K}} +\frac{ \tilde K}{L_n}  \right)^{\frac{1}{10}}  ,  
\end{align}
where $\overline{C}_d> 0 $ is a constant depending on $d$.

To remove the conditioning, define $\mathcal E_n:=\{L_n\geq n/2\}$. Then,  
$$ \operatorname{Wass}(U_n^\times(\mathcal G_K(\mathcal Z_{n})),N(0,1)) \leq \E{\operatorname{Wass}(U_n^\times(\mathcal G_K(\mathcal Z_{n})) \mid\mathscr F, N(0,1)) \bm 1\{ \mathcal E_n \}} +2\mathbb P(\mathcal E_n^c) , $$
where for the second term we use the bound $\operatorname{Wass}(U_n^\times(\mathcal G_K(\mathcal Z_{n})) | \mathscr F, N(0,1)) \leq 2$. Since $\mathbb E[L_n]=  \Var{L_n} = n$, Chebyshev's inequality gives $\mathbb P(\mathcal E_n^c) \leq \mathbb P(|L_n-n|\geq n/2) \leq \frac{4}{n}$. Combining the above with \eqref{eq:Wp} shows that $\operatorname{Wass}(U_n^\times(\mathcal G_K(\mathcal Z_{n})),N(0,1)) \rightarrow 0$, for $\lambda \ne \frac{1}{2}$ and irrespective of the choice of $K$. For $n_1=n_2$, the same argument with \eqref{eq:Wphalf} shows that $\operatorname{Wass}(U_n^\times(\mathcal G_K(\mathcal Z_{n})),N(0,1)) \rightarrow 0$, whenever $K= o(n)$.  \hfill  $\Box$

\section{Examples}  
\label{sec:examples}

In this section, we present various examples illustrating the necessity of the conditions in the preceding results. We begin with an example showing that the fourth moment condition is not enough for the asymptotic normality of $U_n$ when $p = \frac{1 \pm 1/\sqrt{3}}{2}$. Recall from \cref{thm:Un} $(ii)$, that for these two values of $p$, the third moment condition $\E{U_n^3} \rightarrow 0$ is necessary and sufficient for the asymptotic normality of $U_n$. Departing slightly from the notation in the preceding sections, we use $n$ here to index the graph sequence rather than to denote the number of vertices.

\begin{example}[The fourth moment condition does not suffice for $p = \frac{1 \pm 1/\sqrt{3}}{2}$]\label{example:fourth}
    For $n\ge 1$, let $G_{n}=K_{1,n}$ on the vertex set $\{0,1,\ldots,n\}$, with the central vertex labeled $0$. Assume $p = \frac{1 + 1/\sqrt{3}}{2}$. The case $p=\frac{1 - 1/\sqrt{3}}{2}$ is similar. Then $\E{U_n^4} \rightarrow 3$, from  \cref{lem:fm_pneqhalf} (b). However, in this case $U_n$ does not have a limiting normal distribution. In particular, we claim that, as $n\to \infty$,    
    \begin{equation} \label{eq:fmp_counter}
        U_n \xrightarrow{D} p  \delta_a + (1-p)  \delta_b,
        \quad \text{ where } a = \frac{\sqrt{3}-1}{\sqrt 2} \text{ and } b = -\frac{\sqrt{3}+1}{\sqrt 2}.
    \end{equation}
    To see this, define $R_i = 2X_i - 1$, for $0\le i\le n$. Note that $\{R_i\}_{0 \leq i \leq n}$ are i.i.d.\ $\pm1$-valued with mean $1/\sqrt3$ and variance $\frac{2}{3}$. Since $\bm{1}\{X_0=X_i\} = (1+R_0R_i)/2$ and $\E{R_0R_i} = \frac{1}{3}$,
    \begin{align*}
    U_n &= \frac{1}{2\sigma_n} \sum_{i=1}^n \left(R_0 R_i - \frac13\right)
    =\frac{R_0}{2\sigma_n}\left(\sum_{i=1}^n R_i - \frac{n}{\sqrt3}\right)
      + \frac{\left(n R_0/\sqrt{3} - n/3\right)}{ 2 \sigma_n} . 
    \end{align*}
    Note that 
    $$\frac{\left(n R_0/\sqrt{3} - n/3\right)}{ 2 \sigma_n} \xrightarrow{D} p  \delta_a + (1-p)  \delta_b  , $$ 
    for $a$ and $b$ as defined in \eqref{eq:fmp_counter}. Further, by the classical CLT,   
    $$\frac{\sum_{i=1}^n R_i - n/\sqrt{3}}{\sqrt{2n/3}} \xrightarrow{D} N(0,1).$$ Since, by  \eqref{eq:var_term}, $\sign = \frac{1}{3\sqrt{2}} n + o(n)$, this implies, 
    $$\frac{R_0}{2\sigma_n}\left(\sum_{i=1}^n R_i - \frac{n}{\sqrt3}\right) \xrightarrow{P} 0  .  $$ Combining the above proves the result in \eqref{eq:fmp_counter}. 
    \end{example}

Next, we show that the third moment condition is not enough for the asymptotic normality of $U_n$ when $p = \frac{1}{2}$. Recall from \cref{thm:Un} (iii) that in this case, the fourth moment condition $\E{U_n^4} \rightarrow 3$ is necessary and sufficient for the asymptotic normality of $U_n$.

\begin{example}[The third moment condition does not suffice for $p= \frac{1}{2}$]
\label{example:third}
For $n\ge 1$, let $G_{n}=K_{2, n}$, the complete bipartite graph with two vertices on one side and $n$ vertices on the other side. Label the vertices in the two sides by $\{1', 2'\}$ and $\{1, \ldots,n\}$, respectively. Assume $p = \frac{1}{2}$.  Then, since $K_{2, n}$ is triangle-free, by  \cref{lem:thirdm_pneqhalf} (b), $\E{U_n^3} \rightarrow 0$. 
However, in this case $U_n$ does not have a limiting normal distribution. To see this, note that there are two possibilities: 
\begin{itemize}

\item If the vertices $\{1', 2'\}$ receive different colors (which happens with probability $\frac{1}{2}$), then the number of monochromatic edges $T_n = n$.  

\item If the vertices $\{1', 2'\}$ receive the same color (which also happens with probability $\frac{1}{2}$), then the number of monochromatic edges $T_n \stackrel{D} = 2 M_n$, where $M_n\sim \mathrm{Bin}(n, \frac{1}{2})$. 
\end{itemize} 
Hence, $\E{T_n} = n$, $\sigma_n^2 = \Var{T_n} = \frac{n}{2}$ (recall \eqref{eq:var_term}), and 
$$U_n = \frac{T_n - \E{T_n}}{\sigma_n} \stackrel{D}{\rightarrow}\frac{1}{2}N(0,2)+\frac{1}{2}\delta_0,$$
which is a mixture of a normal distribution and a point mass at 0. 
\end{example}

The next example shows that in the sparse regime $p_n \rightarrow 0$, as $n \rightarrow \infty$, the third moment condition, although sufficient, is not necessary for the asymptotic normality of $U_n$.

\begin{example}[Third moment condition not necessary in the sparse regime]
Suppose $G_n$ consists of an $n^2$-star $K_{1, n^2}$ and $n^5$ disjoint edges. Choose $p_n = \frac{1}{n^3}$. In this case, it is easy to check that condition (b) of \cref{lem:thirdmomentsparse} does not hold. Hence, $\E{U_n^3} \nrightarrow 0$. In particular, by direct calculation using \cref{tab:third} it follows that $\E{U_n^3} \rightarrow -\frac{1}{2\sqrt2}$. However, $U_n$ is asymptotically normal. To see this, recall the definition of $T_n$ from \eqref{eq:Tn} and denote 
$$T_n=A_n+B_n,$$ 
where $A_n$ and $B_n$ count the number of monochromatic edges in the star and the disjoint edges, respectively. Let $u_n=(2p_n-1)^2$. Note that $B_n\sim\operatorname{Bin}(n^5,p_n^2+(1-p_n)^2)$ and $\Var{B_n}\sim2n^2$. Also,   from \eqref{eq:varianceUn} we have,  
$$\Var{A_n}=p_n(1-p_n)(n^2+u_nn^4) \sim n.$$
Since $A_n$ and $B_n$ are independent, $\sign^2 = \Var{T_n} \sim2n^2$. Hence,
$$\E{\left(\frac{A_n-\E{A_n}}{\sign}\right)^2} =\frac{\Var{A_n}}{\sign^2} \rightarrow0.$$
Therefore, by the CLT and Slutsky's theorem, 
$$U_n =\frac{A_n-\E{A_n}}{\sign} +\frac{B_n-\E{B_n}}{\sign} \xrightarrow{D} N(0,1).$$ 
Hence, $U_n \xrightarrow{D} N(0,1)$, even though $\E{U_n^3} \nrightarrow 0$.   \label{example:thirdsparse}
\end{example}

Finally, we give an example in the sparse regime where the third moment condition holds, hence, $U_n$ is asymptotically normal, but the fourth moment condition does not. We show in \cref{ppn:thirdfourthsparse} that the fourth moment condition always implies the third moment condition in the sparse case. This together with the following example implies that the third moment condition is strictly weaker than the fourth moment condition in the sparse regime.

\begin{example}[Third moment condition does not imply the fourth moment condition]
Suppose $G_n$ consists of an $n$-star $K_{1, n}$ and $n^3$ disjoint edges. Choose $p_n = \frac{1}{n^2}$. In this case, it is easy to check that condition (b) of \cref{lem:thirdmomentsparse} holds, which implies $$\E{U_n^3} \rightarrow 0.$$ Hence, by \cref{thm:clt} $U_n \stackrel{D} \rightarrow N(0, 1)$. However, condition (b) of \cref{lem:fourthmomentsparse} does not hold. Hence, $\E{U_n^4} \nrightarrow 3$. In particular, by direct calculation using \cref{tab:fourth} it follows that $\E{U_n^4} \rightarrow\frac{13}{4}$. This shows that the third moment condition does not imply the fourth moment condition in the sparse regime.  
\label{example:sparsenormal}
\end{example}

\section{Simulations}
\label{sec:experiments}

In this section, we investigate the effect of $K$, the number of nearest neighbors, on the empirical performance of the $K$-NN-based two-sample test discussed in \cref{sec:knn}. We consider the following two examples. In both cases, the sample sizes are chosen to be independent Poisson random variables, each with mean $100$, the nominal level $\alpha=0.05$ and $K\in\{5,10,15\}$.  We construct a two-sample test with precise control on the asymptotic level as follows: Fix $\alpha \in (0, 1)$ and consider the test function that rejects the null hypothesis $H_0$ as follows:  
\begin{align}  
\label{eq:Knnthreshold}
\psi_n := \bm 1\{ T^{\times}(\mathcal G_K(\mathcal Z_{n})) < \mu_{\mathscr F}^\times - z_{\alpha}\sigma_{\mathscr F} \},
\end{align}
where $z_\alpha$ is the $(1-\alpha)$-th quantile of the standard normal distribution.

We estimate the empirical power of the test $\psi_n$ using $2000$ repetitions for each configuration.

\begin{enumerate}[label=(E\arabic*)]

\item \label{location} \textit{Location shifts}: Here, 
$$P=N(\bm{0},I_{10}) \quad \text{and} \quad Q=N(\mu\bm{u},I_{10}),$$
where $\bm{u}=\frac{1}{\sqrt{10}}(1,\ldots,1)^\top$.
This normalization ensures that $\bm{u}$ is a unit vector, so that
$\mu\geq 0$ is the Euclidean distance between the two means.
\Cref{fig:power}(a) shows the empirical power for a grid of values
of $\mu$ between $0$ and $1.5$, for each choice of $K$.
 
\item \label{mixture} \textit{Heavy-tailed mixture alternatives:} Here, 
$$P=N(\bm{0},I_2) \quad \text{and} \quad Q=(1-\varepsilon)N(\bm{0},I_2)+\varepsilon R,$$
where $R$ is the distribution of $\frac{1}{\sqrt 3} (T_1,T_2)^\top$, and $T_1$ and $T_2$ are independent $t$-distributed random variables with $3$ degrees of freedom. Since each $T_i$ has variance $3$, scaling by $\frac{1}{\sqrt 3}$ gives each coordinate unit variance. Thus, $P$ and $Q$ both have mean $\bm{0}$ and covariance matrix $I_2$. \Cref{fig:power}(b) shows the empirical power for a grid of values of $\varepsilon$ between $0$ and $1$, for each choice of $K$.
\end{enumerate}

In both examples, the empirical power increases as the alternative moves farther from the null, as expected. It also generally increases with $K$, illustrating the benefit of using more neighbors in these settings. More generally, the dependence of power on $K$ is subtle and can vary with the underlying distributions, leading to different detection rates. This dependence was investigated by \citet{Bhattacharya2020} for fixed $K$, and recently extended by \citet{Kanekar2025} to the regime $K=o(n^{\frac{1}{4}})$. A more complete understanding of how $K$ affects power and detection rates for specific alternatives remains an interesting direction for future research.

\begin{figure}[h]
\centering
\begin{tikzpicture}[x=1pt,y=1pt]
\definecolor{fillColor}{RGB}{255,255,255}
\path[use as bounding box,fill=fillColor,fill opacity=0.00] (0,0) rectangle (216.81,195.13);
\begin{scope}
\path[clip] (  0.00,  0.00) rectangle (216.81,195.13);
\definecolor{drawColor}{RGB}{0,0,0}

\path[draw=drawColor,line width= 0.4pt,line join=round,line cap=round] ( 40.03, 33.48) -- (203.78, 33.48);

\path[draw=drawColor,line width= 0.4pt,line join=round,line cap=round] ( 40.03, 33.48) -- ( 40.03, 28.08);

\path[draw=drawColor,line width= 0.4pt,line join=round,line cap=round] ( 94.61, 33.48) -- ( 94.61, 28.08);

\path[draw=drawColor,line width= 0.4pt,line join=round,line cap=round] (149.20, 33.48) -- (149.20, 28.08);

\path[draw=drawColor,line width= 0.4pt,line join=round,line cap=round] (203.78, 33.48) -- (203.78, 28.08);

\node[text=drawColor,anchor=base,inner sep=0pt, outer sep=0pt, scale=  0.90] at ( 40.03, 18.36) {0.0};

\node[text=drawColor,anchor=base,inner sep=0pt, outer sep=0pt, scale=  0.90] at ( 94.61, 18.36) {0.5};

\node[text=drawColor,anchor=base,inner sep=0pt, outer sep=0pt, scale=  0.90] at (149.20, 18.36) {1.0};

\node[text=drawColor,anchor=base,inner sep=0pt, outer sep=0pt, scale=  0.90] at (203.78, 18.36) {1.5};

\path[draw=drawColor,line width= 0.4pt,line join=round,line cap=round] ( 33.48, 38.83) -- ( 33.48,172.50);

\path[draw=drawColor,line width= 0.4pt,line join=round,line cap=round] ( 33.48, 38.83) -- ( 28.08, 38.83);

\path[draw=drawColor,line width= 0.4pt,line join=round,line cap=round] ( 33.48, 65.56) -- ( 28.08, 65.56);

\path[draw=drawColor,line width= 0.4pt,line join=round,line cap=round] ( 33.48, 92.30) -- ( 28.08, 92.30);

\path[draw=drawColor,line width= 0.4pt,line join=round,line cap=round] ( 33.48,119.03) -- ( 28.08,119.03);

\path[draw=drawColor,line width= 0.4pt,line join=round,line cap=round] ( 33.48,145.77) -- ( 28.08,145.77);

\path[draw=drawColor,line width= 0.4pt,line join=round,line cap=round] ( 33.48,172.50) -- ( 28.08,172.50);

\node[text=drawColor,anchor=base east,inner sep=0pt, outer sep=0pt, scale=  0.90] at ( 27.00, 35.73) {0.0};

\node[text=drawColor,anchor=base east,inner sep=0pt, outer sep=0pt, scale=  0.90] at ( 27.00, 62.46) {0.2};

\node[text=drawColor,anchor=base east,inner sep=0pt, outer sep=0pt, scale=  0.90] at ( 27.00, 89.20) {0.4};

\node[text=drawColor,anchor=base east,inner sep=0pt, outer sep=0pt, scale=  0.90] at ( 27.00,115.93) {0.6};

\node[text=drawColor,anchor=base east,inner sep=0pt, outer sep=0pt, scale=  0.90] at ( 27.00,142.67) {0.8};

\node[text=drawColor,anchor=base east,inner sep=0pt, outer sep=0pt, scale=  0.90] at ( 27.00,169.40) {1.0};

\path[draw=drawColor,line width= 0.4pt,line join=round,line cap=round] ( 33.48, 33.48) --
	(210.33, 33.48) --
	(210.33,177.85) --
	( 33.48,177.85) --
	cycle;
\end{scope}
\begin{scope}
\path[clip] (  0.00,  0.00) rectangle (216.81,195.13);
\definecolor{drawColor}{RGB}{0,0,0}

\node[text=drawColor,anchor=base,inner sep=0pt, outer sep=0pt, scale=  0.85] at (121.91,183.54) {Example 1: location shift, $d=10$};

\node[text=drawColor,anchor=base,inner sep=0pt, outer sep=0pt, scale=  0.90] at (121.91,  4.32) {$\mu$};

\node[text=drawColor,rotate= 90.00,anchor=base,inner sep=0pt, outer sep=0pt, scale=  0.90] at ( 10.80,105.66) {Power};
\end{scope}
\begin{scope}
\path[clip] ( 33.48, 33.48) rectangle (210.33,177.85);
\definecolor{drawColor}{gray}{0.50}

\path[draw=drawColor,line width= 0.4pt,dash pattern=on 4pt off 4pt ,line join=round,line cap=round] ( 33.48, 45.51) -- (210.33, 45.51);
\definecolor{drawColor}{RGB}{59,82,139}

\path[draw=drawColor,line width= 0.6pt,line join=round,line cap=round] ( 40.03, 45.58) --
	( 67.32, 49.92) --
	( 94.61, 71.38) --
	(121.91,116.29) --
	(149.20,158.07) --
	(176.49,171.43) --
	(203.78,172.50);
\definecolor{fillColor}{RGB}{59,82,139}

\path[fill=fillColor] ( 40.03, 45.58) circle (  1.62);

\path[fill=fillColor] ( 67.32, 49.92) circle (  1.62);

\path[fill=fillColor] ( 94.61, 71.38) circle (  1.62);

\path[fill=fillColor] (121.91,116.29) circle (  1.62);

\path[fill=fillColor] (149.20,158.07) circle (  1.62);

\path[fill=fillColor] (176.49,171.43) circle (  1.62);

\path[fill=fillColor] (203.78,172.50) circle (  1.62);
\definecolor{drawColor}{RGB}{33,145,140}

\path[draw=drawColor,line width= 0.6pt,line join=round,line cap=round] ( 40.03, 45.91) --
	( 67.32, 53.06) --
	( 94.61, 80.53) --
	(121.91,134.87) --
	(149.20,166.02) --
	(176.49,172.03) --
	(203.78,172.50);
\definecolor{fillColor}{RGB}{33,145,140}

\path[fill=fillColor] ( 40.03, 48.43) --
	( 42.21, 44.65) --
	( 37.85, 44.65) --
	cycle;

\path[fill=fillColor] ( 67.32, 55.58) --
	( 69.50, 51.80) --
	( 65.14, 51.80) --
	cycle;

\path[fill=fillColor] ( 94.61, 83.05) --
	( 96.80, 79.27) --
	( 92.43, 79.27) --
	cycle;

\path[fill=fillColor] (121.91,137.39) --
	(124.09,133.61) --
	(119.72,133.61) --
	cycle;

\path[fill=fillColor] (149.20,168.54) --
	(151.38,164.76) --
	(147.01,164.76) --
	cycle;

\path[fill=fillColor] (176.49,174.55) --
	(178.67,170.77) --
	(174.31,170.77) --
	cycle;

\path[fill=fillColor] (203.78,175.02) --
	(205.96,171.24) --
	(201.60,171.24) --
	cycle;
\definecolor{drawColor}{RGB}{249,142,9}

\path[draw=drawColor,line width= 0.6pt,line join=round,line cap=round] ( 40.03, 45.51) --
	( 67.32, 53.46) --
	( 94.61, 87.48) --
	(121.91,140.82) --
	(149.20,168.56) --
	(176.49,172.44) --
	(203.78,172.50);
\definecolor{fillColor}{RGB}{249,142,9}

\path[fill=fillColor] ( 38.41, 43.89) --
	( 41.65, 43.89) --
	( 41.65, 47.13) --
	( 38.41, 47.13) --
	cycle;

\path[fill=fillColor] ( 65.70, 51.84) --
	( 68.94, 51.84) --
	( 68.94, 55.08) --
	( 65.70, 55.08) --
	cycle;

\path[fill=fillColor] ( 92.99, 85.86) --
	( 96.23, 85.86) --
	( 96.23, 89.10) --
	( 92.99, 89.10) --
	cycle;

\path[fill=fillColor] (120.28,139.20) --
	(123.53,139.20) --
	(123.53,142.44) --
	(120.28,142.44) --
	cycle;

\path[fill=fillColor] (147.58,166.94) --
	(150.82,166.94) --
	(150.82,170.18) --
	(147.58,170.18) --
	cycle;

\path[fill=fillColor] (174.87,170.82) --
	(178.11,170.82) --
	(178.11,174.06) --
	(174.87,174.06) --
	cycle;

\path[fill=fillColor] (202.16,170.88) --
	(205.40,170.88) --
	(205.40,174.12) --
	(202.16,174.12) --
	cycle;
\definecolor{drawColor}{RGB}{59,82,139}

\path[draw=drawColor,line width= 0.6pt,line join=round,line cap=round] ( 37.31,165.78) -- ( 51.08,165.78);
\definecolor{drawColor}{RGB}{33,145,140}

\path[draw=drawColor,line width= 0.6pt,line join=round,line cap=round] ( 37.31,156.60) -- ( 51.08,156.60);
\definecolor{drawColor}{RGB}{249,142,9}

\path[draw=drawColor,line width= 0.6pt,line join=round,line cap=round] ( 37.31,147.42) -- ( 51.08,147.42);
\definecolor{fillColor}{RGB}{59,82,139}

\path[fill=fillColor] ( 44.20,165.78) circle (  1.72);
\definecolor{fillColor}{RGB}{33,145,140}

\path[fill=fillColor] ( 44.20,159.28) --
	( 46.52,155.26) --
	( 41.88,155.26) --
	cycle;
\definecolor{fillColor}{RGB}{249,142,9}

\path[fill=fillColor] ( 42.48,145.70) --
	( 45.92,145.70) --
	( 45.92,149.14) --
	( 42.48,149.14) --
	cycle;
\definecolor{drawColor}{RGB}{0,0,0}

\node[text=drawColor,anchor=base west,inner sep=0pt, outer sep=0pt, scale=  0.77] at ( 57.97,163.15) {$K=5$};

\node[text=drawColor,anchor=base west,inner sep=0pt, outer sep=0pt, scale=  0.77] at ( 57.97,153.97) {$K=10$};

\node[text=drawColor,anchor=base west,inner sep=0pt, outer sep=0pt, scale=  0.77] at ( 57.97,144.79) {$K=15$};
\end{scope}
\end{tikzpicture}
\hfill
\begin{tikzpicture}[x=1pt,y=1pt]
\definecolor{fillColor}{RGB}{255,255,255}
\path[use as bounding box,fill=fillColor,fill opacity=0.00] (0,0) rectangle (216.81,195.13);
\begin{scope}
\path[clip] (  0.00,  0.00) rectangle (216.81,195.13);
\definecolor{drawColor}{RGB}{0,0,0}

\path[draw=drawColor,line width= 0.4pt,line join=round,line cap=round] ( 40.03, 33.48) -- (203.78, 33.48);

\path[draw=drawColor,line width= 0.4pt,line join=round,line cap=round] ( 40.03, 33.48) -- ( 40.03, 28.08);

\path[draw=drawColor,line width= 0.4pt,line join=round,line cap=round] ( 72.78, 33.48) -- ( 72.78, 28.08);

\path[draw=drawColor,line width= 0.4pt,line join=round,line cap=round] (105.53, 33.48) -- (105.53, 28.08);

\path[draw=drawColor,line width= 0.4pt,line join=round,line cap=round] (138.28, 33.48) -- (138.28, 28.08);

\path[draw=drawColor,line width= 0.4pt,line join=round,line cap=round] (171.03, 33.48) -- (171.03, 28.08);

\path[draw=drawColor,line width= 0.4pt,line join=round,line cap=round] (203.78, 33.48) -- (203.78, 28.08);

\node[text=drawColor,anchor=base,inner sep=0pt, outer sep=0pt, scale=  0.90] at ( 40.03, 18.36) {0.0};

\node[text=drawColor,anchor=base,inner sep=0pt, outer sep=0pt, scale=  0.90] at ( 72.78, 18.36) {0.2};

\node[text=drawColor,anchor=base,inner sep=0pt, outer sep=0pt, scale=  0.90] at (105.53, 18.36) {0.4};

\node[text=drawColor,anchor=base,inner sep=0pt, outer sep=0pt, scale=  0.90] at (138.28, 18.36) {0.6};

\node[text=drawColor,anchor=base,inner sep=0pt, outer sep=0pt, scale=  0.90] at (171.03, 18.36) {0.8};

\node[text=drawColor,anchor=base,inner sep=0pt, outer sep=0pt, scale=  0.90] at (203.78, 18.36) {1.0};

\path[draw=drawColor,line width= 0.4pt,line join=round,line cap=round] ( 33.48, 38.83) -- ( 33.48,172.50);

\path[draw=drawColor,line width= 0.4pt,line join=round,line cap=round] ( 33.48, 38.83) -- ( 28.08, 38.83);

\path[draw=drawColor,line width= 0.4pt,line join=round,line cap=round] ( 33.48, 65.56) -- ( 28.08, 65.56);

\path[draw=drawColor,line width= 0.4pt,line join=round,line cap=round] ( 33.48, 92.30) -- ( 28.08, 92.30);

\path[draw=drawColor,line width= 0.4pt,line join=round,line cap=round] ( 33.48,119.03) -- ( 28.08,119.03);

\path[draw=drawColor,line width= 0.4pt,line join=round,line cap=round] ( 33.48,145.77) -- ( 28.08,145.77);

\path[draw=drawColor,line width= 0.4pt,line join=round,line cap=round] ( 33.48,172.50) -- ( 28.08,172.50);

\node[text=drawColor,anchor=base east,inner sep=0pt, outer sep=0pt, scale=  0.90] at ( 27.00, 35.73) {0.0};

\node[text=drawColor,anchor=base east,inner sep=0pt, outer sep=0pt, scale=  0.90] at ( 27.00, 62.46) {0.2};

\node[text=drawColor,anchor=base east,inner sep=0pt, outer sep=0pt, scale=  0.90] at ( 27.00, 89.20) {0.4};

\node[text=drawColor,anchor=base east,inner sep=0pt, outer sep=0pt, scale=  0.90] at ( 27.00,115.93) {0.6};

\node[text=drawColor,anchor=base east,inner sep=0pt, outer sep=0pt, scale=  0.90] at ( 27.00,142.67) {0.8};

\node[text=drawColor,anchor=base east,inner sep=0pt, outer sep=0pt, scale=  0.90] at ( 27.00,169.40) {1.0};

\path[draw=drawColor,line width= 0.4pt,line join=round,line cap=round] ( 33.48, 33.48) --
	(210.33, 33.48) --
	(210.33,177.85) --
	( 33.48,177.85) --
	cycle;
\end{scope}
\begin{scope}
\path[clip] (  0.00,  0.00) rectangle (216.81,195.13);
\definecolor{drawColor}{RGB}{0,0,0}

\node[text=drawColor,anchor=base,inner sep=0pt, outer sep=0pt, scale=  0.85] at (121.91,183.54) {Example 2: heavy-tail mixture, $d=2$};

\node[text=drawColor,anchor=base,inner sep=0pt, outer sep=0pt, scale=  0.90] at (121.91,  4.32) {$\varepsilon$};

\node[text=drawColor,rotate= 90.00,anchor=base,inner sep=0pt, outer sep=0pt, scale=  0.90] at ( 10.80,105.66) {Power};
\end{scope}
\begin{scope}
\path[clip] ( 33.48, 33.48) rectangle (210.33,177.85);
\definecolor{drawColor}{gray}{0.50}

\path[draw=drawColor,line width= 0.4pt,dash pattern=on 4pt off 4pt ,line join=round,line cap=round] ( 33.48, 45.51) -- (210.33, 45.51);
\definecolor{drawColor}{RGB}{59,82,139}

\path[draw=drawColor,line width= 0.6pt,line join=round,line cap=round] ( 40.03, 46.65) --
	( 72.78, 47.32) --
	(105.53, 53.80) --
	(138.28, 61.22) --
	(171.03, 79.33) --
	(203.78,107.80);
\definecolor{fillColor}{RGB}{59,82,139}

\path[fill=fillColor] ( 40.03, 46.65) circle (  1.62);

\path[fill=fillColor] ( 72.78, 47.32) circle (  1.62);

\path[fill=fillColor] (105.53, 53.80) circle (  1.62);

\path[fill=fillColor] (138.28, 61.22) circle (  1.62);

\path[fill=fillColor] (171.03, 79.33) circle (  1.62);

\path[fill=fillColor] (203.78,107.80) circle (  1.62);
\definecolor{drawColor}{RGB}{33,145,140}

\path[draw=drawColor,line width= 0.6pt,line join=round,line cap=round] ( 40.03, 46.91) --
	( 72.78, 49.39) --
	(105.53, 55.20) --
	(138.28, 67.10) --
	(171.03, 89.22) --
	(203.78,120.44);
\definecolor{fillColor}{RGB}{33,145,140}

\path[fill=fillColor] ( 40.03, 49.43) --
	( 42.21, 45.65) --
	( 37.85, 45.65) --
	cycle;

\path[fill=fillColor] ( 72.78, 51.91) --
	( 74.96, 48.13) --
	( 70.60, 48.13) --
	cycle;

\path[fill=fillColor] (105.53, 57.72) --
	(107.71, 53.94) --
	(103.35, 53.94) --
	cycle;

\path[fill=fillColor] (138.28, 69.62) --
	(140.46, 65.84) --
	(136.10, 65.84) --
	cycle;

\path[fill=fillColor] (171.03, 91.74) --
	(173.21, 87.96) --
	(168.85, 87.96) --
	cycle;

\path[fill=fillColor] (203.78,122.95) --
	(205.96,119.18) --
	(201.60,119.18) --
	cycle;
\definecolor{drawColor}{RGB}{249,142,9}

\path[draw=drawColor,line width= 0.6pt,line join=round,line cap=round] ( 40.03, 46.85) --
	( 72.78, 50.39) --
	(105.53, 56.41) --
	(138.28, 69.64) --
	(171.03, 93.77) --
	(203.78,123.24);
\definecolor{fillColor}{RGB}{249,142,9}

\path[fill=fillColor] ( 38.41, 45.23) --
	( 41.65, 45.23) --
	( 41.65, 48.47) --
	( 38.41, 48.47) --
	cycle;

\path[fill=fillColor] ( 71.16, 48.77) --
	( 74.40, 48.77) --
	( 74.40, 52.01) --
	( 71.16, 52.01) --
	cycle;

\path[fill=fillColor] (103.91, 54.79) --
	(107.15, 54.79) --
	(107.15, 58.03) --
	(103.91, 58.03) --
	cycle;

\path[fill=fillColor] (136.66, 68.02) --
	(139.90, 68.02) --
	(139.90, 71.26) --
	(136.66, 71.26) --
	cycle;

\path[fill=fillColor] (169.41, 92.15) --
	(172.65, 92.15) --
	(172.65, 95.39) --
	(169.41, 95.39) --
	cycle;

\path[fill=fillColor] (202.16,121.62) --
	(205.40,121.62) --
	(205.40,124.86) --
	(202.16,124.86) --
	cycle;
\definecolor{drawColor}{RGB}{59,82,139}

\path[draw=drawColor,line width= 0.6pt,line join=round,line cap=round] ( 37.31,165.78) -- ( 51.08,165.78);
\definecolor{drawColor}{RGB}{33,145,140}

\path[draw=drawColor,line width= 0.6pt,line join=round,line cap=round] ( 37.31,156.60) -- ( 51.08,156.60);
\definecolor{drawColor}{RGB}{249,142,9}

\path[draw=drawColor,line width= 0.6pt,line join=round,line cap=round] ( 37.31,147.42) -- ( 51.08,147.42);
\definecolor{fillColor}{RGB}{59,82,139}

\path[fill=fillColor] ( 44.20,165.78) circle (  1.72);
\definecolor{fillColor}{RGB}{33,145,140}

\path[fill=fillColor] ( 44.20,159.28) --
	( 46.52,155.26) --
	( 41.88,155.26) --
	cycle;
\definecolor{fillColor}{RGB}{249,142,9}

\path[fill=fillColor] ( 42.48,145.70) --
	( 45.92,145.70) --
	( 45.92,149.14) --
	( 42.48,149.14) --
	cycle;
\definecolor{drawColor}{RGB}{0,0,0}

\node[text=drawColor,anchor=base west,inner sep=0pt, outer sep=0pt, scale=  0.77] at ( 57.97,163.15) {$K=5$};

\node[text=drawColor,anchor=base west,inner sep=0pt, outer sep=0pt, scale=  0.77] at ( 57.97,153.97) {$K=10$};

\node[text=drawColor,anchor=base west,inner sep=0pt, outer sep=0pt, scale=  0.77] at ( 57.97,144.79) {$K=15$};
\end{scope}
\end{tikzpicture}

\caption{ \small{ Empirical power of the test $\psi_n$ for Example \ref{location} (left panel) and Example \ref{mixture} (right panel). The dashed lines show the nominal level $\alpha=0.05$. } }
\label{fig:power}
\end{figure}

\normalsize

\small

\bibliographystyle{abbrvnat}
\bibliography{bibliography}

\appendix

\section{Motif Tables}
\label{app:tables}

In this section, we collect the coefficients appearing in the motif expansions used in the proofs, together with elementary bounds on the corresponding motif counts.

\begin{itemize}

\item[$(i)$] \cref{tab:varV} lists the coefficients $c_H(u_n)$ in the motif expansion \eqref{eq:VarVmotif1} of $\Var{V_n}$, used in the proof of \cref{lemma:Varlemma}.

\item[$(ii)$] \cref{tab:third} lists the coefficients $c_H^{(3)}(u_n)$ in the motif expansion
\eqref{eq:H_moment_expansion} of $\E{S_n^3}$.

\item[$(iii)$] \cref{tab:fourth} lists the coefficients $c_H^{(4)}(u_n)$ in the motif expansion \eqref{eq:fourth_moment_expansion} of $\E{S_n^4}$.

\end{itemize}

In each table, the last column gives an asymptotic bound on the corresponding motif count, obtained using the identities $\sum_{i=1}^n  d_{G_n}(i) = 2\NH{\Ktwo}$ and $\sum_{i=1}^n \binom{ d_{G_n}(i)}{2} = \NH{\Pthree}$ and elementary counting arguments.

\begin{center}
\small
    \renewcommand{\arraystretch}{2}
    \begin{longtable}{|l|l|l|}
        \caption{Coefficients $c_H(u_n)$ in \eqref{eq:VarVmotif1} and the order of the motif counts.  }
        \label{tab:varV}\\
        \hline
        \textbf{Motif $H$} & \textbf{Coefficient $c_H(u_n)$} & \textbf{Order of $\NH{H}$}  \\
        \hline
        \endfirsthead
        \hline
        \textbf{Motif $H$} & \textbf{Coefficient $c_H(u_n)$} & \textbf{Order of $\NH{H}$}  \\
        \hline
        \endhead
        \hline
        \endfoot
        \hline
        \endlastfoot
            $K_2$ (\Ktwo) & $\displaystyle \frac{u_n^2(1-u_n^2)}{4}$ & $O(\NH{\Ktwo})$ \\ \hline
            $P_3$ (\Pthree) & $\displaystyle \frac{(1-u_n)(14u_n^3 - 2u_n^2 + u_n + 1)}{4}$ & $O(\NH{\Pthree})$ \\ \hline
            $K_3$ (\Kthree) & $- 3 u_n(1-u_n)^2(3 u_n + 1)$ & $O(\NH{\Pthree})$ \\ \hline
            $P_4$ (\Pfour) & $\displaystyle -\frac{5u_n^2(1-u_n)^2}{2}$ & $O(\NH{\Ktwo} \NH{\Pthree})$ \\ \hline
            $K_{1,3}$ (\Konethree) & $\displaystyle \frac{3 u_n(1-u_n)(6 u_n^2 - u_n + 1)}{2}$ & $O(\NH{\Ktwo} \NH{\Pthree})$ \\ \hline
            $P_5$ (\Pfive) & $\displaystyle \frac{u_n(1-u_n)^3}{2}$ & $O(\NH{\Ktwo} \NH{\Pthree})$ \\ \hline
            $K_{1,4}$ (\Konefour) & $\displaystyle 6u_n^3(1-u_n)$ & $O(\NH{\Pthree}^2)$ \\ \hline
            $T_{3,2}$ (\Tthreetwo) & $\displaystyle -u_n^2(1-u_n)^2$ & $O(\NH{\Ktwo}  \NH{\Pthree})$ \\ \hline
            $C_4$ (\Cfour) & $\displaystyle (1-u_n)^2(1 + 2 u_n - 2u_n^2)$ & $O(\NH{\Ktwo} \NH{\Pthree})$ \\ \hline
            $\mathrm{paw}$ (\Paw) & $\displaystyle -3 u_n^2(1-u_n)^2$ & $O(\NH{\Ktwo} \NH{\Pthree})$\\ \hline
    \end{longtable}
\end{center}

\begin{center}
\small
    \renewcommand{\arraystretch}{2}
    \begin{longtable}{|l|l|l|}
        \caption{Coefficients $c_H^{(3)}(u_n)$ in the expansion of $\E{S_n^3}$ and the order of the motif counts.}
        \label{tab:third}\\
        \hline
        \textbf{Motif $H$} & \textbf{Coefficient $c_H^{(3)}(u_n)$} & \textbf{Order of $\NH{H}$}  \\
        \hline
        \endfirsthead
        \hline
        \textbf{Motif $H$} & \textbf{Coefficient $c_H(u_n)$} & \textbf{Order of $\NH{H}$}  \\
        \hline
        \endhead
        \hline
        \endfoot
        \hline
        \endlastfoot
            $K_2$ (\Ktwo) & $\displaystyle -\frac{u_n(1-u_n^2)}{4}$ & $O(\NH{\Ktwo})$ \\ \hline
            $P_3$ (\Pthree) & $\displaystyle -\frac{3u_n^2(1-u_n)}{2}$ & $O(\NH{\Pthree})$ \\ \hline
            $K_3$ (\Kthree) & $\displaystyle \frac{3(1-u_n)^2(1+2 u_n)}{4}$ & $O(\NH{\Pthree})$ \\ \hline
            $P_4$ (\Pfour) & $\displaystyle \frac{3 u_n (1- u_n)^2}{4}$ & $O(\NH{\Ktwo} \NH{\Pthree})$ \\ \hline
            $K_{1,3}$ (\Konethree) & $\displaystyle -\frac{3u_n^2(1-u_n)}{2}$ & $O(\NH{\Ktwo} \NH{\Pthree})$ \\ \hline  
    \end{longtable}
\end{center}

\begin{center}
\small
\renewcommand{\arraystretch}{2}
    \begin{longtable}{|l|l|l|}
        \caption{Coefficients $c_H^{(4)}(u_n)$ in the expansion of $\E{S_n^4}$ and the order of the motif counts.}
        \label{tab:fourth}\\
        \hline
        \textbf{Motif $H$} & \textbf{Coefficient $c_H^{(4)}(u_n)$} & \textbf{Order of $\NH{H}$}  \\
        \hline
        \endfirsthead
        \hline
        \textbf{Motif $H$} & \textbf{Coefficient $c_H(u_n)$} & \textbf{Order of $\NH{H}$}  \\
        \hline
        \endhead
        \hline
        \endfoot
        \hline
        \endlastfoot
        $K_2$ (\Ktwo) & $\displaystyle \frac{(1-u_n^2)(1+3u_n^2)}{16}$ & $O(\NH{\Ktwo})$ \\ \hline
        $P_3$ (\Pthree) & $\displaystyle \frac{(1-u_n)(21u_n^3 - 3u_n^2 + 7u_n + 3)}{8}$ & $O(\NH{\Pthree})$ \\ \hline
        $2K_2$ (\Ktwo$\sqcup$\Ktwo) & $\displaystyle \frac{3(1-u_n^2)^2}{8}$ & $O(\NH{\Ktwo}^2)$ \\ \hline
        $K_3$ (\Kthree) & $\displaystyle -\frac{9u_n(1-u_n)^2(3u_n+1)}{4}$ & $O(\NH{\Pthree})$ \\ \hline
        $P_4$ (\Pfour) & $\displaystyle -\frac{3u_n(1-u_n)^2(2u_n-1)}{2}$ & $O(\NH{\Ktwo} \NH{\Pthree})$ \\ \hline
        $K_{1,3}$ (\Konethree) & $\displaystyle \frac{9u_n(1-u_n)(3u_n^2+1)}{4}$ & $O(\NH{\Ktwo} \NH{\Pthree})$ \\ \hline
        $K_2\sqcup P_3$ (\Ktwo$\sqcup$\Pthree) & $\displaystyle \frac{3u_n(1-u_n^2)(1-u_n)}{4}$ & $O(\NH{\Ktwo} \NH{\Pthree})$ \\ \hline
        $P_5$ (\Pfive) & $\displaystyle \frac{3u_n(1-u_n)^2}{2}$ & $O(\NH{\Ktwo} \NH{\Pthree})$ \\ \hline
        $K_{1,4}$ (\Konefour) & $\displaystyle \frac{3u_n^2(1-u_n)(3u_n+1)}{2}$ & $O(\NH{\Pthree}^2)$ \\ \hline
        $T_{3,2}$ (\Tthreetwo) & $\displaystyle -\frac{3u_n^2(1-u_n)^2}{2}$ & $O(\NH{\Ktwo}  \NH{\Pthree})$ \\ \hline
        $C_4$ (\Cfour) & $\displaystyle \frac{3(1-u_n)^2(1+2u_n-u_n^2)}{2}$ & $O(\NH{\Ktwo} \NH{\Pthree})$ \\ \hline
        $\mathrm{paw}$ (\Paw) & $\displaystyle -3u_n^2(1-u_n)^2$ & $O(\NH{\Ktwo} \NH{\Pthree})$ \\ \hline
        $2P_3$ (\Pthree$\sqcup$\Pthree) & $\displaystyle \frac{3u_n^2(1-u_n)^2}{2}$ & $O(\NH{\Pthree}^2)$\\ \hline
    \end{longtable}
\end{center}

\normalsize

\section{Universal Consistency}

In this section we establish universal consistency of the undirected $K$-NN test. 

\begin{prop}\label{ppn:KNNconsistency}
Let $T^{\times}(\mathcal G_K(\mathcal Z_{n}))$ be as defined in \eqref{eq:Tpoissonc}. Suppose $\lambda_n := \frac{n_1}{n}  \rightarrow \lambda  \in  (0, 1)$ and either $K \geq 1$ fixed or $K_n \rightarrow \infty$ with $K_n = o(n).$
Then,  as $n\to\infty$,
\begin{equation}\label{eq:Tfg}
\frac{T^{\times}(\mathcal G_K(\mathcal Z_{n}))}{ \NHK{\Ktwo} }  \stackrel{P} \rightarrow \delta_\lambda(f, g) :=  \int \frac{ 2 \lambda (1-\lambda)  f(x) g(x)}{ \lambda f(x) + (1-\lambda) g(x) } \mathrm {d} x  .  
\end{equation}
Further, $\delta_\lambda(f, g) \leq \delta_\lambda(f, f)= 2 \lambda (1-\lambda)$ and equality holds if and only if $f=g$ almost everywhere.    
\end{prop}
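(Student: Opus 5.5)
The plan is to write the cross-edge count as a sum of local contributions from each point and then apply the standard Poisson-process machinery for nearest-neighbour functionals. For each $z\in\mathcal Z_n$ I set
$$\xi_z:=\sum_{y:\{z,y\}\in E(\mathcal G_K(\mathcal Z_n))}\bm 1\{c_z\ne c_y\},$$
so that $T^{\times}=\frac12\sum_z\xi_z$, and similarly $2\NHK{\Ktwo}=\sum_z d_z$. By \eqref{eq:vertexdegree} we have $\tilde K\le d_z\le B_d\tilde K$, so the normalized quantities $\xi_z/\tilde K$ and $d_z/\tilde K$ are bounded. Writing
$$\frac{T^\times}{\NHK{\Ktwo}}=\frac{\sum_z\xi_z}{\sum_z d_z},$$
the problem splits into two parts. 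I will show that $\frac{1}{n\tilde K}\sum_z\xi_z$ and $\frac1{n\tilde K}\sum_z d_z$ each concentrate around deterministic limits, and that the ratio of those limits is $\delta_\lambda(f,g)$.

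\textbf{Local limit.} Condition on a point at location $x$. Its neighbours lie within distance $r_n(x)\to0$, and this holds in probability for fixed $K$ and also for $K=o(n)$, since the $K$-th neighbour radius is of order $(K/(n\phi(x)))^{1/d}\to0$ at Lebesgue points of $\phi=\lambda f+(1-\lambda)g$. The neighbours' labels are then asymptotically independent of the point's own label. The label of $x$ is $1$ with probability $\pi(x):=\lambda f(x)/\phi(x)$, and a neighbour at $y\approx x$ has label $1$ with probability $\pi(y)\approx\pi(x)$, by Lebesgue differentiation applied to $f$ and $g$. Hence
$$\E{\xi_z\mid z=x,\,d_z}\approx 2\pi(x)(1-\pi(x))\,d_z.$$
To make this rigorous uniformly in a growing $K$, I will use the Mecke formula and bound
$$\frac{1}{\tilde K}\sum_{y\sim z}|\pi(y)-\pi(z)|,$$
whose expectation tends to zero. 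For continuous compactly supported $f,g$ this follows from $r_n\to0$, and the general case follows by $L^1$ approximation together with the bounded-degree property, as in Stone's lemma \cite{Stone1977}. Averaging over $x\sim\phi$ requires one extra step, because the degree $d_z$ and the label mismatch are correlated only through the location. Since the degree is label-free, I will work with $\sum_z d_z\,\bigl(2\pi(z)(1-\pi(z))\bigr)$ relative to $\sum_z d_z$. This is where the main obstacle lies: the ratio is a degree-weighted average. For the undirected graph, $d_z/\tilde K$ need not converge to a constant. I would handle this by writing $\xi_z$ in terms of directed pairs, namely $z\to y$ if $y$ is among the $K$-NN of $z$, and counting each undirected edge with weight $1/(\text{number of directions})$. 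Both numerator and denominator then have the form $\sum_z\sum_{y\in N_K(z)} w(z,y)h(z,y)$ with $w\in\{\tfrac12,1\}$ and label-free weights $w$. If $\pi$ is nearly constant on each neighbourhood, the ratio is approximately a $w$-weighted average of $2\pi(1-\pi)$. The remaining claim to establish is that the $w$-weighted empirical measure of $z$ converges to $\phi$. By scale invariance of the local Poisson picture, the fraction of mutual neighbours among the directed relations of a point converges to a constant $\kappa_{d,K}$ that does not depend on the local intensity. I expect this to be the hardest part, particularly when $K\to\infty$. If it fails, I would fall back on a Lipschitz-density approximation combined with a localized law of large numbers on small cubes.

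\textbf{Concentration and the final inequality.} Concentration follows from the Efron–Stein inequality for Poisson functionals: adding or removing one point changes $\sum_z\xi_z$ by $O(\tilde K)$, using bounded in-degree. This gives variance $O(n\tilde K^2)$, which is $o((n\tilde K)^2)$. Combined with the label independence, this yields the convergence in probability in \eqref{eq:Tfg}, since the limit is
$$\int 2\pi(1-\pi)\,\phi=\int\frac{2\lambda(1-\lambda)fg}{\lambda f+(1-\lambda)g}.$$
Finally, the inequality comes from $fg/(\lambda f+(1-\lambda)g)\le \lambda g+(1-\lambda)f$, which is equivalent to $\lambda(1-\lambda)(f-g)^2\ge0$. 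Integrating gives $\delta_\lambda\le 2\lambda(1-\lambda)$, with equality if and only if $f=g$ almost everywhere.
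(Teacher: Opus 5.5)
Your outline is sound except at the step you yourself flag as ``the hardest part''. That step is the core of the proposition, so as written the proof has a genuine gap.

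After label smoothing, numerator and denominator both have the form $\frac{1}{nK}\sum_{z}w(z)d_z$, with $w=2\pi(1-\pi)$ and $w\equiv 1$ respectively. By the Mecke formula,
$$\frac{1}{nK}\E{\sum_{z\in\mathcal Z_n}w(z)d_z}=\int w(z)\,\frac{\E{d_{\mathcal G_K(\mathcal P_{n\phi_n}^{z})}(z)}}{K}\,\phi_n(z)\,\mathrm dz .$$
The ratio tends to $\delta_\lambda(f,g)$ only if $\E{d_{\mathcal G_K(\mathcal P_{n\phi_n}^{z})}(z)}/K$ converges to a constant independent of $z$, for a.e.\ $z$ with $\phi(z)>0$. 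A non-constant limit $c(z)$ would instead give $\int c\,w\phi/\int c\,\phi$. This also shows that your worry that $d_z/\tilde K$ need not converge to a constant is beside the point. Given your $O(n\tilde K^2)$ variance bound, only the Palm mean of the degree matters, so the directed-pair reweighting is unnecessary.

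For fixed $K$, your scale-invariance idea is the right one. The paper uses it via stabilization of the $K$-NN graph at Lebesgue points, which gives $\gamma=\frac1K\E{d_{\mathcal G_K(\mathcal P_1^0)}(0)}$. For $K=K_n\to\infty$, however, the local configuration is never an exact homogeneous Poisson process. You give no argument that the density variation on the scale $r_n$ changes the expected degree by only $o(K)$, and the fallback to ``a localized law of large numbers on small cubes'' is not such an argument.

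The missing ingredient is \cref{lem:degree}: if $K\to\infty$ with $K=o(n)$, then $\E{d_{\mathcal G_K(\mathcal P_{n\phi_n}^{z})}(z)}/K\to1$ at every common Lebesgue point $z$ of $f,g$ with $\phi(z)>0$.
\begin{itemize}
\item The lower bound is just the out-degree.
\item For the upper bound, choose $r_n$ with $n\phi_n(z)|B(z,r_n)|=(1+\varepsilon)K$, and take a finite $\delta$-net $\{u_s\}$ of the unit sphere.
\item If $|y-z|>r_n$ and $u_s$ is within $\delta$ of the direction of $y-z$, then every point of $B(z+r_nu_s,(1-2\delta)r_n)$ is strictly closer to $y$ than $z$ is.
\item By Lebesgue differentiation and Chebyshev, with probability $1-O(1/K)$ each such ball holds at least $K+1$ points and $B(z,r_n)$ holds at least $K$.
\item On that event, $z$ neither selects nor is selected by any point outside $B(z,r_n)$. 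Hence $K\le d_z\le N_0:=|B(z,r_n)\cap\mathcal P_{n\phi_n}|$, with $\E{N_0}=(1+\varepsilon+o(1))K$. The bound $d_z\le B_dK$ handles the complement.
\end{itemize}
Inserting this into the Mecke formula and using dominated convergence gives the same $\gamma=1$ for every $w$, which closes your argument.

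The rest of your plan is fine, and parts of it differ from the paper:
\begin{itemize}
\item You smooth $\pi$ by $L^1$ approximation combined with the in-degree bound (Stone's lemma). This replaces the paper's Lebesgue-point and cone localization for that step.
\item Your Poincar\'e/Efron--Stein concentration matches \cref{lem:varianceU}.
\item Your pointwise inequality $fg\le(\lambda f+(1-\lambda)g)(\lambda g+(1-\lambda)f)$, which is equivalent to $\lambda(1-\lambda)(f-g)^2\ge0$, is a correct, self-contained replacement for the paper's citation of Gy\"orfi--Nemetz, including the equality case.
\end{itemize}
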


The proof, given in \cref{sec:consistencypf}, works directly with Lebesgue points of $f$ and $g$ and, unlike in \cite{Kanekar2025}, requires no additional continuity, smoothness, or support assumptions on the densities.  

Recall that  we can construct a universally consistent two-sample test with precise control on the asymptotic level as follows: Fix $\alpha \in (0, 1)$ and consider the test function that rejects the null hypothesis $H_0$ as follows:  
\begin{align}  
\label{eq:Knnthreshold1}
\psi_n := \bm 1\{ T^{\times}(\mathcal G_K(\mathcal Z_{n})) < \mu_{\mathscr F}^\times - z_{\alpha}\sigma_{\mathscr F} \},
\end{align}
where $z_\alpha$ is the $(1-\alpha)$-th quantile of the standard normal distribution. The following result is a direct corollary of the above results.

\begin{corollary}\label{cor:levelconsistency}
Fix $\alpha \in (0, 1)$ and let $\psi_n$ be as defined above. Suppose either $K \geq 1$ is fixed or $K_n \rightarrow \infty$ with $K_n = o(n).$ Then the following hold, whenever 
$\lambda_n= \frac{n_1}{n}  \rightarrow \lambda  \in  (0, 1) \backslash \{ \frac{1}{2}\}$ or $n_1 = n_2$: 
\begin{itemize}
\item[$(i)$] (Asymptotic Type I error control) $\lim_{n \rightarrow \infty}\mathbb P_{H_0}(\psi_n=1) = \alpha$
\item[$(ii)$] (Universal consistency) For any two distributions $f \ne g$ in $H_1$, $\lim_{n \rightarrow \infty}\mathbb P_{H_1}(\psi_n=1) = 1$. 
\end{itemize}
\end{corollary}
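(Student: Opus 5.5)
The corollary combines \cref{thm:cltH0} (null limit) with \cref{ppn:KNNconsistency} (law of large numbers under the alternative). Throughout I work conditionally on $\mathscr F$ and write $U_n^\times:=U_n^\times(\mathcal G_K(\mathcal Z_n))$.

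\emph{Part (i).} On $\{\sigma_{\mathscr F}>0\}$ we have the identity
$$\{\psi_n=1\}=\{U_n^\times<-z_\alpha\},$$
and $\mathbb P(\sigma_{\mathscr F}=0)=\mathbb P(L_n\le1)\to0$. By \cref{thm:cltH0}, in either regime $\lambda_n\to\lambda\neq\frac12$ or $n_1=n_2$ (with $K$ fixed or $K_n=o(n)$), $\Wass(U_n^\times,N(0,1))\to0$. Wasserstein convergence implies convergence in distribution. Since $\Phi$ is continuous at $-z_\alpha$, this gives
$$\mathbb P_{H_0}(U_n^\times<-z_\alpha)\to\Phi(-z_\alpha)=\alpha.$$

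\emph{Part (ii).} Divide the rejection event by $N:=\NHK{\Ktwo}$. The threshold becomes
$$\frac{\mu_{\mathscr F}^\times}{N}-z_\alpha\frac{\sigma_{\mathscr F}}{N}=2\lambda_n(1-\lambda_n)-z_\alpha\frac{\sigma_{\mathscr F}}{N}.$$
The first term tends to $2\lambda(1-\lambda)$. For the second term, use \eqref{eq:varianceT} together with \cref{lem:NHK}: we have $\sigma_{\mathscr F}^2\lesssim N+\NHK{\Pthree}\lesssim L_n\tilde K^2$, while $N\ge \tfrac12 L_n\tilde K$. Hence
$$\frac{\sigma_{\mathscr F}}{N}\lesssim \frac{1}{\sqrt{L_n}}\to0 \quad\text{in probability,}$$
because $L_n/n\to1$ in probability. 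This bound needs no restriction on $K$, and the constants are uniform in $\lambda_n$ because $\nu_n\in[0,1]$. So the normalized threshold converges in probability to $2\lambda(1-\lambda)$.

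By \cref{ppn:KNNconsistency}, $T^\times/N\to\delta_\lambda(f,g)$ in probability. Since $f\neq g$ on a set of positive measure, the equality clause gives $\delta_\lambda(f,g)<2\lambda(1-\lambda)$. Set $\eta:=2\lambda(1-\lambda)-\delta_\lambda(f,g)>0$. With probability tending to one, $T^\times/N<\delta_\lambda+\eta/3$ and the normalized threshold exceeds $2\lambda(1-\lambda)-\eta/3$. On this event $\psi_n=1$, so $\mathbb P_{H_1}(\psi_n=1)\to1$.

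\emph{Obstacle.} There is essentially none beyond bookkeeping. The only point needing care is that the variance bound in (ii) is uniform in $K_n$ and in $\lambda_n$. One also checks that $N\to\infty$ in probability, which follows from $L_n\to\infty$ and $\tilde K\ge1$. Both facts come directly from \cref{lem:NHK}.
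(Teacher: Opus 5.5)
Your proposal is correct and follows essentially the same route as the paper: part $(i)$ is read off from \cref{thm:cltH0}, and part $(ii)$ bounds $\sigma_{\mathscr F}^2/\NHK{\Ktwo}^2\le B_d^2/(2L_n)$ via \eqref{eq:varianceT} and \cref{lem:NHK}, then combines this with \cref{ppn:KNNconsistency} and the strict inequality $\delta_\lambda(f,g)<2\lambda(1-\lambda)$. The only difference is that you spell out two details the paper leaves implicit: the negligible event $\{L_n\le 1\}$, and the step from Wasserstein convergence to $\mathbb P_{H_0}(U_n^\times<-z_\alpha)\to\alpha$.
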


The proof of \cref{cor:levelconsistency} is given in \cref{sec:levelconsistencypf}.

\subsection{Proof of \cref{ppn:KNNconsistency} }
\label{sec:consistencypf}

Let $\mathcal P_1$ denote a homogeneous Poisson process of unit intensity in $\R^d$, and let $\mathcal P_{n\phi_n}$ denote an inhomogeneous Poisson process in $\R^d$ with intensity function
$n\phi_n$, where $\phi_n(z)=\lambda_n f(z)+(1-\lambda_n)g(z)$. For a finite set $S\subset\R^d$, we denote by $\mathcal P_1^S$ and $\mathcal P_{n\phi_n}^S$ the respective point processes obtained by adding to them the points of $S$. The proof of \cref{ppn:KNNconsistency} will require the following result about the limiting value of the expected degree of a vertex in a $K$-NN graph, in the diverging $K$ regime.

\begin{lemma}
\label{lem:degree} 
Suppose $K = K_n \to\infty$ such that $K=o(n)$. Then, for almost every $z \in \R^d$ with $\phi (z) := \lambda f(z) + (1-\lambda) g(z) > 0$,   \begin{equation}\label{eq:degreeG}
\frac{1}{K} \E{d_{\mathcal G_K(\mathcal P_{n\phi_n}^{z})}(z)} \rightarrow 1. 
\end{equation}
\end{lemma}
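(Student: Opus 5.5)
Fix a point $z$ that is a Lebesgue point of both $f$ and $g$ with $\phi(z)>0$; almost every $z$ qualifies. The degree of $z$ in $\mathcal G_K(\mathcal P_{n\phi_n}^{z})$ splits as
\[
d(z) = K + N_n(z),
\]
where $K$ counts the out-neighbors of $z$. The term $N_n(z)$ counts points $y\in\mathcal P_{n\phi_n}$ that have $z$ among their $K$ nearest neighbors but are not among the $K$ nearest neighbors of $z$. Since $d(z)\ge K$, the lower bound $\liminf \E{d(z)}/K\ge1$ is automatic. What has to be shown is $\E{N_n(z)}=o(K)$.

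To bound $N_n(z)$, let $R_K(z)$ be the distance from $z$ to its $K$-th nearest neighbor. Any $y$ counted in $N_n(z)$ satisfies $\|y-z\|>R_K(z)$, and the ball $B(y,\|y-z\|)$ contains fewer than $K$ other points of the process. By the Mecke formula,
\[
\E{N_n(z)}\le \int n\phi_n(y)\,\mathbb P\bigl(\mathcal P_{n\phi_n}(B(y,\|y-z\|))<K,\ \|y-z\|>R_K(z)\bigr)\,dy .
\]
I would write $r=\|y-z\|$ and compare with the Poisson count. Let $\mu_n(y):=n\int_{B(y,r)}\phi_n$. The event $\{\mathcal P(B(y,r))<K\}$ forces $\mu_n(y)$ to be at most of order $K$ on an event of large probability. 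The event $\{R_K(z)<r\}$ forces $n\int_{B(z,r)}\phi_n\gtrsim K$.

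Next I would use the Lebesgue point property at the scale $r_n$ defined by $n\phi(z)\omega_d r_n^d=K$. Since $K=o(n)$, we have $r_n\to0$, and $\phi_n\to\phi$ in $L^1$ because $\lambda_n\to\lambda$. On the ball $B(z,Cr_n)$ the measure $n\phi_n$ is therefore $(1+o(1))$ times the uniform measure with density $n\phi(z)$. For a homogeneous Poisson process, the expected number of $y$ with $z$ in their $K$-NN but not conversely is $o(K)$ as $K\to\infty$. This holds because both radii concentrate at $r_n(1\pm O(K^{-1/2}))$ (Poisson concentration), so the count is confined to the annulus $r_n(1\pm O(\sqrt{\log K/K}))$, whose mass is $o(K)$.

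The main obstacle is that a Lebesgue point gives only averaged control, not pointwise density bounds. A ball $B(y,r)$ centered away from $z$ could have anomalously small $\phi_n$-mass, and then such $y$ would contribute. I would handle this by splitting on $r$. Points with $r\le Cr_n$ are treated with the annulus argument, using that $B(y,r)\subset B(z,2r)$ and that $\int_{B(y,r)}\phi_n$ differs from $\phi(z)\omega_d r^d$ by at most $\int_{B(z,2r)}|\phi_n-\phi(z)|=o(r_n^d)$, uniformly in such $y$. Points with $r>Cr_n$ need the geometric fact that any $y$ has $z$ among its $K$ nearest only if $z$ lies in a cone-type region. I would invoke the Stone cone-covering argument behind \eqref{eq:vertexdegree}, which shows that $z$ has at most $B_dK$ in-neighbors. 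The expected number of in-neighbors at distance greater than $Cr_n$ is then bounded by $B_d K\,\mathbb P(R_K(z)>Cr_n/2)\to0$ relative to $K$ after letting $C\to\infty$.

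Combining the two ranges gives $\E{N_n(z)}=o(K)$, which proves \eqref{eq:degreeG}.
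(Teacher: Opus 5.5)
The step that fails is your far-range bound. You bound the expected number of in-neighbors of $z$ at distance greater than $Cr_n$ by $B_dK\,\mathbb P(R_K(z)>Cr_n/2)$. That implicitly asserts that no such in-neighbor exists on the event $\{R_K(z)\le Cr_n/2\}$, and this implication is false.

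A point $w$ with $\|w-z\|<\|y-z\|$ need not be closer to $y$ than $z$ is. This is guaranteed when the angle between $w-z$ and $y-z$ is below $\pi/3$ (law of cosines). It fails, for instance, when $w$ lies on the far side of $z$ from $y$. So if the $K$ nearest neighbors of $z$ sit in directions away from $y$, the point $y$ can still select $z$. The isotropic radius $R_K(z)$ does not control in-neighbors.

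What you need is a directional count, which is exactly the cone fact you mention but then do not use. Cover the directions at $z$ by finitely many cones $\mathcal C_j$ of angular diameter $<\pi/3$. Set $\mathcal E=\{|\mathcal P_{n\phi_n}\cap\mathcal C_j\cap B(z,Cr_n)|\ge K \text{ for all } j\}$.

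- On $\mathcal E$, every $y\in\mathcal C_j$ with $\|y-z\|>Cr_n$ has at least $K$ points strictly closer to it than $z$. So there are no far in-neighbors.
- Each cone count has mean $(c_jC^d+o(1))K$ by the Lebesgue point property.
- Choose $C$ with $\min_j c_jC^d\ge 2$. Then Chebyshev and a union bound give $\mathbb P(\mathcal E^c)=O(1/K)$.
- The far contribution is therefore at most $B_dK\cdot O(1/K)=O(1)=o(K)$.

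This is the argument the paper uses in the proof of \cref{lem:expectationh}.

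There is also a smaller inaccuracy in the near range. The Lebesgue-point comparison only gives that the Poisson means of $B(y,r)$ and $B(z,r)$ equal $K(r/r_n)^d+o(K)$, uniformly for $r\le Cr_n$. Since $o(K)$ need not be $O(\sqrt K)$, the annulus $r_n(1\pm O(\sqrt{\log K/K}))$ is not justified. Use a fixed relative window $|(r/r_n)^d-1|\le\eta$ instead:

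- Its $n\phi_n$-mass is $(2\eta+o(1))K$.
- Outside it, Chebyshev bounds the probability of the relevant event by $O_{C,\eta}(1/K)$. Either $B(y,r)$ has fewer than $K$ points, or $B(z,r)$ has at least $K$ points. Those $y$ therefore contribute $O_{C,\eta}(1)$.
- Then let $\eta\to0$.

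Also, $d(z)\ge\min\{K,|\mathcal P_{n\phi_n}|\}$ rather than $d(z)\ge K$; this is harmless.

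With these repairs your Mecke-formula route works, but the paper's argument is simpler and needs no near/far split. It takes $r_n$ with $n\phi_n(z)|B(z,r_n)|=(1+\varepsilon)K$ and a $\delta$-net $\{u_s\}$ of the sphere. It then works on the event that $B(z,r_n)$ has at least $K$ points and each ball $B(z+r_nu_s,(1-2\delta)r_n)$ has at least $K+1$ points; this event has probability $1-O(1/K)$.

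On that event every neighbor of $z$ lies in $B(z,r_n)$. The reason is that each such ball sits inside $B(y,\|y-z\|)$ for every $y$ beyond $r_n$ in a direction within $\delta$ of $u_s$. Hence $\mathbb E\, d(z)\le(1+\varepsilon+o(1))K$, with all distances handled by one event.
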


The proof of \cref{lem:degree} is given later in \cref{sec:nearestneigborpf}. We first proceed with the proof of \cref{ppn:KNNconsistency}, assuming this result. We begin by computing the expected value of $T^{\times}(\mathcal G_K(\mathcal Z_{n}))$. To this end, define,  
$$h_n(x, y)=\frac{n_1n_2 f(x)g(y)}{(n_1f(x)+n_2g(x))(n_1f(y)+n_2g(y))}$$
and 
\begin{align}\label{eq:h}
h(x, y)=\frac{ \lambda (1-\lambda) f(x)g(y)}{( \lambda f(x)+ (1- \lambda) g(x))( \lambda f(y)+ (1- \lambda) g(y))} . 
\end{align} 
Note that $\|h_n-h\|_\infty \rightarrow 0$.  Conditionally on $\mathscr F$, the labels are independent, and
$h_n(Z_i,Z_j)$ is the probability that $Z_i$ has label $1$
and $Z_j$ has label $2$. Therefore,
\begin{align}
\E{T^{\times}(\mathcal G_K(\mathcal Z_n)) | \mathscr F} = \sum_{1\le i\ne j\le L_n} h_n(Z_i,Z_j) \bm 1 \{\{ Z_i,Z_j\}\in E(\mathcal G_K(\mathcal Z_n))\}.
\label{eq:ETn}
\end{align}
Note that 
\begin{align}
& \frac{1}{nK} \left| \sum_{1\le i\ne j\le L_n} \{h_n(Z_i,Z_j)-h(Z_i,Z_j)\}
\mathbf 1 \{\{Z_i,Z_j\}\in E(\mathcal G_K(\mathcal Z_n)) \}
\right|  \nonumber  \\  
& \leq 2\|h_n-h\|_\infty \frac{|E(\mathcal G_K(\mathcal Z_n))|}{nK}  \nonumber \\ 
& \leq 2\|h_n-h\|_\infty\frac{L_n}{n} =o_{L^1}(1)  ,  
\label{eq:ETnF}
\end{align}
where we used $|E(\mathcal G_K(\mathcal Z_n))|\le KL_n$ and $\mathbb E L_n=n$. Hence, by \eqref{eq:ETn}, 
\begin{align*}
\frac{1}{nK} \E{T^{\times}(\mathcal G_K(\mathcal Z_n)) | \mathscr F } & = \frac{1}{nK} \sum_{1\le i\ne j\le L_n} h(Z_i,Z_j) \bm 1 \{\{Z_i,Z_j\}\in E(\mathcal G_K(\mathcal Z_n)) \} +o_{L^1}(1).
\end{align*}
The following lemma computes the limit of the expected value of the sum above.

\begin{lemma}\label{lem:expectationh}
Suppose either $K$ is fixed or $K=K_n\to\infty$ with $K_n=o(n)$. Let $H : \mathbb R^d\times\mathbb R^d\to\mathbb R$ be a jointly measurable and bounded function, such that for almost every $z \in \R^d$, $H(z, \cdot)$ is measurable and $z$ is a Lebesgue point of the function $\phi(\cdot) H(z, \cdot)$. Then 
\begin{align*}
&\frac{1}{nK} \E{ \sum_{1\le i\ne j\le L_n} H(Z_i,Z_j) \mathbf 1 \{\{Z_i,Z_j\}\in E(\mathcal G_K(\mathcal Z_n)) \} }  \rightarrow \gamma\int_{\mathbb R^d}H(z,z)\phi(z) \mathrm{d} z . 
\end{align*} 
\begin{align*}
\gamma= \begin{cases}
\frac{1}{K} \E{d_{\mathcal G_K(\mathcal P_1^0)}(0) } , &\text{if $K$ is fixed},\\ 
1,
&\text{if $K=K_n\to\infty$}.
\end{cases}
\end{align*}  
\label{eq:expectationconstant} 
\end{lemma}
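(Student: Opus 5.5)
We will use the Slivnyak--Mecke formula for the Poisson process $\mathcal Z_n$ with intensity $n\phi_n$. Applied to the ordered-pair sum, it gives
\begin{align*}
\frac{1}{nK}\E{\sum_{i\ne j}H(Z_i,Z_j)\bm 1\{\{Z_i,Z_j\}\in E\}}
=\frac{1}{K}\int \phi_n(z)\,\E{\sum_{y\in\mathcal P^{z}_{n\phi_n}\setminus\{z\}}H(z,y)\bm 1\{\{z,y\}\in E(\mathcal G_K(\mathcal P^{z}_{n\phi_n}))\}}\,\mathrm dz .
\end{align*}
So the task reduces to a pointwise limit of the inner normalized expectation. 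We then pass the limit inside the integral by dominated convergence. Since $|H|\le \|H\|_\infty$, the integrand is bounded by $\|H\|_\infty\phi_n(z)\E{d(z)}/K\le \|H\|_\infty B_d\phi_n(z)$, using the deterministic degree bound \eqref{eq:vertexdegree}. Because $\phi_n\to\phi$ in $L^1$, generalized dominated convergence applies. It therefore suffices to prove, for a.e.\ $z$ with $\phi(z)>0$, that
\begin{align*}
\frac1K\E{\sum_{y}H(z,y)\bm 1\{\{z,y\}\in E\}}\to \gamma H(z,z).
\end{align*}
Points with $\phi(z)=0$ contribute nothing in the limit, because the bound above is $O(\phi_n(z))$.

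Next we write $H(z,y)=H(z,z)+(H(z,y)-H(z,z))$. The first term gives $H(z,z)\E{d(z)}/K$.

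\textbf{Fixed $K$.} Rescale around $z$ by $y\mapsto (n\phi(z))^{1/d}(y-z)$. At a Lebesgue point of $\phi$, the rescaled process converges locally to $\mathcal P_1$, and the $K$-NN neighbourhood of $z$ is stabilizing. Standard arguments (as in Penrose--Yukich or \cite{HenzePenrose1999}) then give $\E{d(z)}\to\E{d_{\mathcal G_K(\mathcal P_1^0)}(0)}=K\gamma$. Uniform integrability follows from the bound $d\le B_dK$.

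\textbf{Diverging $K$.} This case is exactly \cref{lem:degree}.

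The remaining task is to show that the second term, which is an error term, vanishes. Every neighbour $y$ of $z$ is either among the $K$ nearest neighbours of $z$, or has $z$ among its own $K$ nearest neighbours. In both cases $|y-z|$ is at most of order $(K/(n\phi(z)))^{1/d}$ with high probability. For the second case we use the geometric fact that such $y$ lie within a bounded multiple of $R_K(z)$, the distance from $z$ to its $K$th nearest neighbour; this follows from the cone argument underlying \eqref{eq:vertexdegree}.

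Let $r_n=C(K/n)^{1/d}\to0$; this tends to zero since $K=o(n)$. The error is then bounded by
\begin{align*}
\frac1K\E{\sum_{y\in \mathcal P_{n\phi_n}\cap B(z,r_n)}|H(z,y)-H(z,z)|}+2\|H\|_\infty B_d\,\mathbb P(R_K(z)>r_n/C').
\end{align*}
By Mecke, the first term equals $\frac nK\int_{B(z,r_n)}\phi_n(y)|H(z,y)-H(z,z)|\,\mathrm dy$. Since $\frac nK\asymp |B(z,r_n)|^{-1}$, this is an averaged oscillation over a shrinking ball.

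\textbf{Main obstacle.} The Lebesgue point hypothesis is stated for $\phi(\cdot)H(z,\cdot)$, not for $H(z,\cdot)$ itself. We handle this by writing
\begin{align*}
\phi_n(y)|H(z,y)-H(z,z)|\le |\phi(y)H(z,y)-\phi(z)H(z,z)|+\|H\|_\infty|\phi(y)-\phi(z)|+\|H\|_\infty|\phi_n-\phi|(y).
\end{align*}
The first two pieces have vanishing averages because $z$ is a Lebesgue point of $\phi H(z,\cdot)$ and of $\phi$. The third piece is controlled after integrating in $z$: we use $\lambda_n\to\lambda$, which gives $|\phi_n-\phi|\le|\lambda_n-\lambda|(f+g)$, together with the Hardy--Littlewood maximal function (or Fubini) to bound its average uniformly.

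The tail probability term tends to zero by a Poisson concentration bound. The number of points in $B(z,r)$ is Poisson with mean about $n\phi(z)|B(z,r)|$, which is much larger than $K$ for a suitable constant $C$ when $\phi(z)>0$. For fixed $K$ we instead take $C$ large but fixed and let $C\to\infty$ at the end.
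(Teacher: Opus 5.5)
Apart from one step, your plan is the paper's proof. The shared steps are: the Mecke/Palm formula, dominated convergence, splitting off $H(z,z)$ times the expected degree, localizing the neighbours of $z$ to a ball of radius $\asymp (K/n)^{1/d}$, a Lebesgue-point estimate for the oscillation term, and then stabilization for fixed $K$ and \cref{lem:degree} for diverging $K$. Your three-term decomposition is exactly what is needed to use the hypothesis on $\phi(\cdot)H(z,\cdot)$. The $|\phi_n-\phi|$ piece is disposed of most simply by $\phi_n\le 2\phi$ for large $n$, which is what the paper does.

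The step that fails is the localization of neighbours $y$ that have selected $z$. You claim as a geometric fact that such $y$ satisfy $|y-z|\le C\,R_K(z)$, and this is false. In $d=1$, put $z=0$, $K$ points in $(0,\varepsilon)$, and a single point $y=-1$ with nothing to its left. Then $z$ is the nearest neighbour of $y$, so $\{z,y\}$ is an edge, but $|y-z|=1$ while $R_K(z)<\varepsilon$. The cone argument behind \eqref{eq:vertexdegree} bounds how many points can select $z$, not how far away they can be. So the bad event is not contained in $\{R_K(z)>r_n/C'\}$, and a Poisson lower bound on the total count in $B(z,r_n)$ does not control it.

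The repair is the cone version of your bound, as in the paper. Cover the directions at $z$ by finitely many cones $\mathcal C_j$ of angular diameter $<\pi/3$. Let $\mathcal E_n$ be the event that every $\mathcal C_j\cap B(z,r_n)$ contains at least $K$ points of $\mathcal P_{n\phi_n}$.

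If $|y-z|>r_n$ and $y\in\mathcal C_j$, each such point $w$ satisfies $|y-w|^2<|y-z|^2-|w-z|\,(|y-z|-|w-z|)<|y-z|^2$. So $y$ has $K$ points strictly closer than $z$ and does not select $z$. The $K$ nearest neighbours of $z$ also lie in $B(z,r_n)$. Hence on $\mathcal E_n$ all undirected neighbours of $z$ lie in $B(z,r_n)$.

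Take $n\phi_n(z)|B(z,r_n)|=AK$. At a common Lebesgue point of $f$ and $g$, each cone count is Poisson with mean $(c_jA+o(1))K$. Chebyshev and a union bound then give $\mathbb P(\mathcal E_n^c)\le C_d/(AK)$. For fixed $K$ this becomes small only as $A\to\infty$ after $n\to\infty$, matching your final remark. With $\mathbb P(R_K(z)>r_n/C')$ replaced by $\mathbb P(\mathcal E_n^c)$, the rest of your argument goes through.
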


\begin{proof} 
By the Palm theory for Poisson processes \cite[Theorem 1.6]{geometricgraph}, 
\begin{equation*}
\E{\sum_{1\le i\ne j\le L_n} H(Z_i,Z_j) \mathbf 1 \{\{Z_i,Z_j\}\in E(\mathcal G_K(\mathcal Z_n)) \} } = n\int_{\mathbb R^d}w_n(z)\phi_n(z) \mathrm{d} z , 
\label{eq:poissonexpectation}
\end{equation*}
where, for $z\in\mathbb R^d$, 
$$w_n(z) := \E{ \sum_{z'\in\mathcal P_{n\phi_n}} H(z,z') \mathbf 1 \{\{z,z'\}\in
E(\mathcal G_K(\mathcal P_{n\phi_n}^{z})) \}}. $$
Note that the geometric degree bound gives $|w_n(z)|\le C_dK\|H\|_\infty$. Also, for all sufficiently large $n$, $\phi_n\le C_\lambda\phi$. Therefore, by dominated convergence, it suffices to prove
\begin{equation}
\frac{w_n(z)}{K} \rightarrow \gamma H(z,z) , 
\label{eq:Kh}
\end{equation}  
for almost every $z \in \R^d$ with $\phi (z) > 0$. Fix a common Lebesgue point $z$ of $f$ and $g$ such that $\phi(z)>0$. Fix a sufficiently large constant $A$, and choose $r_n$ so that 
$$n\phi_n(z)|B(z,r_n)|=AK , $$ 
where $B(z,r_n)$ is the $d$-dimensional Euclidean ball with center at $z$ and radius $r_n$. Since $K=o(n)$, we have $r_n\to0$. Cover the directions from $z$
by a finite collection of cones $\mathscr C$ each of angular diameter strictly less than
$\frac{\pi}{3}$. Fix one cone $\mathcal C_j \in \mathscr C$. By invariance under dilation,
$$|\mathcal C_j \cap B(z,r)|=c_j |B(z,r)|,$$
where $c_j:=\frac{| \mathcal C_j \cap B(z,1)|}{|B(z,1)|}>0$. Define 
$$N_j := | \mathcal P_{n\phi_n} \cap \mathcal C_j \cap B(z,r_n)| \sim \mathrm{Poi}\left( n\int_{\mathcal C_j \cap B(z,r_n)}\phi_n(y) \mathrm{d} y \right).$$ Hence, by the Lebesgue differentiation theorem, 
\begin{align*}
\left| \int_{\mathcal C_j \cap B(z,r_n)}  (\phi_n(y)-\phi_n(z) ) \mathrm{d} y \right| \le \int_{B(z,r_n)}
|\phi_n(y)-\phi_n(z)| \mathrm{d} y =o(r_n^d).
\end{align*}
Therefore, using the choice
$n\phi_n(z)|B(z,r_n)|=AK$, we obtain
\begin{align*}
\E{N_j} = n\int_{\mathcal C_j \cap B(z,r_n)}\phi_n(y) \mathrm{d} y
&=n\phi_n(z)| \mathcal C_j \cap B(z,r_n)|+n o(r_n^d)\\
&=c_j n\phi_n(z)|B(z,r_n)|+n o(r_n^d)\\
&=(c_jA+o(1))K  ,  
\end{align*}
using $\phi_n(z)\to\phi(z)>0$ and $nr_n^d=O(K)$, which justifies $n o(r_n^d)=o(K)$. Note that the constants $c_j$ may differ between cones. However, there are only finitely many cones, so the minimum of these constants is positive. Hence, by choosing $A$ sufficiently large, we can make $\E{N_j} \geq \frac{1}{2} c_j A K \geq 2K$, for all $\mathcal C_j \in \mathscr C$, for sufficiently large $n$. Hence, by Chebyshev's inequality and a union bound, for sufficiently large $n$, 
\begin{align}
\mathbb P( \exists ~ \mathcal C_j \in \mathscr C: N_j < K) \leq \sum_{\mathcal C_j \in \mathscr C} \mathbb P(N_j < K) \leq \sum_{\mathcal C_j \in \mathscr C} \frac{\E{N_j}}{(\E{N_j}-K)^2}  & \leq \sum_{\mathcal C_j \in \mathscr C} \frac{4}{\E{N_j}} \nonumber \\ 
 & \leq \frac{C_d}{A K }  ,  
\label{eq:C} 
\end{align}
with $C_d = 8 \sum_{\mathcal C_j \in \mathscr C} \frac{1}{c_j }$. Let $\mathcal E_n = \{N_j \geq K \text{ for all } \mathcal C_j \in \mathscr C\}$ be the event that every cone contains at least $K$ points in $B(z, r_n)$. Note that 
\begin{align*}
& \left| w_n(z) -H(z,z) \E{d_{\mathcal G_K(\mathcal P_{n\phi_n}^{z})}(z)} \right|  \nonumber \\  
& \le \E{\sum_{z'\in\mathcal P_{n\phi_n}} |H(z,z')-H(z,z)| \bm 1\{ \{z, z'\} \in E(\mathcal G_K (\mathcal P_{n \phi_n}^{z})) \}}  .  
\end{align*}
Note that on $\mathcal E_n$, every undirected neighbor of $z$ lies in $B(z,r_n)$. Also, on $\mathcal E_n^c$, each summand above is at most $2\|H\|_\infty$ and the degree bound gives at most $B_dK$ terms. Hence, 
\begin{align*}
& \left| w_n(z) -H(z,z) \E{d_{\mathcal G_K(\mathcal P_{n\phi_n}^{z})}(z)} \right|  \nonumber \\ 
&\le \E{ \sum_{z'\in\mathcal P_{n\phi_n}\cap B(z,r_n)} |H(z,z')-H(z,z)| } +  2B_dK\|H\|_\infty  \mathbb P(\mathcal E_n^c)  .  
\end{align*}
Dividing by $K$, we conclude that
\begin{align}
& \frac{1}{K} \left| w_n(z) -H(z,z)  \E{d_{\mathcal G_K(\mathcal P_{n\phi_n}^{z})}(z)} \right|  \nonumber  \\  
&  \le \frac{n}{K} \int_{B(z,r_n)} |H(z,y)-H(z,z)|\phi_n(y) \mathrm{d} y + 2C_d\|H\|_\infty \mathbb P(\mathcal E_n^c).
\label{eq:Hdegree}
\end{align}
Using $\phi_n\le C_\lambda\phi$ and the Lebesgue differentiation theorem 
$$\int_{B(z,r_n)} |H(z,y)-H(z,z)|\phi_n(y) \mathrm{d} y = o(r_n^d).$$
Hence, for fixed $A$, the first term in \eqref{eq:Hdegree} goes to zero as $n \rightarrow \infty$, since $n o(r_n^d)=o(K)$. The second term in \eqref{eq:Hdegree} also goes to zero as $n\to\infty$ followed by $A \rightarrow \infty$, by using \eqref{eq:C}. Hence, 
\begin{equation*}
\frac{1}{K} w_n(z) - H(z,z)  \frac{1}{K} \E{d_{\mathcal G_K(\mathcal P_{n\phi_n}^{z})}(z)} \rightarrow 0.
\end{equation*}
Now, for $K$ fixed the result in \eqref{eq:Kh} follows by invoking standard results of stabilization of the $K$-NN graph (see, for example, \cite[Lemma 3.2]{geometricprobability}). For $K \rightarrow \infty$, the result in \eqref{eq:Kh} follows by invoking \cref{lem:degree}. This completes the proof of \cref{lem:expectationh}. 
\end{proof}

By \eqref{eq:ETnF} and \cref{lem:expectationh} with $H=h$ as in \eqref{eq:h}, 
\begin{align}\label{eq:ETnfg}
\frac{1}{nK} \E{T^{\times}(\mathcal G_K(\mathcal Z_n)) } & \rightarrow \gamma \int \frac{ \lambda (1-\lambda) f(x)g(x)}{\lambda f(x)+ (1- \lambda) g(x) } \mathrm d x .
\end{align}
The variance bound in the next lemma shows that this convergence holds in $L^2$.

\begin{lemma}\label{lem:varianceU}
Suppose either $K$ is fixed or $K=K_n \to \infty$. Then 
$$\Var{T^\times(\mathcal G_K(\mathcal Z_n))} \le C_d nK^2 ,$$ 
for some constant $C_d >0$, depending on $d$.   
\end{lemma}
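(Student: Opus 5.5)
We will prove the bound with the law of total variance, conditioning on $\mathscr F=\sigma(\mathcal Z_n)$:
$$\Var{T^\times(\mathcal G_K(\mathcal Z_n))}=\E{\Var{T^\times\mid\mathscr F}}+\Var{\E{T^\times\mid\mathscr F}}.$$

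\textbf{Conditional variance term.} Conditionally on $\mathscr F$, the labels are independent, but now with point-dependent probabilities $\pi(z)=\frac{n_1f(z)}{n_1f(z)+n_2g(z)}$ (we are not under $H_0$). The edge indicators $\bm 1\{c_x\ne c_y\}$ have conditional covariance zero unless the edges share a vertex, and each covariance is at most $1$ in absolute value. Hence
$$\Var{T^\times\mid\mathscr F}\le \NHK{\Ktwo}+2\NHK{\Pthree}\le \sum_{x\in\mathcal Z_n}d_x^2\le B_d^2L_n\tilde K^2,$$
using the degree bound \eqref{eq:vertexdegree}. Taking expectations and using $\E{L_n}=n$ and $\tilde K\le K$ gives $O(nK^2)$.

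\textbf{Conditional mean term.} By \eqref{eq:ETn}, $\E{T^\times\mid\mathscr F}=F(\mathcal Z_n)$, where
$$F(\mathcal Z)=\sum_{i\ne j}h_n(Z_i,Z_j)\bm 1\{\{Z_i,Z_j\}\in E(\mathcal G_K(\mathcal Z))\},$$
a functional of the Poisson process with $0\le h_n\le 1$. We will use the Poincaré inequality for Poisson functionals (Last--Penrose):
$$\Var{F}\le \E{\int (D_zF)^2\, n\phi_n(z)\,\mathrm dz},$$
where $D_zF=F(\mathcal Z_n\cup\{z\})-F(\mathcal Z_n)$. Adding a point $z$ changes the edge set in three ways.
\begin{itemize}
\item It creates the edges at $z$, of which there are at most $B_d\tilde K$.
\item It deletes edges $\{x,y\}$ in which $z$ displaces $y$ from the $K$-NN list of $x$. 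This requires $z$ to be a new $K$-nearest neighbour of $x$, i.e.\ $x$ is a neighbour of $z$ in the augmented graph, so there are at most $B_d\tilde K$ such $x$.
\item Each such $x$ loses at most one out-neighbour, so the number of deleted edges is at most $B_d(K+1)$.
\end{itemize}
Therefore $|D_zF|\le CK$ deterministically. Plugging this into the Poincaré inequality gives
$$\Var{F}\le C^2K^2\int n\phi_n=C^2nK^2.$$

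Combining the two terms yields $\Var{T^\times}\le C_dnK^2$, both for fixed $K$ and for $K_n\to\infty$. The edge-case convention $|S|\le K$ (complete graph) is harmless, since degrees are then at most $L_n\le K$ and the same bounds hold.

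\textbf{Main obstacle.} The delicate step is the deterministic bound $|D_zF|=O(K)$ for the undirected $K$-NN graph. One must check that the set of vertices whose neighbour lists change when $z$ is inserted has size $O(K)$; this follows from the geometric degree bound applied to $z$ in $\mathcal Z_n\cup\{z\}$. If citing the Poisson Poincaré inequality were undesirable, an alternative is the Efron--Stein inequality conditional on $L_n$, together with the same add-one cost bound and $\Var{L_n}=n$; the $O(K)$ add-one cost remains the key input.
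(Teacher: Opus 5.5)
Your proof is correct, but it organizes the argument differently from the paper. The paper never separates labels from locations.
\begin{itemize}
\item It conditions on $L_n=\ell$ and applies the Efron--Stein inequality to the $\ell$ i.i.d.\ marked observations (location together with label), using that replacing one marked point changes the cross-count by at most $4B_dK$.
\item It then handles the randomness of $L_n$ separately: $\ell\mapsto \E{T^\times\mid L_n=\ell}$ is $2B_dK$-Lipschitz, and $\Var{L_n}=n$.
\end{itemize}
This is essentially the ``alternative'' you mention at the end.

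You instead condition on the full location process $\mathscr F$.
\begin{itemize}
\item You handle the label randomness through the conditional covariance structure. Only pairs of edges sharing a vertex contribute, giving the bound $\NHK{\Ktwo}+2\NHK{\Pthree}\le\sum_x d_x^2$. This remains valid with location-dependent label probabilities, as is needed when the lemma is used under $H_1$.
\item You absorb the location randomness, including the Poisson number of points, in one step. The Poisson Poincar\'e inequality applied to the conditional mean does this, since its add-one cost is $O(K)$ by the same Stone-type degree bound the paper uses.
\end{itemize}

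The paper's route is more elementary: it needs only Efron--Stein for independent inputs, at the price of the extra Lipschitz-in-$\ell$ step. Yours trades that step for a heavier tool (the Last--Penrose/Wu Poincar\'e inequality). It also makes explicit that the label fluctuation is governed by $\sum_x d_x^2$, i.e.\ by the same motif counts that appear in \eqref{eq:varianceT}.

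Both arguments give $O(nK^2)$ with a constant depending only on $d$. Your deleted-edge count $B_d(K+1)$ can be sharpened to $B_d\tilde K$, but this is immaterial.
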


\begin{proof} 
Let $B_dK$ be the geometric upper bound on every vertex degree as in \eqref{eq:vertexdegree}. Note that adding one marked point introduces at most $B_dK$ edges incident to the new point. Further, an existing vertex changes its nearest-neighbor list only if it selects the new point, and each
such vertex loses at most one previous neighbor. There are at most $B_dK$ such vertices, so at most $B_dK$ old edges disappear. Hence, adding or deleting one marked point changes the cross-edge count by at most $2B_dK$. Consequently, replacing one marked observation, including its location and label, changes the cross-edge count by at most $4B_dK$.

Condition on $L_n=\ell$, denote $T=T^{\times}(\mathcal G_K(\mathcal Z_n))$ and  $T^{(i)}=T^{\times}(\mathcal G_K(\mathcal Z_n^{(i)}))$, where $\mathcal Z_n^{(i)}$ is obtained from 
$\mathcal Z_n$ by replacing the $i$-th marked observation with an independent copy and rebuilding the graph. Then by the Efron--Stein inequality \cite{es}, 
\begin{align}\label{eq:varTn}
\Var{ T | L_n=\ell } \le \frac{1}{2} \sum_{i=1}^{\ell} \E{(T-T^{(i)})^2 | L_n=\ell }  .  
\end{align}
Replacing one marked observation can be performed by first deleting the original point and then inserting its replacement. Each operation changes the cross-edge count by at most $2B_dK$, as discussed above. Hence, by triangle inequality, $|T-T^{(i)}|\le 4B_dK$. Therefore, 
$$\E{(T-T^{(i)})^2 | L_n=\ell} \le (4B_dK)^2.$$
Applying this bound in \eqref{eq:varTn} gives, 
\begin{align*}
\Var{T^\times(\mathcal G_K(\mathcal Z_n)) | L_n=\ell} &\le \frac{1}{2} \sum_{i=1}^{\ell}(4B_dK)^2 =8B_d^2\ell K^2.
\end{align*}
Since $\mathbb E L_n=n$, it follows that
\begin{equation}
\E{\Var{T^\times(\mathcal G_K(\mathcal Z_n))\mid L_n}} \le 8B_d^2nK^2.
\label{eq:KNNvariancemean}
\end{equation}

To control the variance of the conditional mean, define
$$a_\ell:= \E{T^\times(\mathcal G_K(\mathcal Z_n))\mid L_n=\ell}  ,  $$
for $\ell\ge0$. By the same argument as above, $|a_{\ell+1}-a_\ell|\le 2B_dK$.
Therefore, $|a_{\ell} - a_{\ell'}|\le 2B_dK|\ell-\ell'|$, for $\ell, \ell' \ge0$. 
Let $L_n'$ be an independent copy of $L_n$. Since $L_n\sim\operatorname{Poi}(n)$,
\begin{align}
\Var{a_{L_n}} =\frac{1}{2} \E{(a_{L_n}-a_{L_n'})^2} \le \frac{(2B_dK)^2}{2} \E{(L_n-L_n')^2} 
&=4B_d^2K^2\Var{L_n}  \nonumber \\  
& =4B_d^2nK^2.
\label{eq:KNNmeanvariance}
\end{align}
Combining \eqref{eq:KNNvariancemean} and \eqref{eq:KNNmeanvariance}, the result in \cref{lem:varianceU} follows.  
\end{proof}

Combining \eqref{eq:ETnfg} with \cref{lem:varianceU} gives, 
\begin{align*}
\frac{1}{nK} T^{\times}(\mathcal G_K(\mathcal Z_n)) & \stackrel{P} \rightarrow \gamma \int \frac{ \lambda (1-\lambda) f(x)g(x)}{ \lambda f(x)+ (1- \lambda) g(x) } \mathrm d x .
\end{align*}
Also, applying \cref{lem:expectationh} with $H=1$ and the same variance argument as in \cref{lem:varianceU} gives, 
$$\frac{\NHK{\Ktwo}}{nK} \stackrel{P} \rightarrow \frac{\gamma}{2}.$$
Combining the above proves the result in \eqref{eq:Tfg}.

Finally, the fact that $\delta_\lambda(f, g) \leq \delta_\lambda(f, f)= 2 \lambda (1-\lambda)$ and equality holds if and only if $f=g$, follows from \cite[Theorem 1 and Corollary 1]{GyorfiNemetz1977}. This completes the proof of \cref{ppn:KNNconsistency}.  \hfill  $\Box$

\subsection{Proof of \cref{lem:degree}}
\label{sec:nearestneigborpf}

The lower bound in \eqref{eq:degreeG} is immediate: Since the degree of any vertex is at least the outgoing degree, 
$$\E{d_{\mathcal G_K(\mathcal P_{n\phi_n}^{z})}(z)} \geq \E{\min\{K, |\mathcal P_{n\phi_n}| \}}  = K-o(K),$$ 
since $|\mathcal P_{n\phi_n}|\sim\operatorname{Pois}(n)$ and $K=o(n)$.  

For the upper bound, fix $\varepsilon>0$, and now choose $r_n$ such that 
\begin{align}\label{eq:degreeradius}
n\phi_n(z)|B(z,r_n)|=(1+\varepsilon)K.
\end{align}
 Choose $\delta>0$ sufficiently small that $(1+\varepsilon)(1-2\delta)^d > 1+ \frac{\varepsilon}{2}$, and choose a finite $\delta$-net $\{u_s\}_{s\in \mathcal S}$ of the unit sphere. Fix a common Lebesgue point $z$ of $f$ and $g$ (so that $\int_{B(z,r)}|\phi_n(y)-\phi_n(z)|\,\mathrm dy=o(r^d)$ uniformly in $n$) with $\phi(z) > 0$ and define
$$N_s:=|B (z+r_nu_s, (1-2\delta)r_n ) \cap \mathcal P_{n\phi_n}^{z} |, $$
the number of points of the Poisson process $\mathcal P_{n\phi_n}^{z}$ in the ball $B (z+r_nu_s, (1-2\delta)r_n )$. (Note that the inserted point $z$ lies outside this ball $B (z+r_nu_s, (1-2\delta)r_n )$). By the Lebesgue differentiation theorem and \eqref{eq:degreeradius},
\begin{align*}
\E{N_s} = n \int_{B (z+r_nu_s,(1-2\delta)r_n )} \phi_n(y) \mathrm d y =  ((1+\varepsilon)(1-2\delta)^d+o(1) )K \geq   \left( 1+ \frac{\varepsilon}{2}  \right) K,  
\end{align*}
for sufficiently large $n$ and all $s \in \mathcal S$. Then Chebyshev's
inequality gives
\begin{align*}
\mathbb P(N_s<K+1) =\mathbb P(N_s\le K) \le \mathbb P (|N_s- \E{N_s} |\ge \E{N_s} -K ) &\le \frac{\E{N_s}}{(\E{N_s}-K)^2}  \nonumber  \\  
&\le \frac{4\E{N_s}}{\varepsilon^2K^2} = O\left(  \frac{1}{K}  \right)  .  
\end{align*}
Also, define $N_0:=|B(z,r_n)\cap\mathcal P_{n\phi_n}|$. Note that the inserted point $z$ is excluded from this count. Applying Lebesgue differentiation theorem and \eqref{eq:degreeradius} again gives
\begin{align}\label{eq:ENK}
\mathbb E N_0 =n\int_{B(z,r_n)}\phi_n(y)  \mathrm{d} y =(1+\varepsilon+o(1))K. 
\end{align}
Again, by Chebyshev's inequality,  $\mathbb P(N_0<K) \le \frac{\mathbb E N_0}{(\mathbb E N_0-K)^2} = O\left(  \frac{1}{K}  \right) $. 
Define 
$$\mathcal E_n:= \{N_0 \geq K \} \bigcap \left( \bigcap_{s\in\mathcal S}\{N_s \geq K+1\} \right)  .  $$
Since $\mathcal S$ is finite and $K = K_n \to\infty$, a union bound gives, 
\begin{align*}
\mathbb P\left( \mathcal E_n^c \right) \le \mathbb P(N_0<K)  
+\sum_{s\in\mathcal S}\mathbb P(N_s<K+1) = O\left(  \frac{1}{K}  \right) \rightarrow 0.
\end{align*}  
Note that on the event $\mathcal E_n$ every undirected neighbor of $z$ lies in $B(z, r_n)$, hence the degree in this case is at most $N_0$. Then using the degree bound \eqref{eq:vertexdegree} on the event $\mathcal E_n^c$ gives, 
\begin{align*}
\E{ d_{\mathcal G_K(\mathcal P_{n\phi_n}^{z})} (z) } \leq \E{N_0} + B_d K \mathbb P(\mathcal E_n^c) \le n\int_{B(z,r_n)}\phi_n(y) \mathrm{d} y+O(1) =(1+\varepsilon+o(1))K  ,  
\end{align*}
where the last step uses \eqref{eq:ENK}. Then dividing both sides by $K$ and taking limits, the result follows.  \hfill $\Box$

\subsection{Proof of \cref{cor:levelconsistency} }
\label{sec:levelconsistencypf}

The result in (1) is immediate from \cref{thm:cltH0}. 

For the consistency result, we first show that
\begin{equation}
\frac{\sigma_{\mathscr F}}{\NHK{\Ktwo}} = O_{P}\left(\frac{1}{\sqrt{n}} \right).
\label{eq:meanvariance}
\end{equation}
For this, recalling \eqref{eq:varianceT} and denoting $d_z=d_{\mathcal G_K(\mathcal Z_n)}(z)$ gives, 
\begin{align*}
\sigma_{\mathscr F}^2 \le \frac{1}{4}\left(\NHK{\Ktwo}+\sum_{z\in\mathcal Z_n}\binom{d_z}{2} \right) =\frac18\sum_{z\in\mathcal Z_n}d_z^2.
\end{align*}
Let $\tilde K=\min\{K,\max\{L_n-1,0\}\}$. Then by \cref{lem:NHK}, 
$$\frac{\sigma_{\mathscr F}^2}{\NHK{\Ktwo}^2}  \le  \frac{B_d^2}{2L_n} = O_{P}\left(\frac{1}{n} \right)  ,  $$
since $\frac{L_n}{n}  \to  1$ in probability. This proves \eqref{eq:meanvariance}.  Now fix an alternative with $f$ and $g$ differing on a set of
positive Lebesgue measure. By \cref{ppn:KNNconsistency}, 
$$\frac{T^\times(\mathcal G_K(\mathcal Z_n))}{\NHK{\Ktwo}} \xrightarrow{P}
\delta_\lambda(f,g) < 2\lambda(1-\lambda).$$
Also, $\frac{\mu_{\mathscr F}^{\times}}{\NHK{\Ktwo}} \rightarrow 2\lambda(1-\lambda)$. Combining the above with \eqref{eq:meanvariance} gives, 
\begin{align*}
\frac{\mu_{\mathscr F}^{\times}-z_\alpha\sigma_{\mathscr F}-T^\times(\mathcal G_K(\mathcal Z_n))}{\NHK{\Ktwo}} \xrightarrow{P} 2\lambda(1-\lambda)-\delta_\lambda(f,g)>0.
\end{align*}
Therefore,
$$\mathbb P_{H_1}(\psi_n=1) = \mathbb P_{H_1} \left(T^\times(\mathcal G_K(\mathcal Z_n)) < \mu_{\mathscr F}^{\times}-z_\alpha\sigma_{\mathscr F}
\right) \rightarrow 1.$$
This proves the result in (2).  \hfill  $\Box$

\bigskip

\noindent \textsc{Siva Athreya}, International Centre for Theoretical Sciences, Tata Institute of Fundamental Research, Survey No.~151, Shivakote,
Hesaraghatta Hobli, Bengaluru--560089, India. \\  
\texttt{athreya@icts.res.in}\\[0.4em]

\noindent\textsc{Bhaswar B.~Bhattacharya}, Department of Statistics and Data Science, The Wharton School, University of Pennsylvania, Academic Research Building, 265 South 37th Street, 3rd and 4th Floors, Philadelphia, PA 19104--1686, USA, and Department of Statistics and Data Science, Faculty of Science,
National University of Singapore, Block S16, Level 7, 6 Science Drive 2,
Singapore 117546.  \\  
\texttt{bhaswar.bhattacharya@gmail.com}\\[0.4em]

\noindent\textsc{Yogesh Dahiya}, Department of Mathematical Sciences, Indian Institute of Science Education and Research Mohali, Knowledge City, Sector 81, SAS Nagar, Manauli PO, Punjab 140306, India.  \\  
\texttt{ph20002@iisermohali.ac.in}\\[0.4em]

\noindent\textsc{Karambir Das}, Department of Mathematics,
Indian Institute of Science, Bengaluru 560012, India.  \\  
\texttt{karambirdas@iisc.ac.in}\\[0.4em]

\noindent\textsc{Spandan Ghoshal}, Indian Statistical Institute,
203 B. T. Road, Kolkata 700108, India. \\ 
\texttt{spandan.ghoshal\_r@isical.ac.in}\\[0.4em]

\noindent\textsc{Sourish Maniyar}, Department of Mathematics,
Indian Institute of Science, Bengaluru 560012, India.\\
\texttt{sourishparag@iisc.ac.in}\\[0.4em]

\noindent\textsc{Elizabeth Sara Roy}, International Centre for Theoretical Sciences, Tata Institute of Fundamental Research, Survey No.~151, Shivakote,
Hesaraghatta Hobli, Bengaluru--560089, India.  \\  
\texttt{elizabeth.sararoy@icts.res.in}

\end{document}